 \newtheorem{thm}{Theorem}[section]
 \newtheorem{abcdef}{Theorem}[section]
 \newtheorem{lem}[thm]{Lemma}
 \newtheorem{abcd}{Theorem}[section]
 \theoremstyle{remark}
 \newtheorem{remark}[abcd]{Remark}
 \theoremstyle{definition}
 \newtheorem{definition}[abcdef]{Definition}
 \numberwithin{equation}{section}
\begin{document}

\title[Extension of cohomology classes]
{Extension of cohomology classes and holomorphic sections defined on subvarieties}

\author{Xiangyu Zhou, Langfeng Zhu}

\address{Xiangyu Zhou: Institute of Mathematics, AMSS, and Hua Loo-Keng Key Laboratory of Mathematics,
Chinese Academy of Sciences, Beijing 100190, China; School of
Mathematical Sciences, University of Chinese Academy of Sciences,
Beijing 100049, China}

\email{xyzhou@math.ac.cn}

\address{Langfeng Zhu: School of Mathematics and Statistics, Wuhan University, Wuhan 430072, China}

\email{zhulangfeng@amss.ac.cn}

\thanks{Xiangyu Zhou was partially supported by the National Natural Science Foundation of China (No. 11688101
and No. 11431013). Langfeng Zhu was partially supported by the National Natural Science Foundation
of China (No. 11201347 and No. 11671306).}

\subjclass[2010]{32D15, 32L10, 32U05, 32J25, 32Q15, 14F18}

\keywords{extension theorem, cohomology group, plurisubharmonic function,
multiplier ideal sheaf, K\"{a}hler manifold, optimal $L^2$ extension}


\begin{abstract}
In this paper, we obtain two extension theorems for cohomology classes and holomorphic sections defined on analytic subvarieties, which are defined as the supports of
the quotient sheaves of multiplier ideal sheaves of quasi-plurisubharmonic functions with arbitrary singularities. The first result gives a positive answer to a question posed
by Cao-Demailly-Matsumura, and unifies a few well-known injectivity theorems. The second result generalizes and optimizes a general $L^2$ extension theorem obtained by Demailly.
\end{abstract}

\maketitle


\section{Introduction and main results}\label{section-introduction}

Let $(X,\mathcal{O}_X)$ be a complex manifold, $\mathcal{J}\subset \mathcal{O}_X$ be a coherent ideal sheaf, and $L$ be a holomorphic line bundle on $X$. Let $Y:=V(\mathcal{J})$ be the zero variety of $\mathcal{J}$ endowed with the structure sheaf $\mathcal{O}_Y:=(\mathcal{O}_X/\mathcal{J})\big|_{Y}$. Then $(Y,\mathcal{O}_Y)$ may be non reduced.

The extension problem for cohomology classes defined on $(Y,\mathcal{O}_Y)$ is to find appropriate conditions on $X$, $\mathcal{J}$ and $L$ such that the natural homomorphism
\[H^q\big(X,\mathcal{O}_X(K_X\otimes L)\big)\longrightarrow H^q\big(Y,\mathcal{O}_Y(K_X\otimes L)\big)=H^q\big(X,\mathcal{O}_X(K_X\otimes L)\otimes\mathcal{O}_X/\mathcal{J}\big)\]
is surjective, where $q$ is a nonnegative integer. The surjectivity is equivalent to the injectivity of the homomorphism
\[H^{q+1}\big(X,\mathcal{O}_X(K_X\otimes L)\otimes\mathcal{J}\big)\longrightarrow H^{q+1}\big(X,\mathcal{O}_X(K_X\otimes L)\big).\]

This extension problem is related to vanishing theorems and injectivity theorems. If $X$ is Stein, this problem is solved by Cartan's Theorem B. For other cases, there have been a great number of important works related to this problem, and various advanced techniques have been developed.

In this paper, we present and develop some new idea and technique based on existed advanced techniques to obtain new results on this problem. There are two main points which seem to be different from the previous related works. One is that we introduce an idea to approximate two weight functions simultaneously (see Lemma \ref{l:regularization-two-quasipsh}, Subsection \ref{subsection 2} and Subsection \ref{jet-subsection 2}). Another one is that we take an idea such that the limit process for weight functions is done prior to other limit processes after solving $\bar\partial$-equations (see Subsection \ref{subsection 4} and Subsection \ref{jet-subsection 4}).\\

The first result in this paper (Theorem \ref{t:Zhou-Zhu-nonreduced-quasipsh}) is an extension theorem for cohomology classes defined on a not necessarily reduced analytic subvariety, which is defined as the support of the quotient sheaf of multiplier ideal sheaves
of quasi-plurisubharmonic functions with arbitrary singularities. This result gives a positive answer to a question posed by Cao-Demailly-Matsumura in \cite{Cao-Demailly-Matsumura2017} (see Remark \ref{r:C-D-M question} below), and also unifies a few well-known injectivity theorems (see Remark \ref{r:corollaries-thm-1} below).\\

Let us first recall some definitions (see \cite{Nadel1990}, \cite{Demailly92a}, \cite{Demailly2001}, \cite{Siu2005}, \cite{Demailly-book2010}, \cite{Ohsawa-book2015}, \cite{Demailly2015}, \cite{Cao-Demailly-Matsumura2017}, \cite{Demailly2017}, etc).

Let $X$ be a complex manifold. A function $\varphi:\,X\longrightarrow[-\infty,+\infty)$ on $X$ is said to be \textbf{quasi-plurisubharmonic} (quasi-psh) if $\varphi$ is locally the sum of a plurisubharmonic function and a smooth function.

If $\varphi$ is a quasi-psh function on $X$, the \textbf{multiplier
ideal sheaf} $\mathcal{I}(e^{-\varphi})$ is the ideal subsheaf of $\mathcal{O}_X$ defined by
\[\mathcal{I}(e^{-\varphi})_x=\{f\in \mathcal{O}_{X,x}:\,\exists\, U\ni x\text{ such that }\int_U|f|^2e^{-\varphi}d\lambda<+\infty\},\]
where $U$ is an open coordinate neighborhood of $x$, and $d\lambda$ is the Lebesgue measure with respect to the coordinates on $U$. It is well known that $\mathcal{I}(e^{-\varphi})$ is coherent.

Let $L$ be a holomorphic line bundle over $X$. A \textbf{singular Hermitian metric} $h$ on $L$ is simply a Hermitian metric which can be expressed locally as $e^{-\varphi_U}$ on $U$ such that $\varphi_U$ is quasi-psh, where $U\subset X$ is a local coordinate chart such that $L|_U\simeq U\times\mathbb{C}$.
It has a well-defined curvature current $\sqrt{-1}\Theta_{L,h}:=\sqrt{-1}\partial\bar\partial\varphi_U$ on $X$.

In the definition of a singular Hermitian metric, $\varphi_U$ is required to be quasi-psh here, while $\varphi_U$ is only required to be in $L^1_{\mathrm{loc}}$ in \cite{Demailly92a} and \cite{Demailly-book2010}.\\

Our first result is the following extension theorem, which was announced in \cite{Zhou-Zhu2018a}.

\begin{thm}\label{t:Zhou-Zhu-nonreduced-quasipsh}
Let $X$ be a holomorphically convex complex $n$-dimensional
manifold possessing a K\"{a}hler metric $\omega$, $\psi$ be an $L^1_{\mathrm{loc}}$ function on $X$ which is locally bounded above, and $(L,h)$ be a holomorphic line bundle over $X$ equipped with a
singular Hermitian metric $h$. Assume that $\alpha>0$ is a positive continuous function
 on $X$, and that the following two inequalities hold on $X$ in the sense of currents:
\\$(i)$\quad$\sqrt{-1}\Theta_{L,h}+\sqrt{-1}\partial\bar\partial\psi\geq0$,
\\$(ii)$\quad$\sqrt{-1}\Theta_{L,h}+(1+\alpha)\sqrt{-1}\partial\bar\partial\psi\geq0$.
\\Then the homomorphism induced by the natural inclusion $\mathcal{I}(he^{-\psi})\longrightarrow\mathcal{I}(h)$,
\[H^q\big(X,\mathcal{O}_X(K_X\otimes L)\otimes\mathcal{I}(he^{-\psi})\big)\longrightarrow H^q\big(X,\mathcal{O}_X(K_X\otimes L)\otimes \mathcal{I}(h)\big)\]
is injective for every $q\geq0$. In other words, the homomorphism induced by the natural sheaf surjection $\mathcal{I}(h)\longrightarrow\mathcal{I}(h)/\mathcal{I}(he^{-\psi})$,
\begin{equation*}
H^q\big(X,\mathcal{O}_X(K_X\otimes L)\otimes\mathcal{I}(h)\big)\longrightarrow H^q\big(X,\mathcal{O}_X(K_X\otimes L)\otimes\mathcal{I}(h)/\mathcal{I}(he^{-\psi})\big)
\end{equation*}
is surjective for every $q\geq0$.
\end{thm}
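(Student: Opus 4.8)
The plan is to prove the injectivity statement directly, by a Dolbeault-cohomology and $L^2$-duality argument in the spirit of Enoki, Ohsawa, Matsumura, and Cao-Demailly-Matsumura, but incorporating the two new ideas advertised in the introduction: simultaneous regularization of the two weights involved, and performing the weight limit before the other limits. Since $X$ is holomorphically convex, it admits a proper holomorphic map to a Stein space, and by standard devissage (a Leray-type argument, pushing forward to the Stein base where higher cohomology of coherent sheaves vanishes) it suffices to treat the case where $X$ is (weakly) pseudoconvex, indeed one can exhaust $X$ by relatively compact pseudoconvex open sets $X_c = \{\phi < c\}$ for a smooth psh exhaustion $\phi$, prove the statement there with uniform estimates, and pass to the limit. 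So I would first reduce to: $X$ pseudoconvex Kähler, and show that a class in $H^{q+1}$ with $\mathcal{I}(he^{-\psi})$-coefficients mapping to zero in $H^{q+1}$ with $\mathcal{I}(h)$-coefficients is itself zero.

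Next I would represent cohomology classes by $L^2$ forms. Fix a complete Kähler metric $\widehat\omega$ on $X_c$ (obtained by adding $\epsilon\,\partial\bar\partial(-\log(c-\phi))$ or similar to $\omega$), so that Dolbeault cohomology with $L^2$ coefficients computes sheaf cohomology with the relevant multiplier ideal (here one uses that $\mathcal{I}(he^{-\psi})$ and $\mathcal{I}(h)$ are the multiplier sheaves attached to the weights $\varphi+\psi$ and $\varphi$, where $h=e^{-\varphi}$ locally). A class killed in $H^{q+1}(X,K_X\otimes L\otimes\mathcal{I}(h))$ is represented by a $\bar\partial$-closed $(n,q+1)$-form $u$ with $\int |u|^2 e^{-\varphi-\psi}<\infty$ such that $u=\bar\partial v$ for some $v$ with $\int |v|^2 e^{-\varphi}<\infty$. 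The goal is to produce $w$ with $\int|w|^2 e^{-\varphi-\psi}<\infty$ and $\bar\partial w = u$. The standard device is to regularize: replace $\psi$ by a decreasing sequence of smooth (or less singular) functions $\psi_\nu$, and $\varphi$ by $\varphi_\mu$, using Lemma \ref{l:regularization-two-quasipsh} to approximate the \emph{pair} $(\varphi,\varphi+\psi)$ simultaneously while keeping control of both curvature conditions $(i)$ and $(ii)$ — this simultaneous control is precisely what makes the twisted Bochner-Kodaira-Nakano inequality usable with the Donnelly-Fefferman / Berndtsson-type twist factor built from $\psi$ (e.g. weights of the form $e^{\eta\psi}$ with a cutoff function $\eta$ of $\psi$, à la McNeal-Varolin, Matsumura). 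For the twisted estimate one writes a weight $e^{-\varphi-\psi}\cdot(\text{twist})$ and uses that condition $(ii)$, i.e. the extra $\alpha\,\sqrt{-1}\partial\bar\partial\psi$ of positivity, absorbs the negative term coming from $\partial\bar\partial$ of the twist, exactly as $\alpha$ appears as a denominator in Donnelly-Fefferman-type inequalities.

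Then the core analytic step: solve the $\bar\partial$-equation $\bar\partial w_\nu = u$ on $X_c$ with the regularized twisted weight, obtaining via the twisted Bochner-Kodaira-Nakano-Hörmander machinery a solution with an estimate of the shape $\int |w_\nu|^2 e^{-\varphi-\psi_\nu} \lesssim \int \langle B^{-1}u,u\rangle$, where the right-hand side is controlled because $u$ is $\bar\partial$-exact with a controlled primitive $v$ and one only needs the component of $u$ that "costs" the positivity supplied by $(ii)$; here the fact that $u=\bar\partial v$ with $v\in L^2(e^{-\varphi})$ lets us bound the twisted norm of $u$ by that of $v$ after an integration by parts (the Cauchy-Schwarz / integration-by-parts trick of Enoki-Ohsawa for injectivity theorems). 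Now I would carry out the limits \emph{in the order} indicated: first let $\nu\to\infty$ (the weight limit, $\psi_\nu\downarrow\psi$) using weak compactness of $\{w_\nu\}$ in $L^2_{loc}$ and Fatou/monotone convergence to get $w_c$ with $\bar\partial w_c = u$ and $\int|w_c|^2 e^{-\varphi-\psi}<\infty$ on $X_c$ — doing this first is what avoids the usual loss in the estimates — and only afterwards let $c\to\infty$ (the exhaustion limit), again by a diagonal weak-limit argument, to obtain a global $w$ on $X$ with $\int_X |w|^2 e^{-\varphi-\psi}<\infty$ and $\bar\partial w=u$. This exhibits the class of $u$ as zero in $H^{q+1}(X,K_X\otimes L\otimes\mathcal{I}(he^{-\psi}))$, which is the desired injectivity.

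The main obstacle I expect is the bookkeeping in the simultaneous regularization: one needs smooth approximations $\varphi_\mu\downarrow\varphi$ and $\psi_\nu\downarrow\psi$ for which \emph{both} $\sqrt{-1}\Theta_{L}+\sqrt{-1}\partial\bar\partial(\varphi_\mu-\varphi)+\sqrt{-1}\partial\bar\partial\psi_\nu \geq -\lambda_\nu\,\omega$ and the $(1+\alpha)$-version hold with $\lambda_\nu\to 0$ uniformly on compacts, so that the error terms in the twisted Bochner inequality are genuinely negligible and do not interfere with the twist chosen from $\psi$; reconciling the two regularizations (they cannot in general be chosen to differ by a smooth function) while preserving both curvature hypotheses — and ensuring the multiplier ideals converge correctly, i.e. $\mathcal{I}(he^{-\psi_\nu})\uparrow$ stabilizes to $\mathcal{I}(he^{-\psi})$ locally by strong openness — is the delicate heart of the argument, and is exactly where Lemma \ref{l:regularization-two-quasipsh} is needed. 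A secondary subtlety is justifying that $L^2$ Dolbeault cohomology with the complete metric $\widehat\omega$ on $X_c$ really computes the sheaf cohomology with multiplier-ideal coefficients and that the exhaustion limit does not lose exactness, which is handled by the usual de Rham-Weil / Leray arguments together with the fact that on the holomorphically convex $X$ the relevant cohomology is finite-dimensional or at least behaves well under the inverse limit.
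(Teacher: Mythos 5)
Your proposal captures several of the paper's genuine ingredients — Remmert reduction to a Stein base, the role of Lemma~\ref{l:regularization-two-quasipsh} for regularizing the \emph{pair} of weights while retaining both curvature hypotheses, the twisted Bochner--Kodaira estimate in which condition $(ii)$ supplies the positivity needed for the twist built out of $\psi$, and the key point that the weight-regularization limit $\rho\to0$ is taken \emph{before} the truncation parameter $t\to-\infty$. However, there are two genuine gaps in the core analytic step.

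First, the curvature after regularization is only $\sqrt{-1}\Theta_{L,h_0}+\sqrt{-1}\partial\bar\partial\phi_\rho+\sqrt{-1}\partial\bar\partial\psi_\rho\geq-2\delta_\rho\omega$ (and similarly for the $(1+\alpha)$-version), i.e.\ it has a small \emph{negative} error. Consequently the twisted operator $\mathrm{B}_{t,\rho}$ is not nonnegative, only $\mathrm{B}_{t,\rho}+\delta\,\mathrm{I}\geq0$ (cf.\ \eqref{ie:non reduced B-estimate}), so you cannot ``solve $\bar\partial w_\nu=u$ directly with the twisted estimate.'' The paper instead invokes Lemma~\ref{l:Demailly-non complete metric} and solves the \emph{perturbed} equation $\bar\partial u_{t,\rho}+\sqrt{2}e^tv_{t,\rho}=g_{t,\rho}$, producing an approximate solution plus a correcting term $v_{t,\rho}$. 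Your sketch has no mechanism for eliminating this correcting term; a Fatou/weak-limit argument alone will not remove it.

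Second — and this is the step that is completely missing — the elimination of the error term after taking $\rho\to0$ and $t\to-\infty$ is \emph{topological}, not estimate-theoretic. Because $X$ is holomorphically convex, Prill's theorem (Lemma~\ref{l:Prill1971}) gives that $H^{q+1}(X_1,\mathcal{K}\otimes\mathcal{I}(h))$ is Hausdorff, so $\mathrm{Im}\,\bar\partial_q$ is closed; the open mapping theorem (Lemma~\ref{l:open mapping theorem}) then yields a topological isomorphism \eqref{e:isomorphism induced by dbar}, which converts the fact that $w_t:=\sqrt{2}e^tv_t\to0$ in $L^2(he^{-\psi})$ into forms $s_t$ with $\bar\partial s_t=w_t$ and $\|s_t\|_{h,K}\to0$. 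One then shows $\tilde f_t:=\theta(\psi-t)\tilde f-u_t-s_t$ converges to $f$ in the Fr\'echet topology on $H^{n,q}_{(2)}(X_1,L,h/he^{-\psi})$, and concludes with the closedness of $H^0(S_1,\mathrm{Im}\,P)$ via Lemma~\ref{l:close property of subsheaf}. Without this functional-analytic apparatus there is no way to pass from a perturbed solution to an actual primitive, and without holomorphic convexity the Hausdorff property is simply unavailable. (Relatedly, your remark that the cohomology is ``finite-dimensional or behaves well under inverse limit'' is not correct; it is infinite-dimensional in general, and the exhaustion by sublevel sets $X_c$ that you propose does not occur in the paper — the Stein reduction localizes to a neighborhood of a point $y_0\in S$ and all limits are taken there.) Finally, the paper proves the surjectivity formulation by representing the quotient-sheaf class by a smooth $\tilde f$ and constructing truncations $g_{t,\rho}=\bar\partial(\theta(\psi_\rho-t)\tilde f)$, rather than the injectivity formulation $u=\bar\partial v$ you set up; and strong openness of multiplier ideals plays no role — the relevant monotonicity comes from $\phi_\rho\geq\phi$ and $\phi_\rho+\psi_\rho\downarrow\phi+\psi$ in Lemma~\ref{l:regularization-two-quasipsh}.
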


\begin{remark}\label{r:Y-definition}
The quotient sheaf $\mathcal{I}(h)/\mathcal{I}(he^{-\psi})$ is supported on an analytic subvariety $Y\subset X$, which is the zero set of the ideal sheaf
\[\mathcal{J}_Y:=\mathcal{I}(he^{-\psi}):\mathcal{I}(h)=\{g\in\mathcal{O}_X:\,\,g\cdot\mathcal{I}(h)\subset\mathcal{I}(he^{-\psi})\}.\]
The structure sheaf of $Y$ is $\mathcal{O}_Y:=(\mathcal{O}_X/\mathcal{J}_Y)\big|_{Y}$.

When $h$ is smooth, we have $\mathcal{I}(h)=\mathcal{O}_X$ and $\mathcal{I}(he^{-\psi})=\mathcal{I}(e^{-\psi})=\mathcal{J}_Y$. Then $\mathcal{O}_Y=\big(\mathcal{I}(h)/\mathcal{I}(he^{-\psi})\big)\big|_{Y}$ and Theorem \ref{t:Zhou-Zhu-nonreduced-quasipsh} implies that
\[H^q\big(X,\mathcal{O}_X(K_X\otimes L)\big)\longrightarrow H^q\big(Y,\mathcal{O}_Y(K_X\otimes L)\big)\]
is surjective.\qed
\end{remark}

\begin{remark}\label{r:C-D-M question}
Theorem \ref{t:Zhou-Zhu-nonreduced-quasipsh} was proved in \cite{Cao-Demailly-Matsumura2017} in the case when $\psi$ is a quasi-psh function with neat analytic singularities, here, a quasi-psh function $\varphi$ on $X$ is said to have analytic singularities if every point $x\in X$ possesses an open
neighborhood $U$ on which $\varphi$ can be written as
\[\varphi = c\log\sum\limits_{1\leq j\leq j_0}|g_j|^2+u,\]
where $c$ is a nonnegative number, $g_j\in\mathcal{O}_X(U)$ and $u$ is a bounded function on $U$. If $u$ is further assumed to be a smooth function on $U$, $\varphi$ is said to have neat analytic singularities (see \cite{Demailly2015}).

The general case when $\psi$ is a quasi-psh function with arbitrary singularities was posed as a question in Remark 3.10 in \cite{Cao-Demailly-Matsumura2017}. Theorem \ref{t:Zhou-Zhu-nonreduced-quasipsh} gives an affirmative answer to this question.\qed
\end{remark}

\begin{remark}\label{r:corollaries-thm-1}
Theorem \ref{t:Zhou-Zhu-nonreduced-quasipsh} also unifies some injectivity theorems in previous important works (see \cite{Tankeev1971}, \cite{Kollar1986}, \cite{Enoki1993}, \cite{Takegoshi1995}, \cite{Ohsawa2005}, \cite{Fujino2013a}, etc, especially the recent works \cite{Matsumura2014}, \cite{Matsumura2018}, \cite{Gongyo-Matsumura2017}, \cite{Fujino-Matsumura2016}, \cite{Matsumura2016}). In Section \ref{section-corollaries-of-thm-1}, we will show how to deduce these injectivity theorems from Theorem \ref{t:Zhou-Zhu-nonreduced-quasipsh}. A key point in deducing them is that $\psi$ is neither required to have analytic singularities nor required to be quasi-psh in Theorem \ref{t:Zhou-Zhu-nonreduced-quasipsh}. We will also discuss the application of Theorem \ref{t:Zhou-Zhu-nonreduced-quasipsh} to vanishing theorems in Section \ref{section-corollaries-of-thm-1}.\qed
\end{remark}

Some $L^2$ extension theorems and their important applications have been obtained since the establishment of the celebrated Ohsawa-Takegoshi $L^2$ extension theorem in \cite{Ohsawa-Takegoshi}. In recent years, optimal $L^2$ extension theorems (\cite{Blocki2012b}, \cite{Guan-Zhou2012}, \cite{Guan-Zhou2014-11}, \cite{Guan-Zhou2013b}) have been established since the utilization of the method of the undetermined function with ODE initiated in \cite{Guan-Zhou-Zhu-2011CR} and \cite{Zhu-Guan-Zhou} (see also \cite{Zhou-Zhu2010}). As an application of the optimal $L^2$ extension, inequality part of the Suita conjecture has been solved (\cite{Blocki2012b}, \cite{Guan-Zhou2012}). At that time very few other applications and connections of the optimal $L^2$ extension theorem existed, till some unexpected applications (including the proof of the full version of the Suita conjecture and the geometric meaning of the optimal $L^2$ extension) were found in \cite{Guan-Zhou2013b}. Actually, at the present time optimal $L^2$ extension theorems have many more interesting applications (see \cite{Cao2014}, \cite{Guan-Zhou2015-Invent}, \cite{Guan-Zhou2017}, \cite{Hacon-Popa-Schnell2016}, \cite{Ohsawa-book2015}, \cite{Ohsawa2017}, \cite{Paun-Takayama14}, \cite{Zhou-Zhu2015}, \cite{Zhou-Zhu2017}, etc).\\

For the above extension problem, it is desirable to obtain some optimal $L^2$ estimate. The second result in this paper (Theorem \ref{t:Zhou-Zhu-optimal-jet-extension}) is an $L^2$ extension theorem with an optimal estimate for holomorphic sections with an estimate, which generalizes and optimizes a general $L^2$ extension theorem in \cite{Demailly2015} (see Remark \ref{r:Demailly-jet} below). First let's recall some notions and notations as below.

Let $X$ be a complex $n$-dimensional manifold possessing a smooth Hermitian metric $\omega$, $\psi$ be an $L^1_{\mathrm{loc}}$ function on $X$ which is locally bounded above, and $(L,h)$ be a holomorphic line bundle over $X$ equipped with a singular Hermitian metric $h$. Assume that $\sqrt{-1}\Theta_{L,h}+\sqrt{-1}\partial\bar\partial\psi\geq\gamma$ on $X$ in the sense of currents for some continuous real $(1,1)$-form $\gamma$ on $X$.

In this paper, we don't assume that $\psi$ has analytic singularities and that $\psi$ is quasi-psh, although $\psi$ was assumed to be a quasi-psh function with analytic singularities in \cite{Ohsawa5}, \cite{Guan-Zhou2013b} and \cite{Demailly2015}.

Following Definition 2.11 in \cite{Demailly2015}, the \textbf{restricted multiplier ideal sheaf} $\mathcal{I}'_{\psi}(h)$ is defined
to be the set of germs $f\in\mathcal{I}(h)_x\subset\mathcal{O}_{X,x}$ such that there exists a coordinate neighborhood $U$ of $x$ satisfying
\[\varlimsup_{t\rightarrow-\infty}\int_{\{y\in U:\,t<\psi(y)<t+1\}}|f|^2e^{-\varphi-\psi}d\lambda<+\infty,\]
where $U$ is small enough such that $h$ can be written as $e^{-\varphi}$ with respect to a local holomorphic trivialization of $L$ on a neighborhood of $\overline{U}$, and $d\lambda$ is the $n$-dimensional Lebesgue measure on $U$.
It is obvious that $\mathcal{I}'_{\psi}(h)\supset\mathcal{I}(he^{-\psi})$.

Denote by $Y$ the zero set of the ideal sheaf
$\mathcal{J}_Y:=\mathcal{I}(he^{-\psi}):\mathcal{I}(h)$ (cf. Remark \ref{r:Y-definition}). Let $f$ be an element in
\[H^0\big(X,\mathcal{O}_X(K_X\otimes L)\otimes\mathcal{I}'_{\psi}(h)/\mathcal{I}(he^{-\psi})\big).\]
Then $f$ is actually supported on $Y$. We define a positive measure $|f|^2_{\omega,h}dV_{X,\omega}[\psi]$ (a purely formal notation, cf. \cite{Ohsawa5}, \cite{Guan-Zhou2013b} and (2.10) in \cite{Demailly2015}) on $Y$ as
the minimum element of the partially ordered set of positive measures $d\mu$
satisfying
\[\int_Ygd\mu\geq\varlimsup_{t\rightarrow-\infty}\int_{\{x\in X:\,t<\psi(x)<t+1\}}g|\tilde{f}|^2_{\omega,h}e^{-\psi}dV_{X,\omega}\]
for any nonnegative continuous function $g$ on $X$ with $\mathrm{supp}\,g\subset\subset X$, where $\tilde{f}$ is a smooth extension of $f$ to $X$ such that $\tilde{f}-\hat{f}\in\mathcal{O}_X(K_X\otimes L)\otimes_{\mathcal{O}_X}\mathcal{I}(he^{-\psi})\otimes_{\mathcal{O}_X}\mathcal{C}^\infty$ locally for any local holomorphic representation $\hat{f}$ of $f$. It is not hard to check that the upper limit on the right hand side of the above inequality is independent of the choice of $\tilde{f}$.\\

It is useful to consider $L^2$ estimates with variable factors. Let us recall the following definition in \cite{Guan-Zhou2013b}.

\begin{definition}[\cite{Guan-Zhou2013b}]
Let $\alpha_0\in(-\infty,+\infty]$ and $\alpha\in(0,+\infty]$. If $\alpha=+\infty$, $\frac{1}{\alpha}$ is defined to be $0$.
When $\alpha_0\neq+\infty$, let $\mathfrak{R}_{\alpha_0,\alpha}$ be the class of functions defined by
\begin{align*}
\bigg\{&R\in
C^{\infty}(-\infty,\alpha_0];\text{ }R>0,\text{ }R\text{ is decreasing near }-\infty,
\\&\varlimsup\limits_{t\rightarrow-\infty} e^tR(t)<+\infty,
\text{ }C_R:=\int_{-\infty}^{\alpha_0}\frac{1}{R(t)}dt<+\infty\text{ and}
\\&\int_t^{\alpha_0}\bigg(\frac{1}{\alpha R(\alpha_0)}+\int_{t_2}^{\alpha_0}\frac{dt_1}{R(t_1)}\bigg)dt_2+\frac{1}{\alpha^2R(\alpha_0)}
<R(t)\bigg(\frac{1}{\alpha R(\alpha_0)}+\int_t^{\alpha_0}\frac{dt_1}{R(t_1)}\bigg)^2
\\&\text{for all }t\in(-\infty,\alpha_0)\bigg\}.
\end{align*}
When $\alpha_0=+\infty$, we replace $R\in C^\infty(-\infty,\alpha_0]$ with the assumptions
\[R\in C^\infty(-\infty,+\infty),\quad R(+\infty):=\lim\limits_{t\rightarrow+\infty} R(t)=+\infty\quad\text{and} \quad\lim\limits_{t\rightarrow+\infty}\frac{R(t)}{R'(t)}\geq\frac{1}{\alpha}\]
in the above definition of $\mathfrak{R}_{\alpha_0,\alpha}$.\qed
\end{definition}

\begin{remark}\label{r:defintion-of-function-class}
The number $\alpha_0$, $\alpha$ and the function $R(t)$ are equal to the number $A$, $\delta$ and the function $\frac{1}{c_A(-t)e^t}$ defined just before the main theorems in \cite{Guan-Zhou2013b}. If $\alpha_0\neq+\infty$ and $R$ is decreasing on $(-\infty,\alpha_0]$, the longest inequality in the definition of $\mathfrak{R}_{\alpha_0,\alpha}$ holds for all $t\in(-\infty,\alpha_0)$.\qed
\end{remark}

\begin{thm}\label{t:Zhou-Zhu-optimal-jet-extension}
Let $\alpha_0\in(-\infty,+\infty]$, $\alpha\in(0,+\infty]$, and $R\in\mathfrak{R}_{\alpha_0,\alpha}$. Let
$X$ be a weakly pseudoconvex complex $n$-dimensional
manifold possessing a K\"{a}hler metric $\omega$, $\psi$ be an $L^1_{\mathrm{loc}}$ function on $X$ satisfying $\sup\limits_{\Omega}\psi<\alpha_0$ for every relatively compact set $\Omega\subset\subset X$, and $(L,h)$ be a holomorphic line bundle over $X$ equipped with a singular Hermitian metric $h$. Denote by $Y$ the zero set of the ideal sheaf
$\mathcal{J}_Y:=\mathcal{I}(he^{-\psi}):\mathcal{I}(h)$ (cf. Remark \ref{r:Y-definition}). Assume that the following two inequalities hold on $X$ in the sense of currents:
\\$(i)$\quad$\sqrt{-1}\Theta_{L,h}+\sqrt{-1}\partial\bar\partial\psi\geq0$,
\\$(ii)$\quad$\frac{1}{1+\alpha}\sqrt{-1}\Theta_{L,h}+\sqrt{-1}\partial\bar\partial\psi\geq0$.
\\Then for every section $f\in H^0\big(X,\mathcal{O}_X(K_X\otimes L)\otimes\mathcal{I}'_{\psi}(h)/\mathcal{I}(he^{-\psi})\big)$ such that
\begin{equation}\label{ie:jet-extension-thm-f-finite}
\int_Y|f|^2_{\omega,h} dV_{X,\omega}[\psi]<+\infty,
\end{equation}
there exists a section $F\in H^0\big(X,\mathcal{O}_X(K_X\otimes L)\otimes\mathcal{I}'_{\psi}(h)\big)$ which maps to $f$ under the morphism $\mathcal{I}'_{\psi}(h)\longrightarrow\mathcal{I}'_{\psi}(h)/\mathcal{I}(he^{-\psi})$, such that
\begin{equation}\label{ie:jet extension thm final estimate}
\int_X\frac{|F|^2_{\omega,h}}{e^{\psi}R(\psi)} dV_{X,\omega}
\leq \bigg(\frac{1}{\alpha R(\alpha_0)}+C_R\bigg)\int_Y|f|^2_{\omega,h} dV_{X,\omega}[\psi].
\end{equation}
Moreover, the restriction morphism
\[H^0\big(X,\mathcal{O}_X(K_X\otimes L)\otimes\mathcal{I}'_{\psi}(h)\big)\longrightarrow H^0\big(X,\mathcal{O}_X(K_X\otimes L)\otimes\mathcal{I}'_{\psi}(h)/\mathcal{I}(he^{-\psi})\big)\]
is surjective.
\end{thm}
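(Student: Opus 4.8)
The plan is to combine a simultaneous smoothing of the two weight functions with the optimal $L^2$ estimate produced by the undetermined-function/ODE method of \cite{Guan-Zhou2013b}, arranging matters so that the weights are regularized and then recovered \emph{before} any other passage to the limit. First I would reduce to a relatively compact situation: choosing a smooth plurisubharmonic exhaustion $\rho$ of the weakly pseudoconvex manifold $X$ and working on the sublevel sets $X_c=\{\rho<c\}$, on which $h$ can be written as $e^{-\varphi}$ with $\varphi$ quasi-plurisubharmonic and $\sup_{X_c}\psi<\alpha_0$. Hypotheses $(i)$ and $(ii)$ say precisely that $u_1:=\varphi+\psi$ and $u_2:=\varphi+(1+\alpha)\psi$ are plurisubharmonic, so Lemma~\ref{l:regularization-two-quasipsh} produces smooth functions $u_{1,\nu}\downarrow u_1$ and $u_{2,\nu}\downarrow u_2$ with $\sqrt{-1}\partial\bar\partial u_{j,\nu}\ge-\varepsilon_\nu\omega$, $\varepsilon_\nu\downarrow 0$; setting $\psi_\nu:=\alpha^{-1}(u_{2,\nu}-u_{1,\nu})$, $\varphi_\nu:=u_{1,\nu}-\psi_\nu$ and $h_\nu:=e^{-\varphi_\nu}$, one gets $\varphi_\nu+\psi_\nu=u_{1,\nu}\downarrow\varphi+\psi$, $\psi_\nu\downarrow\psi$, and $\varepsilon_\nu\omega$-approximate versions of $(i)$ and $(ii)$ for $(\varphi_\nu,\psi_\nu)$. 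This simultaneous regularization is what replaces, in the absence of analytic singularities, the log-resolution step used in \cite{Demailly2015}.

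Second, for fixed $\nu$ I would carry out the core $L^2$ construction. Fix once and for all a smooth extension $\tilde f$ of $f$ to $X$; truncate it against the regularized weight by $v_\nu:=\tilde f\,b(\psi_\nu)$, where $b$ is a smooth cut-off equal to $1$ near $-\infty$, so that $\bar\partial v_\nu$ is a smooth $\bar\partial$-closed $(0,1)$-form with values in $K_X\otimes L$, and solve $\bar\partial w_\nu=\bar\partial v_\nu$ on $X_c$ by the twisted Bochner--Kodaira--Nakano method with weight $e^{-\varphi_\nu-\psi_\nu}$, the twist $\eta(\psi_\nu)$ and the correction term $\lambda(\psi_\nu)$ built from $R\in\mathfrak{R}_{\alpha_0,\alpha}$ through the ODE that defines the class $\mathfrak{R}_{\alpha_0,\alpha}$. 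By that ODE the positivity required of the curvature term reduces to a combination, with nonnegative coefficients, of $\sqrt{-1}\Theta_{L,h_\nu}+\sqrt{-1}\partial\bar\partial\psi_\nu$ and $\tfrac{1}{1+\alpha}\sqrt{-1}\Theta_{L,h_\nu}+\sqrt{-1}\partial\bar\partial\psi_\nu$, which is exactly why both hypotheses $(i)$ and $(ii)$ are needed; the $\varepsilon_\nu$-errors are harmless and vanish in the limit, the admissible constant that comes out is $\tfrac{1}{\alpha R(\alpha_0)}+C_R$, and letting the cut-off approach its limit turns the right-hand side into $\int_Y|f|^2_{\omega,h_\nu}\,dV_{X,\omega}[\psi_\nu]$. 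This yields a holomorphic $F_\nu=v_\nu-w_\nu$ on $X_c$ with $\int_{X_c}|F_\nu|^2_{\omega,h_\nu}e^{-\psi_\nu}R(\psi_\nu)^{-1}dV_{X,\omega}\le(\tfrac{1}{\alpha R(\alpha_0)}+C_R+\varepsilon_\nu)\int_Y|f|^2_{\omega,h_\nu}dV_{X,\omega}[\psi_\nu]$ together with a uniform local bound $\int_{K}|w_\nu|^2 e^{-\varphi_\nu-\psi_\nu}dV_{X,\omega}\le C_K$ for every compact $K\subset X_c$.

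Third, I let $\nu\to\infty$ with $X_c$ fixed. On $X_c$ the smooth weights are bounded, so the displayed estimate gives a uniform $L^2$ bound for the holomorphic sections $F_\nu$; passing to a subsequence converging locally uniformly produces a holomorphic $F$ on $X_c$, and since $\varphi_\nu+\psi_\nu=u_{1,\nu}\downarrow\varphi+\psi$ (so $e^{-\varphi_\nu-\psi_\nu}\uparrow e^{-\varphi-\psi}$) and $\psi_\nu\downarrow\psi$, Fatou's lemma upgrades the estimates to $\int_{X_c}|F|^2_{\omega,h}e^{-\psi}R(\psi)^{-1}dV_{X,\omega}\le(\tfrac{1}{\alpha R(\alpha_0)}+C_R)\int_Y|f|^2_{\omega,h}dV_{X,\omega}[\psi]$ and, for the correction term, $\int_{K}|w|^2 e^{-\varphi-\psi}dV_{X,\omega}<\infty$ for every compact $K$, where $w=\tilde f\,b(\psi)-F$; the latter forces the germs of $w$ into $\mathcal{I}(he^{-\psi})$, so that $F\in\mathcal{I}'_{\psi}(h)$ and $F$ represents $f$ in $\mathcal{I}'_{\psi}(h)/\mathcal{I}(he^{-\psi})$. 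Finally, letting $c\to\infty$ and extracting once more (the constant being independent of $c$) gives a global section $F\in H^0\big(X,\mathcal{O}_X(K_X\otimes L)\otimes\mathcal{I}'_{\psi}(h)\big)$ mapping to $f$ and satisfying \eqref{ie:jet extension thm final estimate}; the surjectivity of the restriction morphism for an arbitrary section is then the same construction run over an exhaustion of $X$, the right-hand side being finite on each relatively compact piece precisely because $\mathcal{I}'_{\psi}(h)$ is the sheaf of germs with locally finite norm of that type.

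The point I expect to be most delicate is that there is, strictly speaking, no ``extension problem'' to solve at the regularized level: when $\psi_\nu$ is smooth the quotient $\mathcal{I}'_{\psi}(h_\nu)/\mathcal{I}(h_\nu e^{-\psi_\nu})$ degenerates, so the entire extension content of the theorem can emerge only in the weight limit. Making that emergence rigorous --- showing that the $\bar\partial$-corrected smooth sections $v_\nu-w_\nu$ converge to a holomorphic section which genuinely represents $f$ modulo the \emph{singular} ideal $\mathcal{I}(he^{-\psi})$, while the \emph{optimal} constant $\tfrac{1}{\alpha R(\alpha_0)}+C_R$ is not lost in the process --- is exactly what forces the unusual order of limits, with the weight limit performed first and the cut-off and exhaustion limits afterwards, and is the crux of the argument.
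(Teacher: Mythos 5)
Your proposal correctly assembles most of the paper's ingredients — the simultaneous regularization supplied by Lemma~\ref{l:regularization-two-quasipsh}, the twisted Bochner--Kodaira machinery with twist and correction determined by the ODE underlying $\mathfrak{R}_{\alpha_0,\alpha}$, the exhaustion by relatively compact weakly pseudoconvex pieces, Fatou and weak-limit extraction — and your closing paragraph even names the decisive point, that the weight limit must be performed first and the cut-off and exhaustion limits afterwards. But the body of your proposal implements the opposite order: in your second step, for each fixed regularization parameter $\nu$, you let the cut-off approach its limit to produce a holomorphic $F_\nu$ bounded by $\bigl(\frac{1}{\alpha R(\alpha_0)}+C_R+\varepsilon_\nu\bigr)\int_Y|f|^2_{\omega,h_\nu}\,dV_{X,\omega}[\psi_\nu]$, and only in the third step do you let $\nu\to\infty$. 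This order fails. For fixed $\nu$ the regularized weight $\psi_\nu$ has a much weaker singularity set (contained in the analytic set $\Sigma_\varsigma$ of large Lelong numbers), so the $t\to-\infty$ limit defining $dV_{X,\omega}[\psi_\nu]$ lives on $\Sigma_\varsigma$ rather than on $Y$, and in general is either $0$ or not finite: there is no reason for $f$ to lie in $\mathcal{I}'_{\psi_\nu}(h_\nu)$, and no reason for $dV_{X,\omega}[\psi_\nu]$ to converge to $dV_{X,\omega}[\psi]$. So the $F_\nu$ you produce would be controlled by a degenerate right-hand side, and the third-step limit would give nothing.

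The paper avoids this precisely by keeping the $\bar\partial$ data parametrized by \emph{both} the regularization parameter $\rho$ and the cut-off parameter $t$ simultaneously, and taking $\rho\to0$ \emph{before} $t\to-\infty$: after solving $\bar\partial u_{t,\rho}+\sqrt{2\delta(t)}\,v_{t,\rho}=\bar\partial\bigl((1-\sigma'_{\varepsilon,t}(\psi_\rho))\tilde f\bigr)$ with a bound $C_\rho(t)$ involving $\mathbb{I}_{\{t<\psi_\rho<t+1\}}$, passing $\rho\to0$ replaces this by $\mathbb{I}_{\{t\le\psi\le t+1\}}$ against the \emph{original} $\psi$, and only then does $t\to-\infty$ make the bound converge to $\int_Y|f|^2_{\omega,h}\,dV_{X,\omega}[\psi]$. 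Two further (more minor) points. First, Lemma~\ref{l:regularization-two-quasipsh} does not produce everywhere-smooth regularizations: the functions are smooth only off $\Sigma_\varsigma$, and one must work on the complete Kähler manifold $X_k\setminus\Sigma_\rho$ (Lemma~\ref{l:Demailly1982-complete-metric}) and use the $L^1$ control on $\bar\partial\psi_\rho$ together with Lemma~\ref{l:extension} to push the $\bar\partial$-equation across $\Sigma_\rho$, so "smooth $u_{j,\nu}\downarrow u_j$" oversimplifies. Second, your scheme solves $\bar\partial w_\nu=\bar\partial v_\nu$ exactly, with no error term, whereas the regularization unavoidably leaves a negative curvature error $-\delta_\rho\omega$; the paper handles this with Demailly's approximate solution $\bar\partial u+\sqrt{\delta}\,v=g$ (Lemma~\ref{l:Demailly-non complete metric}) and a separate, carefully calibrated $\delta(t)$ driving the correction $v$ to zero. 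Without that device your second step as written does not close.
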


\begin{remark}\label{r:Demailly-jet}
Theorem \ref{t:Zhou-Zhu-optimal-jet-extension} was proved in \cite{Demailly2015} for an explicit function $R$ with a non optimal $L^2$ estimate in the case when $\psi=(m_p-m_{p-1})\varphi$ for a quasi-psh function $\varphi$ on $X$ with neat analytic singularities, where $m_p$ are jumping numbers (see Theorem 2.13 in \cite{Demailly2015} for details).
Theorem \ref{t:Zhou-Zhu-optimal-jet-extension} gives an optimal $L^2$ estimate of Theorem 2.13 in \cite{Demailly2015}. In fact,
the constant $\frac{1}{\alpha R(\alpha_0)}+C_R$ in the $L^2$ estimate \eqref{ie:jet extension thm final estimate} is optimal since it is reached in some special cases of Theorem \ref{t:Zhou-Zhu-optimal-jet-extension} (see \cite{Guan-Zhou2013b}).\qed
\end{remark}

\begin{remark}
By using the methods in \cite{Guan-Zhou2013b} and \cite{Berndtsson-Lempert}, Hosono \cite{Hosono2017} obtained an optimal $L^2$ estimate of Theorem 2.13 in \cite{Demailly2015} in the case when $X$ is a bounded pseudoconvex domain in $\mathbb{C}^n$, $Y$ is a closed complex submanifold, $\psi$ is a negative psh Green-type function continuous on $X\setminus Y$ with poles along $Y$, and $(L,h)$ is a trivial bundle with a continuous Hermitian metric $h$.\qed
\end{remark}
\smallskip

\bigskip


\section{Applications of Theorem \ref{t:Zhou-Zhu-nonreduced-quasipsh}}\label{section-corollaries-of-thm-1}

In this section, we will show that Theorem \ref{t:Zhou-Zhu-nonreduced-quasipsh} implies several recent injectivity theorems obtained in \cite{Matsumura2014}, \cite{Matsumura2018}, \cite{Gongyo-Matsumura2017}, \cite{Fujino-Matsumura2016} and \cite{Matsumura2016}. We will also discuss an application of Theorem \ref{t:Zhou-Zhu-nonreduced-quasipsh} to vanishing theorems.

By Theorem \ref{t:Zhou-Zhu-nonreduced-quasipsh}, we can obtain the following result, which unifies some well-known injectivity theorems.

\begin{thm}\label{cor:FM2016}
Let $X$ be a holomorphically convex K\"{a}hler manifold. Let $(F,h_F)$ and $(M,h_M)$ be two holomorphic line bundles over $X$ equipped with singular Hermitian metrics $h_F$ and $h_M$ respectively. Assume that the following two inequalities hold on $X$ in the sense of currents:
\\$(i)$\quad$\sqrt{-1}\Theta_{F,h_F}\geq0$,
\\$(ii)$\quad$\sqrt{-1}\Theta_{F,h_F}\geq b\sqrt{-1}\Theta_{M,h_M}$ for some $b\in(0,+\infty)$.
\\Then, for a non-zero global holomorphic section $s$ of $M$ satisfying $\sup\limits_{\Omega}|s|_{h_M}<+\infty$ for every relatively compact set $\Omega\subset\subset X$, the following map $\beta$ induced by the tensor product with $s$
\[H^q\big(X,\mathcal{O}_X(K_X\otimes F)\otimes\mathcal{I}(h_F)\big)\overset{\beta}\longrightarrow H^q\big(X,\mathcal{O}_X(K_X\otimes F\otimes M)\otimes \mathcal{I}(h_Fh_M)\big)\]
is injective for every $q\geq0$.
\end{thm}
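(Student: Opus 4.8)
The plan is to deduce Theorem \ref{cor:FM2016} from Theorem \ref{t:Zhou-Zhu-nonreduced-quasipsh} by an explicit choice of the data $(L,h,\psi,\alpha)$. I would set $L:=F\otimes M$, $h:=h_Fh_M$, and $\psi:=\log|s|^2_{h_M}$, and take for $\alpha$ the constant $b>0$. The first thing to check is that $\psi$ fits the hypotheses on the weight function: since $s$ is a non-zero holomorphic section, $\psi\in L^1_{\mathrm{loc}}(X)$ (on each connected component of $X$ we have $s\not\equiv0$), and the local boundedness from above of $\psi$ on every $\Omega\subset\subset X$ is exactly the hypothesis $\sup_{\Omega}|s|_{h_M}<+\infty$. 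Also, working in simultaneous local trivializations of $F$ and $M$ (writing $h_F=e^{-\varphi_F}$, $h_M=e^{-\varphi_M}$, and $s$ as a local holomorphic function $\tilde s$) one computes $he^{-\psi}=e^{-\varphi_F}|\tilde s|^{-2}$, whose local weight $\varphi_F+\log|\tilde s|^2$ is quasi-psh, so $h$ and $he^{-\psi}$ are genuine singular Hermitian metrics on $L$.

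Next I would verify the two curvature inequalities. By the Lelong--Poincar\'e formula, $\sqrt{-1}\partial\bar\partial\psi+\sqrt{-1}\Theta_{M,h_M}=[Z_s]$ in the sense of currents, where $[Z_s]\geq0$ is the (effective) current of integration along the zero divisor of $s$ (up to a positive normalization constant). Since $\sqrt{-1}\Theta_{L,h}=\sqrt{-1}\Theta_{F,h_F}+\sqrt{-1}\Theta_{M,h_M}$, we get
\[\sqrt{-1}\Theta_{L,h}+\sqrt{-1}\partial\bar\partial\psi=\sqrt{-1}\Theta_{F,h_F}+[Z_s]\geq0\]
using hypothesis $(i)$; this is condition $(i)$ of Theorem \ref{t:Zhou-Zhu-nonreduced-quasipsh}. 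Moreover
\[\sqrt{-1}\Theta_{L,h}+(1+b)\sqrt{-1}\partial\bar\partial\psi=\big(\sqrt{-1}\Theta_{F,h_F}-b\sqrt{-1}\Theta_{M,h_M}\big)+(1+b)[Z_s]\geq0\]
using hypothesis $(ii)$; this is condition $(ii)$ of Theorem \ref{t:Zhou-Zhu-nonreduced-quasipsh} with $\alpha=b$. As $X$ is holomorphically convex and carries a K\"ahler metric, Theorem \ref{t:Zhou-Zhu-nonreduced-quasipsh} applies and gives that the inclusion $\mathcal{I}(he^{-\psi})\hookrightarrow\mathcal{I}(h)$ induces an injection on $H^q\big(X,\mathcal{O}_X(K_X\otimes L)\otimes\,\cdot\,\big)$ for every $q\geq0$.

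It then remains to identify this injection with $\beta$. Using the local computation above, tensoring by $s$ yields an isomorphism of sheaves
\[\cdot\,s:\ \mathcal{O}_X(K_X\otimes F)\otimes\mathcal{I}(h_F)\ \xrightarrow{\ \sim\ }\ \mathcal{O}_X(K_X\otimes F\otimes M)\otimes\mathcal{I}(he^{-\psi}),\]
since $\int_U|\tilde u\tilde s|^2e^{-\varphi_F}|\tilde s|^{-2}d\lambda=\int_U|\tilde u|^2e^{-\varphi_F}d\lambda$ shows the map lands in $\mathcal{I}(he^{-\psi})$, while conversely every germ of $\mathcal{I}(he^{-\psi})$ is divisible by $\tilde s$ (because $e^{-\varphi_F}$ is locally bounded below, so the relevant local integral controls $\int|g|^2|\tilde s|^{-2}d\lambda$, forcing $\tilde s\mid g$). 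Composing this isomorphism with the inclusion $\mathcal{I}(he^{-\psi})\hookrightarrow\mathcal{I}(h_Fh_M)$ recovers precisely the sheaf morphism that induces $\beta$; hence $\beta$ factors as an isomorphism on cohomology followed by the injective map supplied by Theorem \ref{t:Zhou-Zhu-nonreduced-quasipsh}, and is therefore injective for every $q\geq0$.

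I expect the only step needing genuine care to be the sheaf-level bookkeeping in the last paragraph---verifying that $\cdot\,s$ maps $\mathcal{O}_X(K_X\otimes F)\otimes\mathcal{I}(h_F)$ isomorphically onto $\mathcal{O}_X(K_X\otimes F\otimes M)\otimes\mathcal{I}(he^{-\psi})$ and is compatible, via the canonical trivializations, with the inclusion appearing in Theorem \ref{t:Zhou-Zhu-nonreduced-quasipsh}---together with the minor point of ensuring $\psi\in L^1_{\mathrm{loc}}$, which requires restricting to the connected components of $X$ on which $s\not\equiv0$. The curvature manipulations themselves are routine once the Lelong--Poincar\'e formula is invoked.
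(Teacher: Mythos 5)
Your proposal is correct and follows essentially the same route as the paper: set $L=F\otimes M$, $h=h_Fh_M$, $\psi=2\log|s|_{h_M}$, $\alpha=b$, apply Theorem \ref{t:Zhou-Zhu-nonreduced-quasipsh}, and factor $\beta=\iota\circ\sigma$ where $\sigma$ is an isomorphism induced by multiplication by $s$ and $\iota$ is the injective map supplied by the theorem. The only (minor, stylistic) difference is that you establish $\sigma$ directly as a sheaf isomorphism $\mathcal{O}_X(K_X\otimes F)\otimes\mathcal{I}(h_F)\xrightarrow{\sim}\mathcal{O}_X(K_X\otimes F\otimes M)\otimes\mathcal{I}(h_Fh_Me^{-\psi})$ (using that $\mathcal{I}(e^{-2\log|\tilde s|})=(\tilde s)$ locally), which automatically yields the cohomology isomorphism, whereas the paper phrases the same step via isomorphisms of sections over a Stein cover followed by \v{C}ech/Leray; both arguments also make the curvature verification either explicit (you, via Lelong--Poincar\'e) or implicit (the paper).
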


\begin{proof}
Let $\psi:=2\log|s|_{h_M}$. Then $\psi$ is an $L^1_{\mathrm{loc}}$ function on $X$ which is locally bounded above.

Let $\mathcal{U}=\{U_i\}_{i\in I}$ be a Stein covering of $X$.

It is easy to see that the following maps induced by the tensor product with $s$
\[H^0\big(U_i,\mathcal{O}_X(K_X\otimes F)\otimes\mathcal{I}(h_F)\big)\longrightarrow H^0\big(U_i,\mathcal{O}_X(K_X\otimes F\otimes M)\otimes\mathcal{I}(h_Fh_Me^{-\psi})\big)\]
are isomorphisms. Hence they induce isomorphisms between the spaces of \v{C}ech cochains $C^p\big(\mathcal{U},\mathcal{O}_X(K_X\otimes F)\otimes\mathcal{I}(h_F)\big)$ and $C^p\big(\mathcal{U},\mathcal{O}_X(K_X\otimes F\otimes M)\otimes\mathcal{I}(h_Fh_Me^{-\psi})\big)$.

Since these isomorphisms between the spaces of \v{C}ech cochains commute with the \v{C}ech coboundary mappings, it follows from Leray's theorem that the following map $\sigma$ induced by the tensor product with $s$
\[H^q\big(X,\mathcal{O}_X(K_X\otimes F)\otimes\mathcal{I}(h_F)\big)\overset{\sigma}\longrightarrow H^q\big(X,\mathcal{O}_X(K_X\otimes F\otimes M)\otimes\mathcal{I}(h_Fh_Me^{-\psi})\big)\]
is an isomorphism.

Let $\iota$ be the following map induced by the natural inclusion $\mathcal{I}(h_Fh_Me^{-\psi})\longrightarrow\mathcal{I}(h_Fh_M)$
\[H^q\big(X,\mathcal{O}_X(K_X\otimes F\otimes M)\otimes\mathcal{I}(h_Fh_Me^{-\psi})\big)\overset{\iota}\longrightarrow H^q\big(X,\mathcal{O}_X(K_X\otimes F\otimes M)\otimes \mathcal{I}(h_Fh_M)\big).\]
Then we get the injectivity of $\iota$ by applying Theorem \ref{t:Zhou-Zhu-nonreduced-quasipsh} to the case when $L:=F\otimes M$, $h_L:=h_Fh_M$ and $\alpha:=b$.

Therefore, the map $\beta$ in Theorem \ref{cor:FM2016} is injective by the relation $\beta=\iota\circ\sigma$. Hence we get Theorem \ref{cor:FM2016}.
\end{proof}
\smallskip

\begin{remark}
Theorem \ref{cor:FM2016} was proved in \cite{Matsumura2014} in the case when $X$ is compact, and the metrics $h_F$, $h_M$ are both smooth on some Zariski open subset of $X$ (see Theorem 1.5 in \cite{Matsumura2014}).\qed
\end{remark}

\begin{remark}
Theorem \ref{cor:FM2016} unifies the two main results Theorem 1.2 and Theorem 1.3 in \cite{Matsumura2016}. More precisely, Theorem \ref{cor:FM2016} was proved in \cite{Matsumura2016} under one of the following two additional assumptions:
\\$(1)$\quad$(M,h_M)=(F^m,h_F^m)$ for some nonnegative integer $m$ (see Theorem 1.2 in \cite{Matsumura2016}, and see also Theorem 1.3 in \cite{Matsumura2018} for the case when $X$ is compact);
\\$(2)$\quad$h_M$ is smooth and $\sqrt{-1}\Theta_{M,h_M}\geq0$ (see Theorem 1.3 in \cite{Matsumura2016}, and see also Theorem A in \cite{Fujino-Matsumura2016} for the case when $X$ is compact).\qed
\end{remark}

\begin{remark}
Theorem \ref{cor:FM2016} was also proved in \cite{Gongyo-Matsumura2017} in the case when $X$ is compact, $\sqrt{-1}\Theta_{M,h_M}\geq0$ and $h_F=h^b_Mh_\Delta$ for some $b\in(0,+\infty)$ and some effective $\mathbb{R}$-divisor $\Delta$ on $X$, where $h_\Delta$ is the singular Hermitian metric defined by $\Delta$ (see Theorem 1.3 in \cite{Gongyo-Matsumura2017}). Thus the dlt extension theorem 1.4 in \cite{Gongyo-Matsumura2017} for compact K\"{a}hler manifolds also holds for holomorphic convex K\"{a}hler manifolds by using Theorem \ref{cor:FM2016} and the same arguments as in \cite{Gongyo-Matsumura2017} (see also \cite{Siu98}, \cite{Siu02}, \cite{Siu04} and \cite{Demailly-Hacon-Paun2013} for the background of the dlt extension problem).\qed
\end{remark}
\medskip

Injectivity theorems have benn used to obtain vanishing theorems in many previous important works. Using Theorem \ref{cor:FM2016}, we can get a vanishing theorem (Theorem \ref{cor:vanishing-thm}). Before stating the result, let's recall some notions and notations.

For any holomorphic line bundle $(L,h_L)$ equipped with a singular Hermitian metric $h_L$ over a compact complex manifold $X$, denote by $H^0_{\mathrm{bdd},h_L}(X,L)$ the space of the holomorphic sections of $L$ with bounded norms. Namely,
\[H^0_{\mathrm{bdd},h_L}(X,L):=\{s\in H^0(X,L):\,\sup_{X}|s|_{h_L}<+\infty\}.\]

Let $\{h_k\}_{k=1}^{+\infty}$ be a sequence of singular Hermitian metrics on $L$. The {\bf generalized Kodaira dimension} $\kappa_{\mathrm{bdd}}(L,\{h_k\}_{k=1}^{+\infty})$ of $(L,\{h_k\}_{k=1}^{+\infty})$ is defined to be $-\infty$ if $H^0_{\mathrm{bdd},(h_k)^k}(X,L^k)=0$ for any positive integer $k$ which is large enough. Otherwise, $\kappa_{\mathrm{bdd}}(L,\{h_k\}_{k=1}^{+\infty})$ is defined to be
\[\sup\bigg\{m\in \mathbb{Z}:\,\varlimsup_{k\rightarrow+\infty}\frac{\dim H^0_{\mathrm{bdd},(h_k)^k}(X,L^k)}{k^m}>0\bigg\}.\]

For more details about the generalized Kodaira dimension in the case when $h_k=h_L$ $(\forall k\in \mathbb{Z}^+)$ for some fixed metric $h_L$ on $L$, one can see Section 5.2 in \cite{Matsumura2014} and Section 4 in \cite{Matsumura2018}.

\begin{thm}\label{cor:vanishing-thm}
Let $X$ be a complex $n$-dimensional projective manifold, and $(F,h_F)$ be a holomorphic line bundle over $X$ equipped with a singular Hermitian metric $h_F$ satisfying $\sqrt{-1}\Theta_{F,h_F}\geq0$ on $X$ in the sense of currents. Let $Q$ be a holomorphic line bundle over $X$, and $\{h_k\}_{k=1}^{+\infty}$ be a sequence of singular Hermitian metrics on $Q$. Assume that the following two inequalities hold on $X$ in the sense of currents:
\\$(i)$\quad$\sqrt{-1}\Theta_{F,h_F}\geq\varepsilon_k\sqrt{-1}\Theta_{Q,h_k}$ $(\forall k\in \mathbb{Z}^+)$ for some sequence of positive numbers $\{\varepsilon_k\}_{k=1}^{+\infty}$,
\\$(ii)$\quad$\sqrt{-1}\Theta_{F,h_F}+k\sqrt{-1}\Theta_{Q,h_k}\geq-C\omega$ $(\forall k\in \mathbb{Z}^+)$ for some positive number $C$ and some smooth positive $(1,1)$-form $\omega$ on $X$.
\\Then
\[H^q\big(X,\mathcal{O}_X(K_X\otimes F)\otimes\mathcal{I}(h_F)\big)=0\quad\text{for}\quad q>n-\kappa_{\mathrm{bdd}}(Q,\{h_k\}_{k=1}^{+\infty}),\]
where $\kappa_{\mathrm{bdd}}(Q,\{h_k\}_{k=1}^{+\infty})$ is the generalized Kodaira dimension of $(Q,\{h_k\}_{k=1}^{+\infty})$.
\end{thm}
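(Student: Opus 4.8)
The plan is to combine the injectivity statement of Theorem \ref{cor:FM2016} with a dimension-counting argument in the spirit of Matsumura's work (Section 5.2 of \cite{Matsumura2014}). First I would fix $q > n - \kappa_{\mathrm{bdd}}(Q,\{h_k\}_{k=1}^{+\infty})$, so that by definition of the generalized Kodaira dimension, for every $m \in \mathbb{Z}$ with $m \geq n-q+1$ one has $\varlimsup_{k\to+\infty} \dim H^0_{\mathrm{bdd},(h_k)^k}(X,Q^k)/k^m = 0$; in particular, picking $m = n-q+1$, the dimension $d_k := \dim H^0_{\mathrm{bdd},(h_k)^k}(X,Q^k)$ grows slower than $k^{n-q+1}$ along a subsequence. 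The idea is that each nonzero bounded section $s$ of $Q^k$ gives, via Theorem \ref{cor:FM2016} applied with $(F,h_F)$, $(M,h_M) := (Q^k,(h_k)^k)$ and $b := \varepsilon_k$ (note hypothesis $(i)$ of the present theorem, applied $k$ times, gives $\sqrt{-1}\Theta_{F,h_F} \geq \varepsilon_k k \sqrt{-1}\Theta_{Q,h_k} \cdot \frac{1}{k}$, so one takes $b = \varepsilon_k/k$ or rather uses the metric $(h_k)^k$ directly), an injective map
\[
H^q\big(X,\mathcal{O}_X(K_X\otimes F)\otimes\mathcal{I}(h_F)\big) \hookrightarrow H^q\big(X,\mathcal{O}_X(K_X\otimes F\otimes Q^k)\otimes\mathcal{I}(h_F(h_k)^k)\big).
\]

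Next I would bound the target cohomology group. Since $X$ is projective and $\sqrt{-1}\Theta_{F,h_F} + k\sqrt{-1}\Theta_{Q,h_k} \geq -C\omega$ uniformly in $k$ by hypothesis $(ii)$, the line bundle $F\otimes Q^k$ carries a metric whose curvature is bounded below independently of $k$; combined with the Nadel vanishing theorem (or a Serre-type bound), one expects $\dim H^q\big(X,\mathcal{O}_X(K_X\otimes F\otimes Q^k)\otimes\mathcal{I}(h_F(h_k)^k)\big)$ to be $O(k^{n-q})$ — this is the standard estimate that the $q$-th cohomology of a big-type twisting family grows at most like $k^{n-q}$ (essentially because higher cohomology of an ample twist is controlled by lower-dimensional loci). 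Then the injection above, applied to a basis, would force $\dim H^q(X,\mathcal{O}_X(K_X\otimes F)\otimes\mathcal{I}(h_F)) \cdot (\text{something growing in } k)$ to stay bounded by $O(k^{n-q})$; the "something" comes from letting $s$ range over a $d_k$-dimensional space and checking the images remain independent, which I would set up by tensoring a fixed nonzero cohomology class with independent sections and using that multiplication by independent sections of $Q^k$ keeps classes independent after passing to $\mathcal{I}(h_F(h_k)^k)$ (here one needs $\mathcal{I}(h_F) \otimes \mathcal{I}((h_k)^k)$ to inject into $\mathcal{I}(h_F(h_k)^k)$, which follows from the subadditivity-type inclusion for multiplier ideals, or more directly from the Hölder inequality on local integrals). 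Comparing growth rates: if $H^q \neq 0$ then the left side grows at least like $d_k$, which by choice of $q$ can be arranged to exceed $Ck^{n-q}$ along a subsequence (since $d_k$ is not $O(k^{n-q})$ when $q \leq n - \kappa$, wait — one must be careful: actually $d_k$ being $o(k^{n-q+1})$ is what we know, and we need a contradiction with an $O(k^{n-q})$ bound, so the argument instead counts more cleverly, pairing the $\binom{d_k + \text{const}}{\cdot}$-type growth of products of sections against the polynomial bound).

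The main obstacle I anticipate is precisely this final growth comparison: making rigorous that a nonzero class in $H^q(X,\mathcal{O}_X(K_X\otimes F)\otimes\mathcal{I}(h_F))$, together with the $d_k$-dimensional space of bounded sections of $Q^k$, produces enough independent classes in the target to overwhelm its dimension bound. The clean way is to use that the composite
\[
H^q\big(X,\mathcal{O}_X(K_X\otimes F)\otimes\mathcal{I}(h_F)\big) \otimes H^0_{\mathrm{bdd},(h_k)^k}(X,Q^k) \longrightarrow H^q\big(X,\mathcal{O}_X(K_X\otimes F\otimes Q^k)\otimes\mathcal{I}(h_F(h_k)^k)\big)
\]
is injective when restricted to $\xi \otimes H^0_{\mathrm{bdd}}$ for any fixed nonzero $\xi$ — this reduces to Theorem \ref{cor:FM2016} applied section-by-section plus linear independence, which holds because the sections $s_1,\dots,s_{d_k}$ are linearly independent and cup product with a fixed class is $\mathcal{O}_X$-linear. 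Then $\dim H^q(\text{target}) \geq d_k$ whenever $H^q(\text{source}) \neq 0$, and I would invoke a uniform bound $\dim H^q(X,\mathcal{O}_X(K_X\otimes F\otimes Q^k)\otimes\mathcal{I}(h_F(h_k)^k)) = O(k^{\,n-q})$ coming from hypothesis $(ii)$ via a standard argument (regularize the metric, apply holomorphic Morse inequalities or a Riemann–Roch/vanishing estimate on the weakly positive part), contradicting $d_k / k^{n-q+1} \to 0$ failing to be sharp — more precisely, contradicting the definition of $\kappa_{\mathrm{bdd}}$, which for $q > n - \kappa_{\mathrm{bdd}}$ guarantees $\varlimsup d_k / k^{\,n-q} = +\infty$ is \emph{not} excluded, so in fact for $m = n-q < \kappa_{\mathrm{bdd}}$ we have $\varlimsup d_k/k^{m} > 0$, giving the contradiction with the $O(k^{n-q})$ bound once one notes the bound must actually be $o(k^{n-q})$ or handles the borderline via the strictness in hypothesis $(i)$. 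Tightening this last inequality chain — in particular deciding whether one uses $m = n-q$ or $m = n-q+1$ and getting the uniform cohomology bound with the right exponent — is where the real work lies; everything else is formal.
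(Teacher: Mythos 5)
Your overall strategy --- use Theorem \ref{cor:FM2016} to inject $H^0_{\mathrm{bdd},(h_k)^k}(X,Q^k)$ into $H^q\big(X,\mathcal{O}_X(K_X\otimes F\otimes Q^k)\otimes\mathcal{I}(h_F(h_k)^k)\big)$ via cup product with a hypothetical nonzero $\xi$, bound the target by $O(k^{n-q})$ using hypothesis $(ii)$, Nadel vanishing and a Matsumura-type asymptotic lemma, and then contradict the definition of $\kappa_{\mathrm{bdd}}$ --- is exactly the paper's strategy, and you correctly identify $b=\varepsilon_k/k$. However, two places in your writeup contain real gaps.

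First, the injectivity step is over-engineered and, as written, circular. You do not need subadditivity of multiplier ideals, H\"older estimates, or any separate independence argument. Theorem \ref{cor:FM2016} with $(M,h_M)=(Q^k,(h_k)^k)$ and $b=\varepsilon_k/k$ says that for each nonzero $s\in H^0_{\mathrm{bdd}}$ the map $\beta_s$ (cup with $s$) on the whole $H^q$ group is injective; in particular $\beta_s(\xi)\neq 0$ whenever $\xi\neq 0$. Applied to a difference $s-s'$, this says the \emph{linear} map $s\mapsto \xi\cup s$ has trivial kernel. That is the whole argument, and it immediately gives $\dim H^0_{\mathrm{bdd},(h_k)^k}(X,Q^k)\le\dim H^q(\text{target})$. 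Your phrasing ``which holds because the sections are linearly independent and cup product is $\mathcal{O}_X$-linear'' does not prove injectivity of a linear map; linearity alone never does. The injectivity is supplied by Theorem \ref{cor:FM2016}, not by independence of the $s_i$.

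Second, and more seriously, the final numerical comparison is left open and your suggested fix points in the wrong direction. ``The bound must actually be $o(k^{n-q})$'' is false and unnecessary, and ``handle the borderline via the strictness in $(i)$'' is a red herring --- hypothesis $(i)$ plays no role in the growth comparison. The correct, elementary resolution: $\kappa:=\kappa_{\mathrm{bdd}}$ is the supremum of a set $S\subset\mathbb{Z}$ that is bounded above (by $n$), hence $\kappa\in S$; moreover $S$ is a down-set in $\mathbb{Z}$ (if $\varlimsup d_k/k^m>0$ and $m'<m$ with $m,m'\in\mathbb{Z}$ then $\varlimsup d_k/k^{m'}=+\infty$). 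Since $q>n-\kappa$ with all quantities integers, $n-q+1\le\kappa$, so $n-q+1\in S$, i.e.\ $\varlimsup d_k/k^{n-q+1}>0$. But $d_k=O(k^{n-q})$ forces $\varlimsup d_k/k^{n-q+1}=0$ --- contradiction. So you take $m=n-q+1$, not $m=n-q$, and there is no borderline case to handle. Finally, the $O(k^{n-q})$ bound you invoke is exactly Lemma \ref{l:asymptotic estimate} (Matsumura's Lemma 4.3), whose hypothesis is verified by Nadel vanishing together with the uniform curvature lower bound in $(ii)$; you should cite that lemma rather than gesture at Morse inequalities or Riemann--Roch.
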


\begin{proof}
The proof is similar to that of Theorem 1.4 (2) in \cite{Matsumura2014}.

Theorem \ref{cor:vanishing-thm} holds trivially if $\kappa_{\mathrm{bdd}}(Q,\{h_k\}_{k=1}^{+\infty})\leq0$. Hence we assume that $\kappa_{\mathrm{bdd}}(Q,\{h_k\}_{k=1}^{+\infty})$ is positive.

For a contradiction, we assume that there exists a non-zero cohomology class $\xi\in H^q\big(X,\mathcal{O}_X(K_X\otimes F)\otimes\mathcal{I}(h_F)\big)$ for some $q>n-\kappa_{\mathrm{bdd}}(Q,\{h_k\}_{k=1}^{+\infty})$.

Let $k$ be a positive integer. Then, the following map induced by the tensor product with $\xi$
\[H^0_{\mathrm{bdd},(h_k)^k}(X,Q^k)\longrightarrow H^q\big(X,\mathcal{O}_X(K_X\otimes F\otimes Q^k)\otimes\mathcal{I}(h_F(h_k)^k)\big)\]
is a linear map.

Since $\sqrt{-1}\Theta_{F,h_F}\geq0$ and $\sqrt{-1}\Theta_{F,h_F}\geq\varepsilon_k\sqrt{-1}\Theta_{Q,h_k}$ hold on $X$, applying Theorem \ref{cor:FM2016} to the above linear map in the case when $(M,h_M):=(Q^k,(h_k)^k)$ and $b:=\frac{\varepsilon_k}{k}$, we get that the above linear map is injective by the assumption $\xi\neq0$.

Hence
\[\dim H^0_{\mathrm{bdd},(h_k)^k}(X,Q^k)\leq\dim H^q\big(X,\mathcal{O}_X(K_X\otimes F\otimes Q^k)\otimes\mathcal{I}(h_F(h_k)^k)\big).\]

By Nadel's vanishing theorem (see \cite{Nadel1990} or Theorem 5.11 in \cite{Demailly-book2010}) and the assumption $(ii)$ in Theorem \ref{cor:vanishing-thm}, it follows from Lemma \ref{l:asymptotic estimate} that
\[\dim H^q\big(X,\mathcal{O}_X(K_X\otimes F\otimes Q^k)\otimes\mathcal{I}(h_F(h_k)^k)\big)=O(k^{n-q})\quad\text{as}\quad k\rightarrow+\infty.\]
Hence
\[\dim H^0_{\mathrm{bdd},(h_k)^k}(X,Q^k)=O(k^{n-q})\quad\text{as}\quad k\rightarrow+\infty.\]

By the definition of the generalized Kodaira dimension, the above equality is a contradiction to the inequality $q>n-\kappa_{\mathrm{bdd}}(Q,\{h_k\}_{k=1}^{+\infty})$. Hence we get Theorem \ref{cor:vanishing-thm}.
\end{proof}
\smallskip

\begin{remark}
Theorem \ref{cor:vanishing-thm} contains the following two cases.

The first case is when $h_k=h_Q$ $(\forall k\in \mathbb{Z}^+)$ for some fixed singular metric $h_Q$ on $Q$. Then the curvature assumptions $(i)$ and $(ii)$ are equivalent to the assumption
\[\sqrt{-1}\Theta_{F,h_F}\geq\varepsilon\sqrt{-1}\Theta_{Q,h_Q}\geq0\text{ for some positive number }\varepsilon.\]
If $h_Q$ is further a smooth metric with strictly positive curvature, then Theorem \ref{cor:vanishing-thm} is just Nadel's vanishing theorem.

The second case is when $Q$ is numerically effective. Then there exists a sequence of smooth metrics $\{h_k\}_{k=1}^{+\infty}$ such that the curvature assumption $(ii)$ holds. In this case, $\kappa_{\mathrm{bdd}}(Q,\{h_k\}_{k=1}^{+\infty})$ is just the usual Kodaira dimension of $Q$.\qed
\end{remark}

\begin{remark}
In the case when $\varepsilon_k=1$ and $h_k=h_Q$ $(\forall k\in \mathbb{Z}^+)$ for some fixed smooth metric $h_Q$ on $Q$, Theorem \ref{cor:vanishing-thm} was proved in \cite{Matsumura2014} under the additional assumption that $h_F$ is smooth on some Zariski open subset of $X$ (see Theorem 1.4 (2) in \cite{Matsumura2014}).\qed
\end{remark}

\begin{remark}
Theorem \ref{cor:vanishing-thm} was proved in \cite{Matsumura2018} in the case when $Q=F$ and $h_k=h_F$ $(\forall k\in \mathbb{Z}^+)$ (see Theorem 4.5 in \cite{Matsumura2018}, and see also Theorem 1.4 (1) in \cite{Matsumura2014}, Theorem 5.2 in \cite{Matsumura2014}, Theorem 1.2 in \cite{Matsumura2015}).\qed
\end{remark}

\begin{remark}
By the vanishing theorem obtained in \cite{Cao2012} and the strong openness property of multiplier ideal sheaves obtained in \cite{Guan-Zhou2013c}, one can get
\[H^q\big(X,\mathcal{O}_X(K_X\otimes F)\otimes\mathcal{I}(h_F)\big)=0\quad\text{for}\quad q>n-\mathrm{nd}(F,h_F).\]
Here $\mathrm{nd}(F,h_F)$ is the numerical dimension defined in \cite{Tsuji2007} and \cite{Cao2012}, which depends on the pair $(F, h_F)$.
Here, $\kappa_{\mathrm{bdd}}(Q,\{h_k\}_{k=1}^{+\infty})$ rather than $\mathrm{nd}(F,h_F)$ is used in Theorem \ref{cor:vanishing-thm}. \qed
\end{remark}
\bigskip


\section{Some results used in the proofs}\label{section-lemmas}

In this section, we recall and obtain some results which will be used in the
proofs of the main results in the present paper.

\begin{lem}[Proposition 3.12 in \cite{Demailly2015}]\label{l:Demailly-non complete metric}
Let $X$ be a complete K\"{a}hler manifold equipped with a
(non necessarily complete) K\"{a}hler metric $\omega$, and let $(Q,h)$
be a holomorphic vector bundle over $ X $ equipped with a smooth Hermitian metric $h$. Assume that $\tau$ and $A$
are smooth and bounded positive functions on $ X $ and let
\[\mathrm{B}:=[\tau\sqrt{-1}\Theta_{Q,h}-\sqrt{-1}
\partial\bar\partial\tau
-\sqrt{-1}A^{-1}\partial\tau\wedge\bar\partial\tau,\Lambda ].\]
Assume that $\delta\geq0$ is a number such that $\mathrm{B}+\delta
\mathrm{I}$ is semi-positive definite everywhere on
$\wedge^{n,q}T^*_ X \otimes Q$ for some $q\geq 1$. Then given a form
$g\in L^2( X ,\wedge^{n,q}T^*_ X \otimes Q)$ such that $\bar\partial
g=0$ and
\[\int_ X \langle {(\mathrm{B}+\delta
\mathrm{I})}^{-1}g,g\rangle_{\omega,h} dV_{X,\omega}<+\infty,\]
there exists an
approximate solution $u\in L^2( X ,\wedge^{n,q-1}T^*_ X \otimes Q)$
and a correcting term $v\in L^2( X ,\wedge^{n,q}T^*_ X \otimes Q)$
such that $\bar\partial  u+\sqrt{\delta}v=g$ and
\[\int_ X \frac{|u|^2_{\omega,h}}{\tau+A} dV_{X,\omega}+\int_X|v|^2_{\omega,h} dV_{X,\omega}\leq \int_ X
\langle {(\mathrm{B}+\delta \mathrm{I})}^{-1}g,g\rangle_{\omega,h} dV_{X,\omega}.\]
\end{lem}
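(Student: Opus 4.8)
The plan is to combine the twisted Bochner-Kodaira-Nakano a priori inequality with a Hilbert-space duality argument in the spirit of H\"{o}rmander, Andreotti-Vesentini and Demailly, handling the possible incompleteness of $\omega$ by a routine regularization. First I would fix a complete K\"{a}hler metric $\hat\omega$ on $X$ (available since $X$ is complete K\"{a}hler) and set $\omega_\varepsilon:=\omega+\varepsilon\hat\omega$ for $\varepsilon>0$; each $\omega_\varepsilon$ is a complete K\"{a}hler metric and $\omega_\varepsilon\downarrow\omega$ as $\varepsilon\to0$. I would prove the estimate with $\omega$ replaced by $\omega_\varepsilon$, obtaining forms $u_\varepsilon,v_\varepsilon$, and then let $\varepsilon\to0$: the $\omega_\varepsilon$-estimates, together with the comparison properties of the $(n,q)$-form integrand $\langle(\mathrm B+\delta\mathrm I)^{-1}g,g\rangle_{\omega,h}\,dV_{X,\omega}$ under $\omega_\varepsilon\downarrow\omega$ (as in the proof of the Ohsawa-Takegoshi theorem), yield uniform bounds for $u_\varepsilon,v_\varepsilon$ in the original norms; weak $L^2$ limits $u,v$ then exist, satisfy $\bar\partial u+\sqrt{\delta}\,v=g$, and inherit the estimate. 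So it suffices to treat the case where $\omega$ is complete.

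The analytic core will be the twisted a priori inequality
\[\int_X\langle\mathrm B w,w\rangle_{\omega,h}\,dV_{X,\omega}\le\int_X(\tau+A)\,|\bar\partial^*w|^2_{\omega,h}\,dV_{X,\omega}+\int_X\tau\,|\bar\partial w|^2_{\omega,h}\,dV_{X,\omega},\]
valid for every $(n,q)$-form $w$ with values in $Q$ in $\mathrm{Dom}(\bar\partial)\cap\mathrm{Dom}(\bar\partial^*)$. For smooth compactly supported $w$ I would derive it from the classical Bochner-Kodaira-Nakano identity by Demailly's twisting procedure: multiplying by the weight $\tau$ and absorbing the first-order terms involving $\partial\tau$ through a Cauchy-Schwarz inequality with the auxiliary positive function $A$, which is precisely what generates the term $-\sqrt{-1}A^{-1}\partial\tau\wedge\bar\partial\tau$ inside $\mathrm B$. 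Since $\tau$ and $A$ are bounded and $\omega$ is complete, the inequality then extends to all $w\in\mathrm{Dom}(\bar\partial)\cap\mathrm{Dom}(\bar\partial^*)$ by the Andreotti-Vesentini density lemma. I expect this step, namely getting the twist right and justifying the integrations by parts on the complete manifold, to carry essentially all the substance; once it is in hand, everything else is formal.

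Finally I would set $T:=\bar\partial\colon L^2(X,\wedge^{n,q-1}T^*_X\otimes Q)\to L^2(X,\wedge^{n,q}T^*_X\otimes Q)$ and $S:=\bar\partial\colon L^2(X,\wedge^{n,q}T^*_X\otimes Q)\to L^2(X,\wedge^{n,q+1}T^*_X\otimes Q)$, so $ST=0$, and consider the closed operator $L(u,v):=Tu+\sqrt{\delta}\,v$ on $\mathrm{Dom}(T)\oplus L^2(X,\wedge^{n,q}T^*_X\otimes Q)$, the source being equipped with the weighted norm $\|(u,v)\|_*^2:=\int_X\frac{|u|^2_{\omega,h}}{\tau+A}\,dV_{X,\omega}+\int_X|v|^2_{\omega,h}\,dV_{X,\omega}$. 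A direct computation gives $L^*w=\big((\tau+A)\bar\partial^*w,\ \sqrt{\delta}\,w\big)$, hence $\|L^*w\|_*^2=\int_X(\tau+A)|\bar\partial^*w|^2\,dV_{X,\omega}+\delta\int_X|w|^2\,dV_{X,\omega}$. For $w\in\mathrm{Dom}(\bar\partial^*)$ I would decompose $w=w_1+w_2$ with $w_1\in\ker S$ and $w_2\in\overline{\mathrm{Im}\,S^*}\subset\ker\bar\partial^*$; from $\bar\partial g=0$ one gets $\langle g,w\rangle=\langle g,w_1\rangle$, $\bar\partial^*w=\bar\partial^*w_1$ and $\bar\partial w_1=0$. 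Pointwise Cauchy-Schwarz for the semipositive form $\mathrm B+\delta\mathrm I$, then Cauchy-Schwarz in the integral, gives $|\langle g,w\rangle|^2\le\big(\int_X\langle(\mathrm B+\delta\mathrm I)^{-1}g,g\rangle\,dV_{X,\omega}\big)\big(\int_X\langle(\mathrm B+\delta\mathrm I)w_1,w_1\rangle\,dV_{X,\omega}\big)$, and the twisted a priori inequality applied to $w_1$ (using $\bar\partial w_1=0$) bounds the second factor by $\|L^*w\|_*^2$. Thus $|\langle g,w\rangle|\le\big(\int_X\langle(\mathrm B+\delta\mathrm I)^{-1}g,g\rangle\,dV_{X,\omega}\big)^{1/2}\|L^*w\|_*$ for all $w\in\mathrm{Dom}(\bar\partial^*)$, and the usual Hahn-Banach duality argument produces $(u,v)$ with $L(u,v)=\bar\partial u+\sqrt{\delta}\,v=g$ and $\|(u,v)\|_*^2\le\int_X\langle(\mathrm B+\delta\mathrm I)^{-1}g,g\rangle\,dV_{X,\omega}$, which is exactly the required estimate.
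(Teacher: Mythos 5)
Your proposal is correct and follows essentially the same route as Demailly's own proof of Proposition 3.12 in \cite{Demailly2015}, which the paper cites without reproving. The three ingredients you identify — reduction to a complete metric via $\omega_\varepsilon=\omega+\varepsilon\hat\omega$ together with the monotonicity of $(n,q)$-form integrands, the twisted Bochner--Kodaira--Nakano a priori inequality extended by Andreotti--Vesentini density (using that $\tau$ and $A$ are bounded), and the duality argument built around the operator $L(u,v)=\bar\partial u+\sqrt{\delta}\,v$ on the weighted source space with the orthogonal splitting $w=w_1+w_2$ along $\ker\bar\partial$ — are precisely the ones in the standard proof.
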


\begin{lem}[Theorem 4.4.2 in \cite{Hormander}]\label{l:Hormander-dbar}
Let $\Omega$ be a pseudoconvex
open set in $\mathbb{C}^n$, and $\varphi$ be a plurisubharmonic
function on $\Omega$. For every $w\in L^2_{(p,q+1)}(\Omega,e^{-\varphi})$
with $\bar\partial w=0$ there is a solution $s\in
L^2_{(p,q)}(\Omega,\mathrm{loc})$ of the equation $\bar\partial s=w$
such that
\[\int_\Omega\frac{|s|^2}{(1+|z|^2)^2}e^{-\varphi}d\lambda\leq
\int_\Omega|w|^2e^{-\varphi}d\lambda,\]
where $d\lambda$ is the $2n$-dimensional Lebesgue measure on
$\mathbb{C}^n$.
\end{lem}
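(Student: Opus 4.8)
The plan is to establish the estimate first for a $C^\infty$ weight $\varphi$ and then pass to the general plurisubharmonic case by regularization. For smooth $\varphi$ the whole problem reduces to the basic weighted $L^2$ estimate for $\bar\partial$ applied not to $\varphi$ itself but to the modified weight
\[
\tilde\varphi:=\varphi+2\log(1+|z|^2),
\]
whose merit is that $2\log(1+|z|^2)$ is strictly plurisubharmonic with a completely explicit lower bound for its complex Hessian. Writing $\tilde\varphi_{j\bar k}:=\partial^2\tilde\varphi/\partial z_j\partial\bar z_k$, one computes $\partial\bar\partial\log(1+|z|^2)$ and applies the Cauchy--Schwarz inequality to $\langle\xi,z\rangle$ to get, using that $\varphi$ is plurisubharmonic,
\[
\sum_{j,k}\tilde\varphi_{j\bar k}(z)\,\xi_j\overline{\xi_k}\;\geq\;\frac{2}{(1+|z|^2)^2}\,|\xi|^2\qquad(z,\xi\in\mathbb{C}^n).
\]
Thus $\tilde\varphi$ is again plurisubharmonic, and in fact $\sqrt{-1}\partial\bar\partial\tilde\varphi$ dominates $c(z)$ times the flat metric, where $c(z):=2(1+|z|^2)^{-2}>0$. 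Two bookkeeping identities then do the rest: $e^{-\tilde\varphi}=e^{-\varphi}(1+|z|^2)^{-2}$ and $c(z)^{-1}e^{-\tilde\varphi}=\tfrac12\,e^{-\varphi}$, which is exactly how the factor $(1+|z|^2)^{-2}$ on the left-hand side and the clean weight $e^{-\varphi}$ on the right-hand side of the desired inequality arise.

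So assume $\varphi\in C^\infty(\Omega)$. For a test form $f\in\mathcal{D}_{(p,q+1)}(\Omega)$, the Bochner--Kodaira--H\"ormander identity expresses $\|\bar\partial f\|^2_{\tilde\varphi}+\|\vartheta_{\tilde\varphi}f\|^2_{\tilde\varphi}$ (norms and formal adjoint $\vartheta_{\tilde\varphi}$ taken with respect to $e^{-\tilde\varphi}$) as the sum of the curvature term $\int_\Omega\sum_{|I|=p,\,|K|=q}\sum_{j,k}\tilde\varphi_{j\bar k}\,f_{I,jK}\overline{f_{I,kK}}\,e^{-\tilde\varphi}\,d\lambda$ and a nonnegative term built from the $\bar\partial$-derivatives of the coefficients. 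Discarding the nonnegative term and inserting the Hessian bound above (each multi-index of length $q+1\ge1$ is counted $q+1$ times in the sum over $(j,K)$) gives the a priori inequality
\[
\|\bar\partial f\|^2_{\tilde\varphi}+\|\vartheta_{\tilde\varphi}f\|^2_{\tilde\varphi}\;\geq\;\int_\Omega c(z)\,|f|^2\,e^{-\tilde\varphi}\,d\lambda .
\]
Since $\Omega$ is pseudoconvex, $\mathcal{D}_{(p,q+1)}(\Omega)$ is dense, in the graph norm $f\mapsto\|f\|_{\tilde\varphi}+\|\bar\partial f\|_{\tilde\varphi}+\|\bar\partial^{*}_{\tilde\varphi}f\|_{\tilde\varphi}$, in $\mathrm{Dom}(\bar\partial)\cap\mathrm{Dom}(\bar\partial^{*}_{\tilde\varphi})$ (the classical approximation lemma, proved by cutting off against a smooth plurisubharmonic exhaustion of $\Omega$); hence the inequality holds, with $\vartheta_{\tilde\varphi}$ replaced by $\bar\partial^{*}_{\tilde\varphi}$, for every $f$ in that domain.

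The usual duality argument then produces the solution. Note $\|w\|^2_{\tilde\varphi}=\int_\Omega|w|^2 e^{-\varphi}(1+|z|^2)^{-2}d\lambda\le\int_\Omega|w|^2 e^{-\varphi}\,d\lambda<\infty$ and $\bar\partial w=0$. For $f\in\mathrm{Dom}(\bar\partial^{*}_{\tilde\varphi})$ I would split $f=f'+f''$ with $f'\in\ker\bar\partial$ and $f''$ in its orthogonal complement in $L^2_{(p,q+1)}(\Omega,e^{-\tilde\varphi})$; then $f''\in\ker\bar\partial^{*}_{\tilde\varphi}$, so $\bar\partial^{*}_{\tilde\varphi}f=\bar\partial^{*}_{\tilde\varphi}f'$ and $\langle f,w\rangle_{\tilde\varphi}=\langle f',w\rangle_{\tilde\varphi}$, and the a priori estimate applied to $f'$ (for which $\bar\partial f'=0$) together with Cauchy--Schwarz gives
\[
|\langle f,w\rangle_{\tilde\varphi}|^2\le\Bigl(\int_\Omega c(z)\,|f'|^2 e^{-\tilde\varphi}\Bigr)\Bigl(\int_\Omega c(z)^{-1}|w|^2 e^{-\tilde\varphi}\Bigr)\le\|\bar\partial^{*}_{\tilde\varphi}f\|^2_{\tilde\varphi}\cdot\tfrac12\!\int_\Omega|w|^2 e^{-\varphi}\,d\lambda .
\]
By Hahn--Banach and the Riesz representation theorem there is $s\in L^2_{(p,q)}(\Omega,e^{-\tilde\varphi})$ with $\langle\bar\partial^{*}_{\tilde\varphi}f,s\rangle_{\tilde\varphi}=\langle f,w\rangle_{\tilde\varphi}$ for all $f\in\mathrm{Dom}(\bar\partial^{*}_{\tilde\varphi})$ and $\|s\|^2_{\tilde\varphi}\le\tfrac12\int_\Omega|w|^2 e^{-\varphi}\,d\lambda$. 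The first relation means $\bar\partial s=w$ in the sense of currents (so $s\in L^2_{\mathrm{loc}}$ is a genuine solution), and unwinding $\|s\|^2_{\tilde\varphi}=\int_\Omega|s|^2(1+|z|^2)^{-2}e^{-\varphi}\,d\lambda$ yields the asserted estimate, in fact with the better constant $\tfrac12$.

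For general plurisubharmonic $\varphi$ I would exhaust $\Omega$ by relatively compact pseudoconvex open sets $\Omega_1\subset\subset\Omega_2\subset\subset\cdots$ with union $\Omega$ and choose smooth plurisubharmonic $\varphi_\nu$ near $\overline{\Omega_\nu}$ with $\varphi_\nu\downarrow\varphi$ on $\Omega_\nu$. The smooth case on each $\Omega_\nu$ produces $s_\nu$ with $\bar\partial s_\nu=w$ on $\Omega_\nu$ and $\int_{\Omega_\nu}|s_\nu|^2(1+|z|^2)^{-2}e^{-\varphi_\nu}d\lambda\le\int_{\Omega_\nu}|w|^2 e^{-\varphi_\nu}d\lambda\le\int_\Omega|w|^2 e^{-\varphi}d\lambda$; on a fixed $\Omega_\mu$ the tail $(s_\nu)_{\nu\ge\mu}$ is bounded in $L^2(\Omega_\mu)$, so a diagonal argument extracts a subsequence converging weakly in $L^2_{\mathrm{loc}}(\Omega)$ to some $s$ with $\bar\partial s=w$, and weak lower semicontinuity of the norms together with monotone convergence $e^{-\varphi_\nu}\uparrow e^{-\varphi}$ gives the estimate for $s$. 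I expect the genuine obstacle to be the density lemma used above on a possibly unbounded, non-smooth pseudoconvex $\Omega$: this is the one place where pseudoconvexity enters essentially, and it requires carefully cutting off against an exhaustion (equivalently, passing to a complete K\"ahler metric) so as to control the resulting errors simultaneously in $\bar\partial$ and in $\bar\partial^{*}_{\tilde\varphi}$; by contrast the Hessian bound is an explicit computation and the regularization and weak-limit steps are routine.
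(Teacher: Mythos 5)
The paper quotes this result verbatim from H\"ormander's book (Theorem~4.4.2) and gives no proof of its own, so there is no internal argument to compare against; what you have produced is a reconstruction of the standard H\"ormander--Andreotti--Vesentini proof. Your argument is correct: shifting the weight to $\tilde\varphi=\varphi+2\log(1+|z|^2)$ injects the explicit lower bound $\sqrt{-1}\partial\bar\partial\tilde\varphi\ge 2(1+|z|^2)^{-2}\,\mathrm{Id}$ on the Hessian, the a~priori inequality then follows from the Bochner--Kodaira--H\"ormander identity on test forms, the functional-analytic duality step (splitting $f=f'+f''$, with $f''\perp\ker\bar\partial\supset\mathrm{ran}\,\bar\partial$ and hence $f''\in\ker\bar\partial^*_{\tilde\varphi}$) is sound, and the passage from smooth to arbitrary psh $\varphi$ by inner exhaustion, mollification, weak limits and monotone convergence is handled correctly. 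Your computation in fact gives the slightly sharper constant $\tfrac12$ in the final estimate, which is harmless.

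The one place worth underlining is precisely the one you flag: the density of $\mathcal{D}_{(p,q+1)}(\Omega)$ in $\mathrm{Dom}(\bar\partial)\cap\mathrm{Dom}(\bar\partial^*_{\tilde\varphi})$ for the graph norm. For a single fixed weight on an unbounded pseudoconvex $\Omega$ this is not automatic: cutting off against an exhaustion produces an error of order $|\bar\partial\eta_\nu|\,|f|$ which a single weight cannot always absorb. H\"ormander's own derivation of Theorem~4.4.2 therefore routes through the three-weight framework (Lemma~4.1.3 and Theorem~4.4.1), with $|\bar\partial\eta_\nu|^2\le e^{\varphi_{k+1}-\varphi_k}$ built in. The modern shortcut you indicate --- replace the flat metric by a complete K\"ahler metric on $\Omega$, for which density holds in any $L^2$ weight --- also works, but then the curvature--versus--metric comparison is taken in the new metric and one passes back to the Euclidean statement at the end via a comparison of $(n,q)$-norms; this is the Demailly/Ohsawa presentation. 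Either route closes the gap you correctly identify as the nontrivial input of pseudoconvexity; it is just worth being explicit that a single-weight density lemma is not what H\"ormander's book states directly.
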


\begin{lem}[Theorem 1.5 in \cite{Demailly82}]\label{l:Demailly1982-complete-metric}
Let $X$ be a K\"{a}hler
manifold, and $Z$ be an analytic subset of $X$. Assume that $\Omega$
is a relatively compact open subset of $X$ possessing a complete
K\"{a}hler metric. Then $\Omega\setminus Z$ carries a complete
K\"{a}hler metric.
\end{lem}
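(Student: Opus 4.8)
The plan is to construct a complete K\"ahler metric on $\Omega\setminus Z$ by adding to a complete K\"ahler metric on $\Omega$ the complex Hessian of a globally defined quasi-plurisubharmonic potential that decreases to $-\infty$ along $Z$, thereby forcing every curve approaching $Z$ to have infinite length. Fix a K\"ahler metric $\omega$ on $X$ and a complete K\"ahler metric $\widehat\omega$ on $\Omega$. Since $\Omega$ is relatively compact, $K:=Z\cap\overline\Omega$ is compact, so it is covered by finitely many relatively compact coordinate charts $U_1,\dots,U_m\subset\subset X$ on each of which $Z$ is the common zero set of holomorphic functions $g_{j,1},\dots,g_{j,N_j}$; after rescaling we may assume $\lambda_j:=\sum_k|g_{j,k}|^2\le e^{-1}$ on $U_j$. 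The aim is to produce a function $\varphi$ on $\Omega$, smooth on $\Omega\setminus Z$, with $\varphi\to-\infty$ along $Z\cap\Omega$, such that $i\partial\bar\partial\varphi\ge-C\omega$ on $\Omega$ and, on a neighbourhood of $Z$, $i\partial\bar\partial\varphi+C\omega$ dominates a positive form built from the local data; then $\omega':=\widehat\omega+(C+1)\omega+i\partial\bar\partial\varphi$ will be the metric we want.

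First I would treat the local model. On $U_j$ set $\psi_j:=\log\lambda_j$, a plurisubharmonic function with $\psi_j\le-1$, and $\varphi_j:=-\log(-\psi_j)$. Since $t\mapsto-\log(-t)$ is convex and increasing on $(-\infty,0)$, $\varphi_j$ is plurisubharmonic on $U_j$, is $\le 0$, is smooth on $U_j\setminus Z$, and tends to $-\infty$ along $Z$; moreover a direct computation gives the self-bounded-gradient identity
\[i\partial\bar\partial\varphi_j=\frac{i\partial\bar\partial\psi_j}{-\psi_j}+i\partial\varphi_j\wedge\bar\partial\varphi_j\ \ge\ i\partial\varphi_j\wedge\bar\partial\varphi_j.\]
Next I would patch. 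Choose a partition of unity $\{\theta_j\}_{j=1}^m$ with $\operatorname{supp}\theta_j\subset U_j$ and $\sum_j\theta_j\equiv1$ on a neighbourhood $W$ of $K$, taken of the form $\theta_j=\sigma_j^2$ with $\sigma_j$ smooth, so that $i\partial\theta_j\wedge\bar\partial\theta_j/\theta_j=4\,i\partial\sigma_j\wedge\bar\partial\sigma_j$ is bounded; set $\varphi:=\sum_j\theta_j\varphi_j$ (equal to $0$ off $\bigcup\operatorname{supp}\theta_j$, hence smooth on $\Omega\setminus Z$). On $W$ we have $\sum\partial\theta_j=\sum i\partial\bar\partial\theta_j=0$, so
\[i\partial\bar\partial\varphi=\sum_j\theta_j\,i\partial\bar\partial\varphi_j+\sum_j(\varphi_j-\varphi_{j_0})\,i\partial\bar\partial\theta_j+\sum_j2\,\mathrm{Re}\big(i\partial\theta_j\wedge\bar\partial\varphi_j\big),\]
where locally $j_0$ is chosen with $\theta_{j_0}>0$. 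The first sum dominates $\sum_j\theta_j\,i\partial\varphi_j\wedge\bar\partial\varphi_j$ by the local model. The second sum is bounded because the differences $\varphi_j-\varphi_k$ are bounded near $Z$: by the \L ojasiewicz inequality $\operatorname{dist}(\cdot,Z)^{N}\lesssim\lambda_j^{1/2}\lesssim\operatorname{dist}(\cdot,Z)$, so $-\psi_j\asymp|\log\operatorname{dist}(\cdot,Z)|$ up to bounded factors, and the outer logarithm turns this multiplicative ambiguity into a bounded additive one. The third sum is handled by Cauchy--Schwarz, $2\,\mathrm{Re}(i\partial\theta_j\wedge\bar\partial\varphi_j)\ge-\tfrac{\theta_j}{2}\,i\partial\varphi_j\wedge\bar\partial\varphi_j-\tfrac{2}{\theta_j}\,i\partial\theta_j\wedge\bar\partial\theta_j$, whose last term is bounded by the choice $\theta_j=\sigma_j^2$. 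Summing, $i\partial\bar\partial\varphi\ge\tfrac12\sum_j\theta_j\,i\partial\varphi_j\wedge\bar\partial\varphi_j-C\omega$ on $W$; off $W$, $\varphi$ is a smooth function of bounded Hessian on a relatively compact set, so $i\partial\bar\partial\varphi\ge-C\omega$ globally after enlarging $C$.

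It then remains to check that $\omega'=\widehat\omega+(C+1)\omega+i\partial\bar\partial\varphi$ works. It is $d$-closed (each summand is) and satisfies $\omega'\ge\omega>0$, so it is a K\"ahler metric on $\Omega\setminus Z$. Since $\omega'\ge\widehat\omega$, any curve of finite $\omega'$-length stays in a compact subset of $\Omega$ and hence can only accumulate on $Z\cap\Omega$; but near a point $p\in Z\cap\Omega$ one has $\omega'\ge\tfrac{c}{2}\,i\partial\varphi_{j_0}\wedge\bar\partial\varphi_{j_0}$ with $\theta_{j_0}\ge c>0$ there, and $\varphi_{j_0}\to-\infty$ at $p$, so the standard estimate $\operatorname{length}_{\omega'}(\gamma)\ge\mathrm{const}\cdot|\varphi_{j_0}(\gamma(1))-\varphi_{j_0}(\gamma(0))|$ shows the curve cannot reach $Z$ either. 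By the Hopf--Rinow theorem, $(\Omega\setminus Z,\omega')$ is complete.

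The main obstacle is the patching step: a naive partition-of-unity sum of the $\varphi_j$ a priori destroys plurisubharmonicity through the cross terms $i\partial\theta_j\wedge\bar\partial\varphi_j$, which blow up near $Z$ because $|\partial\varphi_j|\to\infty$ there. The two points that rescue the argument are (i) the boundedness of $\varphi_j-\varphi_k$ near $Z$ (a \L ojasiewicz phenomenon, and the reason one must use the \emph{double} logarithm rather than $\log\lambda_j$), which tames the $i\partial\bar\partial\theta_j$ terms, and (ii) building the partition of unity from squares, which keeps $i\partial\theta_j\wedge\bar\partial\theta_j/\theta_j$ bounded and lets Cauchy--Schwarz absorb the remaining cross terms into the positive self-bounded-gradient terms.
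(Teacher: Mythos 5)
Your proof is correct and follows essentially the same route as Demailly's original argument (Théorème 1.5 of \cite{Demailly82}, which the paper cites rather than reproves): build local plurisubharmonic potentials $-\log(-\log\lambda_j)$ with the self-bounded-gradient property, patch with a partition of unity of squares so the cross terms are absorbable, use the \L ojasiewicz inequality to tame the $i\partial\bar\partial\theta_j$ terms, and add the result to a complete metric so that $Z$ is pushed to infinite distance. The only point worth spelling out a bit more carefully is the last completeness step: a curve of finite $\omega'$-length has finite $\widehat\omega$-length, hence by completeness of $\widehat\omega$ it converges to some $p\in\Omega$; one then fixes $j_0$ with $\theta_{j_0}(p)>0$ and applies the length estimate for $\varphi_{j_0}$ on a neighbourhood of $p$ where $\theta_{j_0}\ge c$, ruling out $p\in Z$.
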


\begin{lem}[Lemma 6.9 in \cite{Demailly82}]\label{l:extension}
Let $\Omega$ be an open subset of
$\mathbb{C}^n$ and $Z$ be a complex analytic subset of $\Omega$.
Assume that $u$ is a $(p,q-1)$-form with $L^2_{\mathrm{loc}}$
coefficients and $g$ is a $(p,q)$-form with $L^1_{\mathrm{loc}}$
coefficients such that $\bar\partial u=g$ on $\Omega\setminus Z$ (in the sense of currents). Then $\bar\partial u=g$ on
$\Omega$.
\end{lem}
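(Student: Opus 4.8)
The plan is to run the classical removable-singularity argument for the $\bar\partial$-equation: cut off near $Z$, apply the equation where it is already known to hold, and let the cutoff increase to $1$ while controlling the error term produced by differentiating it. The statement $\bar\partial u=g$ on $\Omega$ is local, so I may shrink $\Omega$ to a small ball on which $Z$ is contained in the zero set $\{h=0\}$ of some $h\in\mathcal{O}(\Omega)$ with $h\not\equiv0$ (if $Z$ fails to be nowhere dense on some connected component, the hypothesis is vacuous there and nothing is asserted). Since $L^2_{\mathrm{loc}}\subset L^1_{\mathrm{loc}}$, both $u$ and $g$ have $L^1_{\mathrm{loc}}$ coefficients and hence define currents on $\Omega$; by hypothesis the current $\bar\partial u-g$ vanishes on $\Omega\setminus Z$, i.e.\ is supported on $Z$, and the goal is to show it vanishes on all of $\Omega$.

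I next introduce the standard logarithmic cutoffs. For $N>0$ choose $\chi_N\in C^\infty(\mathbb{R})$ with $0\le\chi_N\le1$, $\chi_N\equiv1$ on $(-\infty,N]$, $\chi_N\equiv0$ on $[2N,+\infty)$ and $|\chi_N'|\le C/N$, and set $\theta_N:=\chi_N(-\log|h|^2)$. Then $\theta_N\in C^\infty(\Omega)$, $0\le\theta_N\le1$, $\theta_N$ vanishes in a neighborhood of $\{h=0\}\supset Z$, and $\theta_N\to1$ pointwise on $\{h\neq0\}$, hence almost everywhere on $\Omega$. Moreover $\bar\partial\theta_N=-\chi_N'(-\log|h|^2)\,\bar\partial\log|h|^2$ is supported in the shrinking logarithmic shell $\{N\le-\log|h|^2\le2N\}$, and the standard mass estimate for $\bar\partial\log|h|^2$ on such shells (reflecting that $\{h=0\}$ has real codimension $2$) gives, for every compact $K\subset\Omega$ and the flat metric,
\[\int_K|\bar\partial\theta_N|^2\,d\lambda\le\frac{C_K}{N}\longrightarrow0\qquad(N\to\infty).\]

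Now fix a smooth test form $\phi$ of the appropriate bidegree with compact support $K\subset\Omega$. Since $\theta_N$ vanishes near $Z$, the form $\theta_N\phi$ has compact support contained in $\Omega\setminus Z$, so the hypothesis applied to $\theta_N\phi$ gives (with the usual sign convention for $\bar\partial$ of a current) $\langle u,\bar\partial(\theta_N\phi)\rangle=\pm\langle g,\theta_N\phi\rangle$. Expanding $\bar\partial(\theta_N\phi)=\bar\partial\theta_N\wedge\phi+\theta_N\,\bar\partial\phi$ yields
\[\langle u,\bar\partial\theta_N\wedge\phi\rangle+\langle u,\theta_N\,\bar\partial\phi\rangle=\pm\langle g,\theta_N\phi\rangle.\]
Let $N\to\infty$. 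Since $0\le\theta_N\le1$, $\theta_N\to1$ a.e., and $u,g\in L^1_{\mathrm{loc}}$, dominated convergence gives $\langle u,\theta_N\,\bar\partial\phi\rangle\to\langle u,\bar\partial\phi\rangle$ and $\langle g,\theta_N\phi\rangle\to\langle g,\phi\rangle$. For the remaining term, Cauchy--Schwarz on $K$ gives $|\langle u,\bar\partial\theta_N\wedge\phi\rangle|\le C_\phi\,\|u\|_{L^2(K)}\,\|\bar\partial\theta_N\|_{L^2(K)}\to0$ by the cutoff estimate above; this is exactly the place where the hypothesis $u\in L^2_{\mathrm{loc}}$ (rather than merely $L^1_{\mathrm{loc}}$) is used. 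Passing to the limit we get $\langle u,\bar\partial\phi\rangle=\pm\langle g,\phi\rangle$ for every test form $\phi$ on $\Omega$, i.e.\ $\bar\partial u=g$ on $\Omega$.

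The main obstacle is the $L^2$-estimate for the cutoffs, i.e.\ the fact that the analytic set $Z$, being of complex codimension at least one, is $W^{1,2}$-negligible: there exist $\theta_N\to1$ with $\|\bar\partial\theta_N\|_{L^2(K)}\to0$. Concretely this reduces to the bound $\int_{\{N\le-\log|h|^2\le2N\}}|\bar\partial\log|h|^2|^2\,d\lambda=O(N)$ on compact sets, which is elementary when $h$ is a coordinate function and follows in general from the divisorial structure of $\{h=0\}$ (or from a local desingularization, or from potential-theoretic estimates for $\log|h|$). Everything else — localization, dominated convergence, Cauchy--Schwarz — is routine.
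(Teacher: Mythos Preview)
The paper does not supply a proof of this lemma; it is simply quoted from \cite{Demailly82} and used as a black box. Your argument is the standard one and is correct: logarithmic cutoffs $\theta_N$ vanishing near $Z$, dominated convergence for the terms involving $g$ and $u$, and Cauchy--Schwarz together with $\|\bar\partial\theta_N\|_{L^2(K)}\to0$ to kill the error term $\langle u,\bar\partial\theta_N\wedge\phi\rangle$.

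One comment on the last paragraph. Your reduction to $Z\subset\{h=0\}$ for a single nonzero $h\in\mathcal{O}(\Omega)$ is fine, but the claimed $O(N)$ bound for $\int_{K\cap\{N\le-\log|h|^2\le2N\}}|\bar\partial\log|h|^2|^2\,d\lambda$ is immediate only when $\{h=0\}$ is smooth (i.e.\ $dh\ne0$ along it), where one can take $h$ as a coordinate. For general $h$ the singular locus of $\{h=0\}$ may interfere. The cleanest way around this, and the route taken in Demailly's original, is a stratification/induction argument: prove the result first for $Z$ a smooth submanifold of real codimension $\ge2$ via the coordinate computation, then for an arbitrary analytic $Z$ observe that $Z_{\mathrm{reg}}$ is a smooth submanifold and $Z_{\mathrm{sing}}\subsetneq Z$ is an analytic set of strictly lower dimension, so one removes $Z_{\mathrm{reg}}$ first and then inducts. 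Your appeal to ``local desingularization'' achieves the same thing but is heavier than necessary.
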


\begin{lem}[The open mapping theorem, cf. \cite{Rudin1991}]\label{l:open mapping theorem}
Let $\mathrm{T}:F_1\longrightarrow F_2$ be a linear map between
Fr\'{e}chet spaces $F_1$ and $F_2$. If $\mathrm{T}$ is continuous and surjective, then $\mathrm{T}$ is open.
\end{lem}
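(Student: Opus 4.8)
The plan is to run the classical Baire-category proof of the open mapping theorem, in which the completeness of the domain $F_1$ does the essential work. Fix complete translation-invariant metrics on $F_1$ and $F_2$ compatible with their topologies (every metrizable topological vector space admits a translation-invariant compatible metric, by Birkhoff--Kakutani), and for $r>0$ let $B_r\subset F_1$ denote the open ball of radius $r$ about $0$; then $\{B_r\}_{r>0}$ is a base of symmetric neighbourhoods of $0$ with $B_r+B_s\subset B_{r+s}$, $\bigcap_{r>0}B_r=\{0\}$, and $\bigcup_{n\ge1}nB_r=F_1$ for each $r$ (the last because $\tfrac1n x\to0$ for every $x$, by continuity of scalar multiplication). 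Since $T$ is linear, it suffices to show that $T(V)$ is a neighbourhood of $0$ in $F_2$ whenever $V$ is a neighbourhood of $0$ in $F_1$: granting this, for any open $U\subset F_1$ and $x\in U$ the set $T(U)-Tx=T(U-x)$ contains a neighbourhood of $0$, so $T(U)$ is a neighbourhood of $Tx$, and hence $T(U)$ is open.

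The first step is the Baire argument, which yields the conclusion only up to a closure. By surjectivity and $\bigcup_{n\ge1}nB_r=F_1$ we have $F_2=T(F_1)=\bigcup_{n\ge1}nT(B_r)\subset\bigcup_{n\ge1}n\,\overline{T(B_r)}$, and since $F_2$ is completely metrizable it is a Baire space, so some $n\,\overline{T(B_r)}$, hence $\overline{T(B_r)}$ itself, has non-empty interior; say $G\subset\overline{T(B_r)}$ is open and $y_0\in G$. Using that $\overline{T(B_r)}$ is symmetric and that $\overline{A}+\overline{B}\subset\overline{A+B}$ (continuity of addition), the neighbourhood $G-y_0$ of $0$ satisfies
\[G-y_0\subset\overline{T(B_r)}-\overline{T(B_r)}\subset\overline{T(B_r)+T(B_r)}=\overline{T(B_r+B_r)}\subset\overline{T(B_{2r})}.\]
Thus $\overline{T(B_\rho)}$ is a neighbourhood of $0$ in $F_2$ for every $\rho>0$.

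The main step --- and the point that genuinely needs completeness of $F_1$, i.e.\ the Fr\'{e}chet hypothesis --- is to remove the closure, namely to prove $\overline{T(B_r)}\subset T(B_{3r})$. Given $y\in\overline{T(B_r)}$, I would build recursively points $x_0\in B_r$ and $x_k\in B_{r/2^k}$ for $k\ge1$ with $y-\sum_{j=0}^{k}Tx_j\in\overline{T(B_{r/2^{k+1}})}$: the first choice uses that $\overline{T(B_{r/2})}$ is a neighbourhood of $0$ (so it meets $y-T(B_r)$), and the inductive step uses that $\overline{T(B_{r/2^{k+1}})}$ is a neighbourhood of $0$ together with $y-\sum_{j<k}Tx_j\in\overline{T(B_{r/2^{k}})}$. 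The partial sums $\sigma_k=\sum_{j=0}^k x_j$ are Cauchy, because the metric is translation-invariant and subadditive while $d(x_0,0)+\sum_{k\ge1}d(x_k,0)<r+\sum_{k\ge1}r/2^k=2r$; by completeness of $F_1$ they converge to some $x$ with $d(x,0)\le2r$, so $x\in B_{3r}$. Continuity of $T$ gives $T\sigma_k\to Tx$, while $y-T\sigma_k\in\overline{T(B_{r/2^{k+1}})}$ tends to $0$ --- here one uses that $T$ is continuous, so $T(B_\rho)$ and hence $\overline{T(B_\rho)}$ lies in any prescribed closed neighbourhood of $0$ in $F_2$ once $\rho$ is small, and closed neighbourhoods of $0$ form a base. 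Since $F_2$ is Hausdorff, $Tx=y$, proving $\overline{T(B_r)}\subset T(B_{3r})$.

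Putting the two steps together, $T(B_{3r})\supset\overline{T(B_r)}$ is a neighbourhood of $0$ in $F_2$ for every $r>0$; as every neighbourhood $V$ of $0$ in $F_1$ contains some $B_{3r}$, the image $T(V)$ is a neighbourhood of $0$, and by the reduction in the first paragraph $T$ is open. The only point requiring real care is the bookkeeping of radii in the recursion, arranged so that the Cauchy limit lands in a controlled ball $B_{3r}$; the conceptual content is simply the interaction between the Baire property of the target $F_2$ (first step) and the completeness of the source $F_1$ (second step), the latter being exactly why the Fr\'{e}chet hypothesis is indispensable.
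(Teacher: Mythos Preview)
Your argument is correct: it is the standard Baire-category proof of the open mapping theorem, and the bookkeeping (symmetry of balls via a translation-invariant metric, the recursion with radii $r/2^k$, the Cauchy estimate landing the limit in $B_{3r}$, and the shrinking of $\overline{T(B_{r/2^{k+1}})}$ to $0$ via continuity of $T$ and regularity of $F_2$) is handled carefully. The paper itself gives no proof of this lemma---it is simply quoted from Rudin's \emph{Functional Analysis} as a classical fact---so there is nothing to compare your approach against; you have supplied exactly the argument the citation points to.
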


\begin{lem}[Theorem 2 of Section D in Chapter II of \cite{Gunning-Rossi}]\label{l:local generators uniform estimate}
Let $U$ be an open neighborhood of the origin $0$ in $\mathbb{C}^n$, and let $G_1,\cdots,G_k$ be holomorphic functions on $U$. Denote by $\mathcal{O}_K$ the ring of germs of holomorphic functions on the set $K$ for any closed set $K\subset \mathbb{C}^n$, and denote by $\mathcal{G}$ the ideal of $\mathcal{O}_0$ generated by the germs of $G_1,\cdots,G_k$ at $0$. Then
there exists an open neighborhood $V\subset\subset U$ of $0$ and a positive number $C$, such that every $F\in\mathcal{O}_{\overline{V}}$ whose germ at $0$ belongs to $\mathcal{G}$ can be written in the form
\[F=\sum\limits_{j=1}^k a_jG_j\text{ as germs on }\overline{V},\]
where $a_j\in\mathcal{O}_{\overline{V}}$ and $\sup\limits_{\overline{V}}|a_j|\leq C\sup\limits_{\overline{V}}|F|$.
\end{lem}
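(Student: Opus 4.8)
The plan is to prove this classical statement by induction on the number $n$ of variables, using the Weierstrass preparation and division theorems together with the explicit Cauchy-integral bounds that accompany them; the induction step naturally reduces to a representation problem for a finitely generated module over the ring of germs in $n-1$ variables, so one carries along a module version of the assertion simultaneously.

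First I would dispose of the degenerate cases. If every $G_j$ vanishes identically near $0$, then any $F$ with germ in $\mathcal{G}$ vanishes near $0$ and the claim is trivial. If $\mathcal{G}=\mathcal{O}_0$, write $1=\sum_j b_jG_j$ on a neighborhood of a small closed polydisc $\overline V\subset\subset U$ and put $a_j:=Fb_j$, so that $\sup_{\overline V}|a_j|\le\big(\max_l\sup_{\overline V}|b_l|\big)\sup_{\overline V}|F|$. Otherwise $\mathcal{G}$ lies in the maximal ideal; after a generic $\mathbb{C}$-linear change of coordinates we may assume one generator, say $G_1$, is regular of some order $d$ in $z_n$, and by Weierstrass preparation we may replace $G_1$ by a Weierstrass polynomial $W=W(z',z_n)$ of degree $d$ in $z_n$ (the unit factor and its inverse are holomorphic and bounded near the polydisc we end up with, hence absorbable into the final constant); here $z'=(z_1,\dots,z_{n-1})$.

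Now fix a closed polydisc $\Delta=\overline{\Delta'}\times\overline{\Delta_n}\subset\subset U$ with $\Delta_n=\{|z_n|<\rho\}$, small enough that $|W(z',\zeta)|\ge c>0$ for $z'\in\overline{\Delta'}$, $|\zeta|=\rho$. Given $F$ holomorphic near $\Delta$ with $\mathrm{germ}_0F\in\mathcal{G}$, Weierstrass division gives $F=qW+r$ with $q$ holomorphic near $\Delta$ and $r=\sum_{i=0}^{d-1}r_i(z')z_n^i$ a polynomial of degree $<d$ in $z_n$ with coefficients holomorphic near $\overline{\Delta'}$; the Cauchy-integral formulas for $r$ and for $q=(F-r)/W$ yield $\sup_\Delta|r|\le C_1\sup_\Delta|F|$ and $\sup_{\Delta''}|q|\le C_1\sup_\Delta|F|$ on the polydisc $\Delta''=\overline{\Delta'}\times\{|z_n|\le\rho/2\}$. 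Since $F,W\in\mathcal{G}$ as germs, $r=F-qW\in\mathcal{G}$. The set $\mathcal{M}$ of germs in $\mathcal{G}$ that are polynomials of degree $<d$ in $z_n$ is a submodule of the free module $\bigoplus_{i<d}\mathcal{O}_0^{(n-1)}z_n^i$ over the Noetherian ring $\mathcal{O}_0^{(n-1)}$, hence finitely generated; choose generators $H_1,\dots,H_m$, polynomials of degree $<d$ in $z_n$, and write $H_l=\sum_j c_{lj}G_j$ with $c_{lj}\in\mathcal{O}_0$. Applying the inductive module hypothesis in $n-1$ variables to $H_1,\dots,H_m$ gives a fixed closed polydisc $\overline{V'}\subset\mathbb{C}^{n-1}$ and a constant $C_2$ with $r=\sum_l b_l(z')H_l$ and $\sup_{\overline{V'}}|b_l|\le C_2\sup_{\overline{V'}}|r|$. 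Substituting and regrouping, $F=\big(q+\sum_l b_lc_{l1}\big)G_1+\sum_{j\ge2}\big(\sum_l b_lc_{lj}\big)G_j$ on a polydisc $\overline V\subset\overline{V'}\times\{|z_n|\le\rho/2\}$; composing the three estimates and absorbing the sup-norms of the finitely many fixed functions $c_{lj}$ (and the Weierstrass unit) gives $\sup_{\overline V}|a_j|\le C\sup_{\overline V}|F|$.

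The main obstacle is the bookkeeping rather than any single hard estimate: one must set up the induction in the module form, check it is preserved by the step above (which for a general module, as opposed to an ideal, needs a mild variant of the regular-element reduction), verify that only finitely many shrinkings of the polydisc occur so that a nondegenerate $\overline V\subset\subset U$ survives, and confirm that the generators $H_l$ of $\mathcal{M}$, their expansions $c_{lj}$, and all intermediate functions are holomorphic on a common polydisc --- all routine once the effective Weierstrass estimates are in hand, since only finitely many ideals and modules in finitely many variables enter. As an alternative for the qualitative part, the existence of \emph{some} $\overline V$ and \emph{some} representation $F=\sum_j a_jG_j$ (without the bound) follows from Oka's coherence theorem for $\mathcal{O}_U$; the uniform bound can then be extracted by applying the open mapping theorem (Lemma~\ref{l:open mapping theorem}) to the surjection $\mathcal{O}(W)^k\longrightarrow H^0(W,\mathcal{J})$ of Fr\'echet spaces for a Stein polydisc $\overline V\subset W\subset\subset U$ with $\mathcal{J}=(G_1,\dots,G_k)\mathcal{O}_W$, although this route yields the estimate a priori with a larger compact set on the right-hand side and a further argument is needed to return to the single polydisc $\overline V$.
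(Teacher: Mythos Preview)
The paper does not give its own proof of this lemma; it is quoted verbatim as Theorem~2 of Section~D, Chapter~II of Gunning--Rossi and used as a black box throughout Section~\ref{section-lemmas} and the later arguments. Your sketch is precisely the classical Gunning--Rossi proof: induction on $n$ via Weierstrass preparation and division with explicit Cauchy-integral bounds, passing the remainder $r$ (a polynomial of degree $<d$ in $z_n$) to a finitely generated $\mathcal{O}_0^{(n-1)}$-submodule, and invoking the module form of the inductive hypothesis. The outline is correct and faithful to the source; the caveats you list (carrying the module statement through the induction, bounding the number of polydisc shrinkings, and keeping all auxiliary data holomorphic on a common polydisc) are exactly the bookkeeping that Gunning--Rossi carry out in detail.

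Your alternative route via Oka coherence and the open mapping theorem is also sound in spirit, and indeed the paper uses the combination of this lemma with Lemma~\ref{l:open mapping theorem} in a similar way later on (e.g.\ in establishing the continuity of $\tilde\iota$ before Lemma~\ref{l:topological Dolbeault isomorphism-h}); but as you note, that approach naturally gives the bound with a slightly larger compact on the right, so for the sharp single-polydisc form stated here the Weierstrass argument is cleaner.
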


\begin{lem}[Lemma 4.3 in \cite{Matsumura2014}]\label{l:asymptotic estimate}
Let $X$ be a complex $n$-dimensional projective manifold, and $Q$ be a holomorphic line bundle on $X$. Let $\mathcal{G}$ be a coherent analytic sheaf on $X$, and $\{\mathcal{I}_k\}_{k=1}^{+\infty}$ be ideal sheaves on $X$ such that there exists a very ample line bundle $A$ on $X$ satisfying
\[H^q(X,A^m\otimes\mathcal{G}\otimes Q^k\otimes\mathcal{I}_k)=0 \text{ for any positive integers } q,\,m,\,k.\]
Then for any $q\geq0$, we have
\[\dim H^q(X,\mathcal{G}\otimes Q^k\otimes\mathcal{I}_k)=O(k^{n-q})\quad\text{as}\quad k\rightarrow+\infty.\]
\end{lem}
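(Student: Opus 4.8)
The plan is to prove the statement by induction on $n=\dim X$, the inductive device being intersection with a general member of the very ample linear system $|A|$; write $\mathcal F_k:=\mathcal G\otimes Q^k\otimes\mathcal I_k$ throughout. For $q>n$ there is nothing to prove, since $H^q(X,\mathcal F_k)=0$. For $q=0$ the assertion is the standard polynomial growth $\dim H^0(Y,\mathcal H\otimes Q^k\otimes\mathcal I_k)=O(k^{\dim Y})$ for a projective manifold $Y$, a \emph{fixed} coherent sheaf $\mathcal H$, a line bundle $Q$ and an \emph{arbitrary} family of ideal sheaves $\mathcal I_k\subset\mathcal O_Y$; this holds uniformly in the family because $\mathcal I_k\subset\mathcal O_Y$ (one reduces by d\'evissage of $\mathcal H$ to torsion-free sheaves on integral subvarieties, embeds these into sums of line bundles, and invokes $\dim H^0(W,\mathcal L^{\otimes k})=O(k^{\dim W})$ for a line bundle $\mathcal L$ on a projective variety $W$), and it does not use the curvature hypothesis. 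It therefore remains to treat $1\le q\le n$, where the induction reduces $H^q$ on $X$ to $H^{q-1}$ on a hyperplane section, trading one unit of dimension for one unit of cohomological degree, so that the exponent $n-q$ is preserved and the recursion terminates in the already-settled case $q=0$ on an $(n-q)$-dimensional manifold.

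Fix $1\le q\le n$ and choose a general divisor $D\in|A|$ with defining section $s_D\in H^0(X,A)$. Here ``general'' means: $D$ is smooth and irreducible (Bertini), and $s_D$ is a non-zerodivisor on $\mathcal O_X/\mathcal I_k$ and on $\mathcal G\otimes\mathcal I_k$ for every $k$ (equivalently, $D$ contains none of the finitely many associated subvarieties of $\mathcal O_X/\mathcal I_k$ and of $\mathcal G\otimes\mathcal I_k$, for any $k$). With such a $D$ one has the short exact sequence
\[0\longrightarrow\mathcal F_k\xrightarrow{\ \cdot\, s_D\ }\mathcal F_k\otimes A\longrightarrow(\mathcal F_k\otimes A)\big|_D\longrightarrow0,\]
and, since tensor product commutes with restriction up to the $\mathrm{Tor}$-term that now vanishes, $(\mathcal F_k\otimes A)|_D=\mathcal G_D\otimes(Q|_D)^{k}\otimes(\mathcal I_k)_D$, where $\mathcal G_D:=(\mathcal G\otimes A)|_D$ is a \emph{fixed} coherent sheaf on $D$ and $(\mathcal I_k)_D$ is the ideal sheaf cut out on $D$ by $\mathcal I_k$. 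By hypothesis $H^q(X,\mathcal F_k\otimes A)=H^q\big(X,A^{1}\otimes\mathcal G\otimes Q^k\otimes\mathcal I_k\big)=0$, so the long exact cohomology sequence exhibits $H^q(X,\mathcal F_k)$ as a quotient of $H^{q-1}\big(D,\mathcal G_D\otimes(Q|_D)^k\otimes(\mathcal I_k)_D\big)$; in particular
\[\dim H^q(X,\mathcal F_k)\ \le\ \dim H^{q-1}\!\big(D,\mathcal G_D\otimes(Q|_D)^k\otimes(\mathcal I_k)_D\big).\]
Moreover the hypotheses of the lemma descend to $D$: for any $q',m\ge1$, the analogous exact sequence $0\to A^{m}\otimes\mathcal F_k\to A^{m+1}\otimes\mathcal F_k\to(A^{m+1}\otimes\mathcal F_k)|_D\to0$, combined with the vanishing of $H^{q'}(X,A^{m+1}\otimes\mathcal F_k)$ and of $H^{q'+1}(X,A^{m}\otimes\mathcal F_k)$, forces $H^{q'}\big(D,(A|_D)^{m}\otimes\mathcal G_D\otimes(Q|_D)^k\otimes(\mathcal I_k)_D\big)=0$. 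Since $A|_D$ is very ample on the $(n-1)$-dimensional projective manifold $D$, the inductive hypothesis applies in cohomological degree $q-1$ and gives $\dim H^{q-1}(D,\cdots)=O\big(k^{(n-1)-(q-1)}\big)=O(k^{n-q})$, which is the required estimate. (The case $n=0$, or more simply the case $q=0$ already disposed of in all dimensions, furnishes the base of the induction.)

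The main obstacle is the genericity bookkeeping. We need a \emph{single} $D\in|A|$ that simultaneously works for all $k$: smooth irreducible, and avoiding the finitely many associated subvarieties of $\mathcal O_X/\mathcal I_k$ and of $\mathcal G\otimes\mathcal I_k$ attached to each $k$ — so that the displayed sequences are genuinely exact and $(\mathcal F_k\otimes A)|_D$ really has the clean form $\mathcal G_D\otimes(Q|_D)^k\otimes(\mathcal I_k)_D$, with no leftover $\mathrm{Tor}$ terms and with $\mathcal G_D$ independent of $k$. Each of these requirements is a Zariski-open dense condition on the projective space $|A|$, and there are only countably many of them in all; because the ground field $\mathbb C$ is uncountable, a very general $D$ meets every one of them. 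Once $D$ is fixed the rest is routine: the curvature/vanishing hypothesis enters exactly twice in each inductive step — to kill $H^q(X,\mathcal F_k\otimes A)$ and to carry the hypothesis over to $D$ — and the polynomial-growth input for $q=0$ is classical. One should also record the easy point that the upper limit is independent of the choices made and that the bound obtained at each level is uniform in $k$, the implied constants depending only on the fixed data $X,A,Q,\mathcal G$ and on the combinatorics of the descent, never on the individual $\mathcal I_k$.
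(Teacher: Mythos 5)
The paper does not actually prove this lemma; it cites Lemma~4.3 of \cite{Matsumura2014} and uses it as a black box, so there is no internal proof to compare against. Your hyperplane-restriction induction is the natural and, as far as I can tell, the intended route. The inductive step is set up correctly: restricting to a very general $D\in|A|$ via $0\to\mathcal F_k\to\mathcal F_k\otimes A\to(\mathcal F_k\otimes A)|_D\to 0$, using $H^q(X,\mathcal F_k\otimes A)=0$ to exhibit $H^q(X,\mathcal F_k)$ as a quotient of $H^{q-1}(D,\mathcal G_D\otimes(Q|_D)^k\otimes(\mathcal I_k)_D)$, and verifying that the vanishing hypothesis descends to $D$. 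The ``very general $D$'' argument (countably many Zariski-closed conditions in $|A|$, uncountable base field) is a legitimate way to pick a single divisor that is simultaneously a non-zerodivisor on $\mathcal O_X/\mathcal I_k$ and on $\mathcal G\otimes\mathcal I_k$ for every $k$.

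The gap is in the base case $q=0$. You claim that $\dim H^0(Y,\mathcal H\otimes Q^k\otimes\mathcal I_k)=O(k^{\dim Y})$ is ``standard,'' does not use the vanishing hypothesis, and follows by d\'evissage of $\mathcal H$. That is unjustified when $\mathcal H$ is not locally free. Here $\mathcal H\otimes\mathcal I_k$ is the \emph{tensor} product, not the ideal product $\mathcal I_k\mathcal H\subset\mathcal H$: the natural map $\mathcal H\otimes\mathcal I_k\to\mathcal H$ has kernel $\mathrm{Tor}_1^{\mathcal O_Y}(\mathcal H,\mathcal O_Y/\mathcal I_k)$, which varies with $k$ and is generically nonzero for non-locally-free $\mathcal H$, so $\mathcal H\otimes\mathcal I_k$ does not embed in $\mathcal H$ and the estimate does not reduce to $\dim H^0(\mathcal H\otimes Q^k)=O(k^{\dim Y})$. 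The proposed d\'evissage breaks for the same reason: given $0\to\mathcal H'\to\mathcal H\to\mathcal H''\to 0$, applying $-\otimes\mathcal I_k\otimes Q^k$ yields only a right-exact sequence, and the kernel of $\mathcal H\otimes\cdots\to\mathcal H''\otimes\cdots$ is a \emph{quotient} of $\mathcal H'\otimes\mathcal I_k\otimes Q^k$, whose $H^0$ is not bounded by $\dim H^0(\mathcal H'\otimes\mathcal I_k\otimes Q^k)$. If $\mathcal G$ happens to be locally free the difficulty disappears ($\mathcal G_D$ stays locally free at every step of your induction and $\mathcal H\otimes\mathcal I_k\hookrightarrow\mathcal H$), but the lemma is stated for an arbitrary coherent $\mathcal G$, and the $\mathrm{Tor}_1$ contribution is then a $k$-dependent sheaf that is \emph{not} of the form (fixed coherent sheaf)$\,\otimes Q^k\otimes$(ideal), so neither your base-case bound nor the inductive hypothesis applies to it. You need either to prove the $q=0$ estimate \emph{using} the vanishing hypothesis you discarded (for instance by working with $\chi(\mathcal F_k\otimes A^m)$, a polynomial of degree $\le\dim Y$ in $m$ whose values at $m\ge1$ are $\dim H^0$, and relating its coefficients to $\mathcal F_k|_Z$ for $Z=D_1\cap\cdots\cap D_{\dim Y}$), or to reduce to the locally free case by a separate argument. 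As written, the base case, and therefore the lemma, is not established.
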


\begin{thm}[Theorem 6.1 in \cite{Demailly94}]\label{l:Demailly1994}
Let $X$ be a complex manifold equipped with a Hermitian
metric $\omega$, and $\Omega\subset\subset X$ be an open subset. Suppose that the Chern
curvature tensor of $T_X$ satisfies
\[\bigg(\frac{\sqrt{-1}}{2\pi}\Theta_{T_X}+\varpi\otimes\mathrm{Id}_{T_X}\bigg)
(\kappa_1\otimes\kappa_2,\kappa_1\otimes\kappa_2)
\geq0\quad(\forall\kappa_1,\kappa_2\in T_X\text{ with }\langle
\kappa_1,\kappa_2\rangle=0)\]
on a neighborhood of $\overline{\Omega}$, for some continuous nonnegative
$(1,1)$-form $\varpi$ on $X$. Assume that $\varphi$ is a quasi-psh function on $X$, and let $\gamma$ be a continuous real
$(1,1)$-form such that $\sqrt{-1}\partial\bar\partial\varphi\geq\gamma$ in the sense of currents. Then there is a family of functions $\varphi_{\varsigma,\rho}$ defined
on a neighborhood of $\overline{\Omega}$ ($\varsigma\in(0,+\infty)$ and $\rho\in(0,\rho_1)$ for some
positive number $\rho_1$) independent of $\gamma$, such that
\begin{enumerate}
\item[$(i)$]
$\varphi_{\varsigma,\rho}$ is quasi-psh on a neighborhood of $\overline{\Omega}$,
smooth on $\overline{\Omega}\setminus E_\varsigma(\varphi)$, increasing with respect to
$\varsigma$ and $\rho$ on $\overline{\Omega}$, and converges to $\varphi$ on
$\overline{\Omega}$ as $\rho\rightarrow0$,
\item[$(ii)$]
$\sqrt{-1}\partial\bar\partial\varphi_{\varsigma,\rho}\geq\gamma-\pi\varsigma \varpi-\delta_\rho\omega$ on $\Omega$,
\end{enumerate}
where $E_\varsigma(\varphi):=\{x\in X:\,\nu(\varphi,x)\geq \varsigma\}$
$(\varsigma>0)$ is the $\varsigma$-upperlevel set of Lelong numbers,
and $\{\delta_\rho\}$ is an increasing family of positive numbers
such that $\lim\limits_{\rho\rightarrow 0}\delta_\rho=0$.
\end{thm}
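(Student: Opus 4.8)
The plan is to follow Demailly's analytic approximation scheme via the Bergman kernel / heat-kernel-type convolution on the manifold, but keeping careful track of the curvature loss so that it is expressed in terms of the intrinsic quantities $\varpi$ and $\omega$ on $\overline{\Omega}$, independently of the particular lower bound $\gamma$. First I would fix a Stein (or relatively compact coordinate-ball) covering of a neighborhood of $\overline{\Omega}$, and on each chart regularize $\varphi$ by the standard mollification $\varphi \ast \chi_\rho$; the obstruction is of course that these local mollifications do not patch. To overcome this, I would pass to the global construction: choose an embedding / local holomorphic frames and define $\varphi_{\varsigma,\rho}$ through a weighted $L^2$ extremal function, namely $\varphi_{\varsigma,\rho}(z) = \frac{1}{2\varsigma}\log \sum_k |\sigma_k(z)|^2$ where $\{\sigma_k\}$ is an orthonormal basis of the Hilbert space of holomorphic functions (with respect to a local frame) that are $L^2$ against $e^{-2\varsigma\varphi}$ on balls of radius $\sim\rho$. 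This is exactly the mechanism by which the sublevel sets of $\nu(\varphi,\cdot)$ enter: by the Ohsawa--Takegoshi-type lower bound for the Bergman kernel together with Skoda's estimate on integrability exponents, $\varphi_{\varsigma,\rho}$ is smooth precisely away from $E_\varsigma(\varphi)$, and the Hörmander $L^2$ estimate (Lemma \ref{l:Hormander-dbar}) applied in the construction shows $\varphi_{\varsigma,\rho} \geq \varphi - C_\rho$ with $C_\rho \to 0$, hence (after a routine monotone-regularization-in-$\rho$ trick using the subadditivity of $\log$ of sums of squares) one can arrange monotonicity in $\rho$ and $\varsigma$ and convergence $\varphi_{\varsigma,\rho}\to\varphi$ as $\rho\to 0$. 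This gives item $(i)$.

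For item $(ii)$, the key computation is the curvature of $\varphi_{\varsigma,\rho}$: differentiating $\log\sum_k|\sigma_k|^2$ twice gives a current that is bounded below by the curvature contribution coming from the reproducing property, which in turn is controlled by how much the local holomorphic frame twists — this is where the Chern curvature hypothesis on $T_X$, in the symmetric form $\bigl(\tfrac{\sqrt{-1}}{2\pi}\Theta_{T_X} + \varpi\otimes\mathrm{Id}\bigr)(\kappa_1\otimes\kappa_2,\kappa_1\otimes\kappa_2)\geq 0$ for orthogonal $\kappa_1,\kappa_2$, is used: it controls the negativity of the second fundamental form of the fibers of the jet bundle / the twisting of the trivialization, yielding a loss of the form $-\pi\varsigma\varpi$. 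The extra term $-\delta_\rho\omega$ with $\delta_\rho\to 0$ absorbs the error from the finite radius $\rho$ of the balls (the difference between the pointwise curvature lower bound $\gamma$ of $\sqrt{-1}\partial\bar\partial\varphi$ and its "averaged" version over a $\rho$-ball, which is continuous and tends to $0$ uniformly on $\overline\Omega$ by continuity of $\gamma$ and compactness). Crucially, since $\sqrt{-1}\partial\bar\partial\varphi\geq\gamma$ is used only through the monotonicity/positivity propagated by the extremal construction and never through $\gamma$ itself in defining the $\sigma_k$'s, the family $\varphi_{\varsigma,\rho}$ depends only on $\varphi$ (and the fixed data $\omega,\varpi$), not on $\gamma$ — this is the "independent of $\gamma$" clause.

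I expect the main obstacle to be making the gluing of the local extremal functions into a genuinely global quasi-psh function while simultaneously preserving all four properties in $(i)$ (quasi-psh, smooth off $E_\varsigma$, monotone, convergent) together with the precise curvature bound in $(ii)$; in Demailly's original argument this is handled by a partition-of-unity patching of the local weights $\log\sum|\sigma_k^{(j)}|^2$ combined with a max-type regularization, and the bookkeeping of the cross terms produced by the cutoff functions (which a priori destroy plurisubharmonicity) is the delicate point — it is controlled exactly by the curvature of $T_X$, which is why the hypothesis on $\Theta_{T_X}$ appears. Since the statement is quoted verbatim as Theorem 6.1 of \cite{Demailly94}, I would in practice cite that reference for the detailed patching and only reproduce the curvature estimate in the form needed here; but the above is the route one would take to prove it from scratch.
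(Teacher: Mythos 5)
Your sketch proposes the Bergman--kernel (weighted $L^2$ extremal functions) route, i.e.\ essentially Demailly's 1992 analytic approximation of psh functions: $\varphi_{\varsigma,\rho}\sim\frac{1}{2\varsigma}\log\sum_k|\sigma_k|^2$. That is a genuinely different argument from the one the paper invokes. The cited reference \cite{Demailly94} (``Regularization of closed positive currents of type $(1,1)$ by the flow of a Chern connection'') constructs $\varphi_{\varsigma,\rho}$ in two steps that do not involve the Bergman kernel at all: first a global mollification along a modified exponential map,
\[
\Phi(x,w)=\int_{\{\zeta\in T_{X,x}:\,|\zeta|<1\}}\varphi\big(\mathrm{exph}_x(w\zeta)\big)\theta(|\zeta|^2)\,d\lambda(\zeta),
\]
which yields a one-parameter family $\varphi_\rho=\Phi(x,\rho)$ with $\Phi(x,\rho)+K\rho^2$ convex and increasing in $\log\rho$; and then a Kiselman Legendre transform in the mollification parameter,
\[
\varphi_{\varsigma,\rho}(x)=\inf_{0<|w|<1}\Big(\Phi(x,\rho|w|)+\rho|w|+\tfrac{\rho}{1-|w|^2}-\varsigma\log|w|\Big).
\]
This is precisely what the paper reviews immediately after the statement of the theorem, and --- crucially --- what the authors need: their Lemma~\ref{l:regularization-two-quasipsh} is built by running the same $\Phi_k$-and-Legendre-transform machinery simultaneously for two linear combinations of weights, something that has no analogue in the Bergman-kernel picture.

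Beyond being a different route, your sketch has two points that would not deliver the statement as written. First, the Bergman-kernel regularizations have analytic (log-pole) singularities along the common zero set of the extremal sections; they are smooth off an analytic set that is only comparable to $E_\varsigma(\varphi)$ via the two-sided Lelong-number inequalities, not smooth off exactly $E_\varsigma(\varphi)$ as item $(i)$ requires. In contrast, in the convolution-plus-Kiselman construction, smoothness outside $E_\varsigma(\varphi)$ comes from the fact that on that set the infimum in the Legendre transform is attained at $|w|$ bounded away from $0$, so implicit-function-type regularity applies. Second, the precise loss $-\pi\varsigma\varpi-\delta_\rho\omega$ in item $(ii)$, with $\varpi$ entering linearly through the Chern curvature of $T_X$ and $\delta_\rho\to 0$ controlling the finite mollification radius, drops out of explicit $\partial\bar\partial$ estimates for $\Phi(x,w)$ along the exponential flow; the Bergman-kernel method produces error terms of a different shape (roughly $-\frac{C}{m}\omega$), and your sketch asserts rather than derives the stated form. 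Since the statement is quoted as Theorem~6.1 of \cite{Demailly94}, and the rest of the paper depends on the internal structure of that proof, you should present the convolution/Legendre construction rather than the Bergman-kernel one.
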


\begin{remark}
Although Theorem \ref{l:Demailly1994} is stated in \cite{Demailly94} in the case $X$ is compact,
almost the same proof as in \cite{Demailly94} shows that Theorem \ref{l:Demailly1994} holds in the noncompact case while uniform
estimates are obtained only on the relatively compact subset
$\Omega$.\qed
\end{remark}

We need to use the proof of Theorem \ref{l:Demailly1994} to obtain an approximation lemma below. Let us first review the construction of $\varphi_{\varsigma,\rho}$ in \cite{Demailly94}.

Select a smooth cut-off function $\theta:\mathbb{R}\longrightarrow \mathbb{R}$ such that
\[\theta(t)>0\text{ for }t<1,\text{ }\theta(t)=0\text{ for }t\geq1,\text{ and }
\int_{v\in\mathbb{C}^n}\theta(|v|^2)d\lambda(v)=1.\]
We set
\[\varphi_\rho(x)=\frac{1}{\rho^{2n}}\int_{\{\zeta\in T_{X,x}:\,|\zeta|<\rho\}}\varphi\big(\mathrm{exph}_x(\zeta)\big)\theta\bigg(\frac{|\zeta|^2}{\rho^2}\bigg)d\lambda(\zeta),\quad \rho>0,\]
where $\mathrm{exph}_x(\zeta)$ is a smooth map modified from the exponential map, and $d\lambda(\zeta)$ denotes the Lebesgue measure on the Hermitian space $\big(T_{X,x},\omega(x)\big)$.

Let $\rho$ be a small positive number. For $w\in \mathbb{C}$ with $|w|=\rho$, we have
\[\varphi_\rho(x)=\Phi(x,w)=\Phi(x,\rho)\]
with
\begin{equation}\label{e:def-Phi}
\Phi(x,w):=\int_{\{\zeta\in T_{X,x}:\,|\zeta|<1\}}\varphi\big(\mathrm{exph}_x(w\zeta)\big)\theta(|\zeta|^2)d\lambda(\zeta).
\end{equation}

In \cite{Demailly94}, Demailly proved that there is a positive number $K$ and a positive number $\rho_0$ such that
\begin{enumerate}
\item[$(a)$]
$\Phi(x,w)$ is smooth over $\overline{\Omega}\times\{0<|w|<\rho_0\}$ and $\lim\limits_{\rho\rightarrow 0}\Phi(x,\rho)=\varphi(x)$ for any $x\in\overline{\Omega}$,
\item[$(b)$]
$\Phi(x,\rho)+K\rho^2$ is convex and increasing in $\log\rho$ when $\rho\in(0,\rho_0)$ and $x\in\overline{\Omega}$,
\item[$(c)$]
$\Phi(x,\rho)$ is quasi-psh on a neighborhood of $\overline{\Omega}$ for any fixed $\rho\in(0,\rho_0)$.
\end{enumerate}

By following an idea of Kiselman \cite{Kiselman1979}, $\varphi_{\varsigma,\rho}$ was defined in \cite{Demailly94} to be the Legendre transform
\[\varphi_{\varsigma,\rho}(x)=\inf\limits_{0<|w|<1}\bigg(\Phi(x,\rho|w|)+\rho|w|+\frac{\rho}{1-|w|^2}-\varsigma\log|w|\bigg).\]
Then it is clear that $\varphi_{\varsigma,\rho}$ is increasing in $\varsigma$ and $\rho$, and that
\[\lim\limits_{\rho\rightarrow 0}\varphi_{\varsigma,\rho}(x)=\lim\limits_{|w|\rightarrow 0}\Phi(x,w)=\varphi(x).\]

By a lot of computations, Demailly obtained estimates for $\sqrt{-1}\partial\bar\partial\Phi$ and proved that $\varphi_{\varsigma,\rho}$ satisfies the conclusion of Theorem \ref{l:Demailly1994}.

Now, by using the construction of $\varphi_{\varsigma,\rho}$, we prove the following approximation lemma, which plays an important role in the proofs of Theorem \ref{t:Zhou-Zhu-nonreduced-quasipsh} and Theorem \ref{t:Zhou-Zhu-optimal-jet-extension}.

\begin{lem}\label{l:regularization-two-quasipsh}
Let $X$ be a complex manifold equipped with a Hermitian
metric $\omega$, and $\Omega\subset\subset X$ be an open subset.
Suppose that the Chern
curvature tensor of $T_X$ satisfies
\[\bigg(\frac{\sqrt{-1}}{2\pi}\Theta_{T_X}+\varpi\otimes\mathrm{Id}_{T_X}\bigg)
(\kappa_1\otimes\kappa_2,\kappa_1\otimes\kappa_2)
\geq0\quad(\forall\kappa_1,\kappa_2\in T_X\text{ with }\langle
\kappa_1,\kappa_2\rangle=0)\]
on a neighborhood of $\overline{\Omega}$, for some continuous nonnegative
$(1,1)$-form $\varpi$ on $X$.
Let $\varphi_1$ be a quasi-psh function on $X$, and $\varphi_2$ be an $L^1_{\mathrm{loc}}$ function on $X$ which is bounded above. Assume that $\varphi_1+\varphi_2$ and $\varphi_1+(1+\alpha)\varphi_2$ are quasi-psh on $X$ such that the following two inequalities hold on $X$ in the sense of currents:
\[\sqrt{-1}\partial\bar\partial\varphi_1+\sqrt{-1}\partial\bar\partial\varphi_2\geq\gamma_1,\]
\[\sqrt{-1}\partial\bar\partial\varphi_1+(1+\alpha)\sqrt{-1}\partial\bar\partial\varphi_2\geq\gamma_2,\]
where $\gamma_1$ and $\gamma_2$ are continuous real $(1,1)$-forms on $X$, and $\alpha$ is a positive number.
Let
\[\Sigma:=\{\varphi_1+\varphi_2=-\infty\}\cup\{\varphi_1+(1+\alpha)\varphi_2=-\infty\}\]
and
\[\Sigma_\varsigma:=E_\varsigma\big(\varphi_1+\varphi_2\big)\cup E_\varsigma\big(\varphi_1+(1+\alpha)\varphi_2\big),\]
where $E_\varsigma(\varphi):=\{x\in X:\,\nu(\varphi,x)\geq \varsigma\}$
$(\varsigma>0)$ is the $\varsigma$-upperlevel set of Lelong numbers for a quasi-psh function $\varphi$ on $X$.
Then there are two family of upper semicontinuous functions $\{\varphi_{1,\varsigma,\rho}\}$ and $\{\varphi_{2,\varsigma,\rho}\}$ defined on a neighborhood of $\overline{\Omega}$ with values in $[-\infty,+\infty]$ ($\varsigma\in(0,+\infty)$ and $\rho\in(0,\rho_1)$ for some positive number $\rho_1$) independent of $\gamma_1$ and $\gamma_2$, such that
\begin{enumerate}
\item[$(i)$]
$\varphi_{1,\varsigma,\rho}+\varphi_{2,\varsigma,\rho}$ and $\varphi_{1,\varsigma,\rho}+(1+\alpha)\varphi_{2,\varsigma,\rho}$ are quasi-psh on a neighborhood of $\overline{\Omega}$ up to their values on $\Sigma_\varsigma$,
smooth on $\overline{\Omega}\setminus \Sigma_\varsigma$, increasing with respect to
$\varsigma$ on $\overline{\Omega}\setminus \Sigma$, increasing with respect to $\rho$ on $\overline{\Omega}\setminus \Sigma_\varsigma$, and converge to $\varphi_1+\varphi_2$ and $\varphi_1+(1+\alpha)\varphi_2$ on
$\overline{\Omega}\setminus \Sigma_\varsigma$ respectively as $\rho\rightarrow0$ (we say that a function $f$ is quasi-psh on an open set $\Omega_1$ up to its values on a set $\Sigma_1$ if $f=g$ on $\Omega_1\setminus\Sigma_1$ for some quasi-psh function $g$ on $\Omega_1$),
\item[$(ii)$]
$\varphi_{1,\varsigma,\rho}\geq\varphi_1$ on $\Omega$, $\varphi_{2,\varsigma,\rho}\leq\sup\limits_{X}\varphi_2$ on $\Omega$, and $\bar\partial\varphi_{k,\varsigma,\rho}\in L^1$ on $\Omega$ for $k=1,2$,
\item[$(iii)$]
$\sqrt{-1}\partial\bar\partial\varphi_{1,\varsigma,\rho}+\sqrt{-1}\partial\bar\partial\varphi_{2,\varsigma,\rho}
\geq\gamma_1-\pi\varsigma\varpi-\delta_\rho\omega$ on $\Omega$,
\item[$(iv)$]
$\sqrt{-1}\partial\bar\partial\varphi_{1,\varsigma,\rho}+(1+\alpha)\sqrt{-1}\partial\bar\partial\varphi_{2,\varsigma,\rho}
\geq\gamma_2-\pi\varsigma\varpi-\delta_\rho\omega$ on $\Omega$,
\end{enumerate}
where $\{\delta_\rho\}$ is an increasing family of positive numbers
such that $\lim\limits_{\rho\rightarrow 0}\delta_\rho=0$.
\end{lem}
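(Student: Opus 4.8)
The plan is to mimic Demailly's construction from Theorem \ref{l:Demailly1994}, but applied \emph{simultaneously} to the two quasi-psh functions $u_1:=\varphi_1+\varphi_2$ and $u_2:=\varphi_1+(1+\alpha)\varphi_2$, and then to recover $\varphi_{1,\varsigma,\rho}$ and $\varphi_{2,\varsigma,\rho}$ by solving the obvious linear system. Concretely, write $\varphi_1=\frac{(1+\alpha)u_1-u_2}{\alpha}$ and $\varphi_2=\frac{u_2-u_1}{\alpha}$; the point is that $\Phi(x,w)$ in \eqref{e:def-Phi} is linear in $\varphi$, so if $\Phi_k(x,w)$ denotes the regularizing integral applied to $u_k$, then $\Phi_1,\Phi_2$ are built from the \emph{same} maps $\mathrm{exph}_x$ and cut-off $\theta$, hence share the same convexity radius $\rho_0$ and the same constant $K$ (take $K$ to work for both). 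First I would run Demailly's argument on each $u_k$ to get properties $(a),(b),(c)$ for $\Phi_k$, and then apply the Kiselman–Legendre transform to \emph{both} of them \emph{with the same parameters}:
\[u_{k,\varsigma,\rho}(x):=\inf_{0<|w|<1}\Bigl(\Phi_k(x,\rho|w|)+\rho|w|+\frac{\rho}{1-|w|^2}-\varsigma\log|w|\Bigr),\quad k=1,2.\]
Then set $\varphi_{1,\varsigma,\rho}:=\frac{(1+\alpha)u_{1,\varsigma,\rho}-u_{2,\varsigma,\rho}}{\alpha}$ and $\varphi_{2,\varsigma,\rho}:=\frac{u_{2,\varsigma,\rho}-u_{1,\varsigma,\rho}}{\alpha}$, so that by construction $\varphi_{1,\varsigma,\rho}+\varphi_{2,\varsigma,\rho}=u_{1,\varsigma,\rho}$ and $\varphi_{1,\varsigma,\rho}+(1+\alpha)\varphi_{2,\varsigma,\rho}=u_{2,\varsigma,\rho}$ identically, which will make $(iii)$ and $(iv)$ immediate consequences of Demailly's curvature estimate $(ii)$ in Theorem \ref{l:Demailly1994} applied to $u_1$ (with $\gamma=\gamma_1$) and $u_2$ (with $\gamma=\gamma_2$). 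Property $(i)$ then also follows: each $u_{k,\varsigma,\rho}$ is quasi-psh up to its values on $E_\varsigma(u_k)$, smooth off $E_\varsigma(u_k)$, monotone in $\varsigma$ and $\rho$ with the stated limit, and $\Sigma_\varsigma=E_\varsigma(u_1)\cup E_\varsigma(u_2)$, $\Sigma=\{u_1=-\infty\}\cup\{u_2=-\infty\}$ by definition.

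The remaining issue is the upper bound $\varphi_{1,\varsigma,\rho}\geq\varphi_1$, $\varphi_{2,\varsigma,\rho}\leq\sup_X\varphi_2$ in $(ii)$, and the integrability $\bar\partial\varphi_{k,\varsigma,\rho}\in L^1$. For the lower bound on $\varphi_{1,\varsigma,\rho}$: since $\varphi_1$ is itself quasi-psh, the mollification $\Phi$ applied to $\varphi_1$ yields (by the sub-mean-value inequality along the map $\mathrm{exph}$, after absorbing the smooth part, exactly as in Demailly's proof of monotonicity) $\Phi_{\varphi_1}(x,\rho)+K\rho^2\geq\varphi_1(x)$, and the extra positive terms $\rho|w|+\frac{\rho}{1-|w|^2}-\varsigma\log|w|$ in the Legendre transform only help once $\rho$ is small; here I would use linearity again, writing $\varphi_{1,\varsigma,\rho}$ in terms of $u_{1,\varsigma,\rho},u_{2,\varsigma,\rho}$ and comparing with the regularization of $\varphi_1$ directly — the cleanest route is actually to \emph{also} regularize $\varphi_1$ by the same procedure, obtaining $\varphi_{1,\varsigma,\rho}^{\mathrm{aux}}\geq\varphi_1$, and to check that the linear combination of the $u_{k,\varsigma,\rho}$ agrees with it up to a controlled additive term that can be absorbed by shrinking $\rho_1$ and adjusting the definition by a harmless constant. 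For $\varphi_{2,\varsigma,\rho}\leq\sup_X\varphi_2$: since $\varphi_2$ is bounded above, $u_2-(1+\alpha)$ times the constant $\sup\varphi_2$ and $u_1-\sup\varphi_2$ control the relevant mollifications, and the infimum in the Legendre transform can only decrease the mollified value, giving the bound after again passing through the linear relation $\alpha\varphi_{2,\varsigma,\rho}=u_{2,\varsigma,\rho}-u_{1,\varsigma,\rho}$. The $L^1$ property of $\bar\partial\varphi_{k,\varsigma,\rho}$ on $\Omega$ follows because $u_{k,\varsigma,\rho}$ is quasi-psh on a neighborhood of $\overline\Omega$ up to its values on the negligible set $\Sigma_\varsigma$ (an analytic set, hence of measure zero and removable for $\bar\partial$ of $L^1_{\mathrm{loc}}$ functions), so its $\bar\partial$ is representable by an $L^1_{\mathrm{loc}}$ form on $\Omega$, and a finite linear combination of such is again $L^1_{\mathrm{loc}}$, hence $L^1$ on the relatively compact $\Omega$.

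The main obstacle I anticipate is precisely the bookkeeping in $(ii)$: Demailly's construction guarantees monotonicity and the correct limit, but the sharp one-sided bounds $\varphi_{1,\varsigma,\rho}\geq\varphi_1$ and $\varphi_{2,\varsigma,\rho}\leq\sup_X\varphi_2$ are \emph{not} automatic from writing $\varphi_k$ as a signed linear combination of $u_1,u_2$ — a signed combination of two functions each $\geq$ (resp.\ $\leq$) its mollification need not retain a one-sided bound. So the real content is to show that the specific combinations $\frac{(1+\alpha)u_{1,\varsigma,\rho}-u_{2,\varsigma,\rho}}{\alpha}$ and $\frac{u_{2,\varsigma,\rho}-u_{1,\varsigma,\rho}}{\alpha}$ inherit the bounds, which I expect to do by exploiting that $\Phi_1-\Phi_2$ regularizes exactly $\varphi_2-(1+\alpha)\varphi_2=-\alpha\varphi_2$ (so it is the mollification of a single bounded-above function up to a smooth error), and symmetrically that $(1+\alpha)\Phi_1-\Phi_2$ regularizes $\alpha\varphi_1$; thus each of the two target functions, before the Legendre step, is literally $K'\rho^2$ away from the honest mollification of $\varphi_1$ (resp.\ a shift of $-\varphi_2$), and then the Legendre infimum plus the barrier terms reproduce Demailly's comparison with the original function. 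Once this identification is in place, everything else is a transcription of the cited proof.
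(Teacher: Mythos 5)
Your setup---regularize $u_1:=\varphi_1+\varphi_2$ and $u_2:=\varphi_1+(1+\alpha)\varphi_2$ by the Kiselman--Legendre procedure with the \emph{same} $(\varsigma,\rho,w)$-parameters, then recover $\varphi_{1,\varsigma,\rho}$ and $\varphi_{2,\varsigma,\rho}$ by linear inversion---is exactly what the paper does (its $\Upsilon_{k,\varsigma,\rho}$ equal your $u_{k,\varsigma,\rho}$ by linearity of the mollification step), and your derivation of $(i)$, $(iii)$, $(iv)$ and the $L^1$ part of $(ii)$ from Theorem~\ref{l:Demailly1994} is the intended one. You also correctly isolate the real content: the one-sided bounds $\varphi_{1,\varsigma,\rho}\geq\varphi_1$ and $\varphi_{2,\varsigma,\rho}\leq\sup_X\varphi_2$ do not come for free from a signed combination.

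The resolution you sketch for those bounds, however, has a genuine gap. Linearity at the mollification level ($\tfrac{(1+\alpha)\Phi_{u_1}-\Phi_{u_2}}{\alpha}=\Phi_{\varphi_1}$, etc.) does not pass through the Legendre step: $\inf$ does not commute with signed linear combinations, and the discrepancy between $\tfrac{(1+\alpha)u_{1,\varsigma,\rho}-u_{2,\varsigma,\rho}}{\alpha}$ and a direct regularization $\varphi_{1,\varsigma,\rho}^{\mathrm{aux}}$ of $\varphi_1$ is governed by how far apart the minimizing radii $|w|$ are for the different integrands, which is not ``a controlled additive term'' (it can blow up near poles of $\varphi_2$). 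Shifting by a constant is also not available, because it would destroy the exact relations $\varphi_{1,\varsigma,\rho}+\varphi_{2,\varsigma,\rho}=\Upsilon_{1,\varsigma,\rho}$ and $\varphi_{1,\varsigma,\rho}+(1+\alpha)\varphi_{2,\varsigma,\rho}=\Upsilon_{2,\varsigma,\rho}$ that make $(iii)$,$(iv)$ an immediate transcription of Demailly's estimate. What the paper uses instead is the elementary pair $\inf(A+B)-\inf A\geq\inf B$ and $\inf(A+B)-\inf A\leq\sup B$, applied to suitable $A,B$. Writing $T(w):=\rho|w|+\tfrac{\rho}{1-|w|^2}-\varsigma\log|w|$ and $\Phi_k$ for the mollification of $\varphi_k$: with $A=\tfrac{1}{1+\alpha}\bigl(\Phi_1+(1+\alpha)\Phi_2+T\bigr)$ and $B=\tfrac{\alpha}{1+\alpha}(\Phi_1+T)$ one has $A+B=\Phi_1+\Phi_2+T$, so $\inf(A+B)-\inf A\geq\inf B$ gives $(1+\alpha)\Upsilon_1-\Upsilon_2\geq\alpha\inf(\Phi_1+T)$, hence $\varphi_{1,\varsigma,\rho}\geq\inf(\Phi_1+T)\geq\varphi_1$ by the conclusion of Theorem~\ref{l:Demailly1994} applied to $\varphi_1$ alone; and with $A=\Phi_1+\Phi_2+T$, $B=\alpha\Phi_2$, the second inequality gives $\Upsilon_2-\Upsilon_1\leq\alpha\sup\Phi_2\leq\alpha\sup_X\varphi_2$, hence $\varphi_{2,\varsigma,\rho}\leq\sup_X\varphi_2$. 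This is the single step missing from your sketch: a one-sided bound \emph{does} survive the signed combination, but only because of this infimum inequality, not because the regularizations ``agree up to a constant.''
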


\begin{proof}
As in \eqref{e:def-Phi}, we set
\[\Phi_k(x,w)=\int_{\{\zeta\in T_{X,x}:\,|\zeta|<1\}}
\varphi_k\big(\mathrm{exph}_x(w\zeta)\big)\theta(|\zeta|^2)d\lambda(\zeta),\quad k=1,2.\]
Let
\[\Upsilon_{1,\varsigma,\rho}(x):=\inf\limits_{0<|w|<1}\bigg(\Phi_1(x,\rho|w|)
+\Phi_2(x,\rho|w|)+\rho|w|+\frac{\rho}{1-|w|^2}-\varsigma\log|w|\bigg)\]
and
\[\Upsilon_{2,\varsigma,\rho}(x):=\inf\limits_{0<|w|<1}\bigg(\Phi_1(x,\rho|w|)
+(1+\alpha)\Phi_2(x,\rho|w|)+\rho|w|+\frac{\rho}{1-|w|^2}-\varsigma\log|w|\bigg).\]

If $x\notin\Sigma_\varsigma$, define
\[\varphi_{1,\varsigma,\rho}(x)=\frac{1+\alpha}{\alpha}\Upsilon_{1,\varsigma,\rho}(x)-\frac{1}{\alpha}\Upsilon_{2,\varsigma,\rho}(x)\]
and
\[\varphi_{2,\varsigma,\rho}(x)=\frac{1}{\alpha}\Upsilon_{2,\varsigma,\rho}(x)-\frac{1}{\alpha}\Upsilon_{1,\varsigma,\rho}(x).\]

If $x\in\Sigma_\varsigma$, define
\[\varphi_{1,\varsigma,\rho}(x)=\varlimsup\limits_{\Sigma_\varsigma\not\ni y\rightarrow x}\varphi_{1,\varsigma,\rho}(y)\quad\text{and}\quad\varphi_{2,\varsigma,\rho}(x)=\varlimsup\limits_{\Sigma_\varsigma\not\ni y\rightarrow x}\varphi_{2,\varsigma,\rho}(y).\]

Hence we have
\[\varphi_{1,\varsigma,\rho}(x)+\varphi_{2,\varsigma,\rho}(x)=\Upsilon_{1,\varsigma,\rho}(x)\]
and
\[\varphi_{1,\varsigma,\rho}(x)+(1+\alpha)\varphi_{2,\varsigma,\rho}(x)=\Upsilon_{2,\varsigma,\rho}(x)\]
for $x\notin\Sigma_\varsigma$.

Therefore, $(i)$, $(iii)$ and $(iv)$ holds by Theorem \ref{l:Demailly1994}. It is also easy to see that $\varphi_{1,\varsigma,\rho}$ and $\varphi_{2,\varsigma,\rho}$ are upper semicontinuous.

Let $A:(0,1)\longrightarrow\mathbb{R}$ and $B:(0,1)\longrightarrow\mathbb{R}$ be two functions such that
\[\inf\limits_{0<t<1}A(t)>-\infty.\]

The simple property
\[\inf\limits_{0<t<1}\big(A(t)+B(t)\big)-\inf\limits_{0<t<1}A(t)\geq\inf\limits_{0<t<1}B(t)\]
implies that
\[\varphi_{1,\varsigma,\rho}(x)\geq\inf\limits_{0<|w|<1}\bigg(\Phi_1(x,\rho|w|)+\rho|w|+\frac{\rho}{1-|w|^2}-\varsigma\log|w|\bigg)\]
for any $x\in\overline{\Omega}\setminus\Sigma_\varsigma$.

Hence $\varphi_{1,\varsigma,\rho}\geq\varphi_1$ on $\Omega\setminus\Sigma_\varsigma$ when $\rho$ is small enough by the conclusion $(i)$ in Theorem \ref{l:Demailly1994}. Then $\varphi_{1,\varsigma,\rho}\geq\varphi_1$ on $\Omega$ by the quasi-plurisubharmonicity of $\varphi_1$ and the definition of $\varphi_{1,\varsigma,\rho}$ on $\Sigma_\varsigma$.

The simple property
\[\inf\limits_{0<t<1}\big(A(t)+B(t)\big)-\inf\limits_{0<t<1}A(t)\leq\sup\limits_{0<t<1}B(t)\]
implies that
\[\varphi_{2,\varsigma,\rho}(x)\leq\sup\limits_{0<|w|<1}\Phi_2(x,\rho|w|)\]
for any $x\in\overline{\Omega}\setminus\Sigma_\varsigma$.

Since it is easy to see that $\Phi_2(x,w)\leq\sup\limits_{X}\varphi_2$  for any $x\in\overline{\Omega}$ when $|w|$ is small enough, we get $\varphi_{2,\varsigma,\rho}(x)\leq\sup\limits_{X}\varphi_2$ for any $x\in\overline{\Omega}\setminus\Sigma_\varsigma$ when $\rho$ is small enough. Hence $\varphi_{2,\varsigma,\rho}\leq\sup\limits_{X}\varphi_2$ on $\overline{\Omega}$ when $\rho$ is small enough by the definition of $\varphi_{2,\varsigma,\rho}$ on $\Sigma_\varsigma$.

Since $\Upsilon_{1,\varsigma,\rho}$ and $\Upsilon_{2,\varsigma,\rho}$ are quasi-psh functions on a neighborhood of $\overline{\Omega}$ by Theorem \ref{l:Demailly1994}, and the first partial derivatives of any quasi-psh function are in $L^1_{\mathrm{loc}}$, we get that $\bar\partial\varphi_{k,\varsigma,\rho}\in L^1$ on $\Omega$ for $k=1,2$.

Therefore, we get $(ii)$.
\end{proof}

Let $X$ be an $n$-dimensional complex analytic space, and $\mathcal{F}$ be a coherent analytic sheaf over $X$. Then there is a natural topology on the cohomology groups $H^q(X,\mathcal{F})$ ($0\leq q\leq n$).

In fact, let $\mathcal{U}=\{U_i\}_{i\in I}$ be a Stein covering of $X$. Since $\mathcal{F}$ is coherent, $\mathcal{F}$ is locally isomorphic to a quotient sheaf of some direct sum $\mathcal{O}_X^{\oplus k}$. Since the space of sections $\Gamma(U,\mathcal{O}_X^{\oplus k})$ for any Stein open subset $U\subset X$ can be endowed with the topology of local uniform convergence of holomorphic sections, there is a natural quotient topology on $\Gamma(V,\mathcal{F})$ for any Stein open subset $V\subset X$. Then we consider the product topology on the spaces of \v{C}ech cochains
$C^q(\mathcal{U},\mathcal{F})=\prod\Gamma(U_{i_0i_1\cdots i_q},\mathcal{F})$ and the quotient topology on $\check{H}^q(\mathcal{U},\mathcal{F})$, where $U_{i_0i_1\cdots i_q}$ denotes $U_{i_0}\cap U_{i_1}\cap\cdots\cap U_{i_q}$. Since Leray's Theorem shows that the sheaf (or \v{C}ech) cohomology group $H^q(X,\mathcal{F})$ is isomorphic to $\check{H}^q(\mathcal{U},\mathcal{F})$, $H^q(X,\mathcal{F})$ can be endowed with the resulting topology, which is in fact independent of the choice of the Stein covering $\mathcal{U}$ (cf. Section 4.C of Chapter IX in \cite{Demailly-book}).

The following lemma is a topological result for spaces of sections of a coherent analytic sheaf.

\begin{lem}[Lemma 12 of Section A in Chapter VIII of \cite{Gunning-Rossi}]\label{l:close property of subsheaf}
Let $X$ be a complex analytic space, and $\mathcal{F}$ be a coherent analytic sheaf over $X$. Let $x\in X$, and $\mathcal{M}$ be a submodule of $\mathcal{F}_x$. Then for any open neighborhood $U$ of $x$,
\[M_U:=\{F\in H^0(U,\mathcal{F}):\,\text{the germ of }F\text{ at }x\text{ belongs to } \mathcal{M}\}\]
is a closed subset of $H^0(U,\mathcal{F})$.
\end{lem}

If $X$ is a holomorphically convex complex analytic space, there exists a topological isomorphism between cohomology groups as stated in the following lemma.

\begin{lem}[Lemma II.1 in \cite{Prill1971}]\label{l:Prill1971}
Let $X$ and $S$ be complex analytic spaces, and let $\pi: X\longrightarrow S$ be a proper
holomorphic surjection. Let $\mathcal{F}$ be a coherent analytic sheaf on $X$ and suppose $S$ is a
Stein space. Then the direct image map
\[H^q(X,\mathcal{F})\longrightarrow H^0(S,R^q\pi_*\mathcal{F})\]
is an isomorphism
of topological vector spaces for each $q\geq0$. In particular, each
$H^q(X,\mathcal{F})$ is Hausdorff.
\end{lem}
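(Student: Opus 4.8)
The plan is to split the statement into an algebraic part — the isomorphism of abstract vector spaces — and a topological part — that this isomorphism is moreover a homeomorphism, which in particular forces $H^q(X,\mathcal{F})$ to be Hausdorff (being homeomorphic to the manifestly Fréchet space $H^0(S,R^q\pi_*\mathcal{F})$).

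\emph{Algebraic part.} First I would invoke Grauert's direct image theorem: since $\pi$ is proper and $\mathcal{F}$ is coherent, each higher direct image $R^q\pi_*\mathcal{F}$ is a coherent analytic sheaf on $S$. Since $S$ is Stein, Cartan's Theorem B then gives $H^p(S,R^q\pi_*\mathcal{F})=0$ for all $p\geq 1$ and all $q\geq 0$. Feeding this into the Leray spectral sequence of $\pi$,
\[E_2^{p,q}=H^p\big(S,R^q\pi_*\mathcal{F}\big)\Longrightarrow H^{p+q}(X,\mathcal{F}),\]
all rows with $p\geq 1$ vanish, so the spectral sequence degenerates at $E_2$ and the edge homomorphism $H^q(X,\mathcal{F})\to E_2^{0,q}=H^0(S,R^q\pi_*\mathcal{F})$ — which is exactly the direct image map in the statement — is a vector-space isomorphism.

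\emph{Topological part.} The target is a Fréchet space: $R^q\pi_*\mathcal{F}$ is coherent, $S$ is Stein, and $H^0$ of a coherent sheaf on a Stein space carries its canonical Fréchet topology of local uniform convergence. To compare topologies I would realize the direct image map at the \v{C}ech level: pick a Stein covering $\mathcal{V}=\{V_j\}$ of $S$, pull it back and refine it to a Stein covering $\mathcal{U}$ of $X$ so that $\pi$ carries each member of $\mathcal{U}$ into some $V_j$; the induced map of \v{C}ech cochain complexes is then continuous for the product topologies, and it descends to a continuous linear map on cohomology. Since a continuous linear bijection between Fréchet spaces is a topological isomorphism by the open mapping theorem (Lemma \ref{l:open mapping theorem}), it only remains to see that the source $H^q(X,\mathcal{F})$ is itself Fréchet — equivalently, since the cochain spaces $C^p(\mathcal{U},\mathcal{F})$ are Fréchet with continuous differentials, that the coboundary space $B^q$ is closed in the cocycle space $Z^q$, i.e. that $\check{H}^q(\mathcal{U},\mathcal{F})$ is Hausdorff.

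\emph{Main obstacle.} Establishing this closedness is the technical core of the lemma and the point where properness of $\pi$ is indispensable. I would obtain it from the quantitative content of Grauert's coherence theorem: over a relatively compact Stein $V_j\subset\subset S$, properness lets one cover $\pi^{-1}(\overline{V_j})$ by finitely many privileged charts and reduce the coboundary/$\bar\partial$ estimates to a Banach-space situation, where L.\ Schwartz's theorem on perturbations of surjections between Fréchet spaces (or the finiteness estimates internal to Grauert's proof) yields that the relevant coboundary operators have closed range; patching these conclusions over a Stein exhaustion of $S$ shows that $B^q$ is closed in $Z^q$. I expect this last step — carried out carefully rather than by black-boxing Grauert and Schwartz — to be the only genuinely delicate part; the rest is formal.
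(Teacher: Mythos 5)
The paper itself does not prove this statement; it simply cites Prill's Lemma II.1, so the appropriate comparison is with the standard argument behind that citation. Your algebraic part (Grauert coherence, Cartan B, Leray degeneration at $E_2$) is exactly right, and your construction of a continuous direct image map at the \v{C}ech level is also correct, including the observation that the topology on $H^q(X,\mathcal{F})$ is covering-independent.

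However, your ``Main obstacle'' paragraph both misidentifies the difficulty and leaves the proof incomplete. Once you know that the direct image map $\lambda\colon H^q(X,\mathcal{F})\to H^0(S,R^q\pi_*\mathcal{F})$ is (a) continuous and (b) a linear bijection, and that the target is Fr\'{e}chet (hence Hausdorff), the Hausdorffness of the source is automatic: a continuous injection into a Hausdorff space always has Hausdorff domain. Since $H^q(X,\mathcal{F})$ is a quotient of the Fr\'{e}chet space $Z^q(\mathcal{U},\mathcal{F})$ by the subspace $B^q(\mathcal{U},\mathcal{F})$, Hausdorffness of the quotient forces $B^q$ to be closed, so the quotient is again Fr\'{e}chet; then the open mapping theorem (your Lemma \ref{l:open mapping theorem}) upgrades the continuous bijection $\lambda$ to a topological isomorphism. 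There is therefore no need to re-derive closed-range estimates from the internals of Grauert's theorem or to invoke Schwartz's perturbation theorem, and the ``patching over a Stein exhaustion'' step you sketch would in any case require a nontrivial Runge-type argument that you do not supply. As written, the proof is incomplete at that step; fortunately the step is superfluous, and replacing it with the two-line Hausdorff argument above finishes the proof.
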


Let $X$ be an $n$-dimensional complex manifold, $\psi$ be an $L^1_{\mathrm{loc}}$ function on $X$ which is locally bounded above, and $(L,h)$ be a holomorphic line bundle over $X$ equipped with a
singular Hermitian metric $h$. Assume that $\sqrt{-1}\Theta_{L,h}+\sqrt{-1}\partial\bar\partial\psi\geq\gamma$ on $X$ in the sense of currents for some continuous real $(1,1)$-form $\gamma$ on $X$.

Now we define topologies on the $L^2_{\mathrm{loc}}$ Dolbeault cohomology groups with respect to the singular metric $h$.

Let $\mathcal{L}^{n,q}_{(2),h}$ $(0\leq q \leq n)$ denote the sheaf over $X$ whose stalk over a point $x\in X$ consists of germs of Lebesgue measurable $L$-valued $(n,q)$-forms $u$ such that
\[\int_{U}|u|^2_hd\lambda+\int_{U}|\bar\partial u|^2_hd\lambda<+\infty\]
for some open coordinate neighborhood $U$ of $x$, where $d\lambda$ is the Lebesgue measure with respect to the coordinates on $U$. Then
\[0\rightarrow\mathcal{O}_X(K_X\otimes L)\otimes\mathcal{I}(h)\overset{\iota}\rightarrow\mathcal{L}^{n,0}_{(2),h}
\overset{\bar\partial}\rightarrow\mathcal{L}^{n,1}_{(2),h}\overset{\bar\partial}\rightarrow
\mathcal{L}^{n,2}_{(2),h}\overset{\bar\partial}\rightarrow\cdots\]
is a fine resolution of $\mathcal{O}_X(K_X\otimes L)\otimes\mathcal{I}(h)$ by Lemma \ref{l:Hormander-dbar}, where $\iota$ is the inclusion homomorphism.

Let $U\subset X$ be an arbitrary Stein open coordinate subset. We define semi-norms $\|\bullet\|_{h,K}$ on the space of sections $\Gamma(U,\mathcal{L}^{n,q}_{(2),h})$ by
\[\|u\|_{h,K}:=\bigg(\int_{K}|u|^2_hd\lambda+\int_{K}|\bar\partial u|^2_hd\lambda\bigg)^{\frac{1}{2}},\]
where $K$ is any compact subset of $U$. Then $\Gamma(U,\mathcal{L}^{n,q}_{(2),h})$ together with the family of semi-norms $\|\bullet\|_{h,K}$ becomes a Fr\'{e}chet space (the Fr\'{e}chet topology is independent of the choice of the coordinates on $U$). Then the following induced sequence
\begin{equation}\label{e:dbar-sequence of section space-h}
0\rightarrow\Gamma(U,\mathcal{O}_X(K_X\otimes L)\otimes\mathcal{I}(h))\overset{\tilde{\iota}}\rightarrow\Gamma(U,\mathcal{L}^{n,0}_{(2),h})
\overset{\bar\partial_0}\rightarrow\Gamma(U,\mathcal{L}^{n,1}_{(2),h})\overset{\bar\partial_1}\rightarrow\cdots
\end{equation}
is exact by Lemma \ref{l:Hormander-dbar}. The homomorphism $\bar\partial_q$ is continuous for each $q$ by the definitions of the semi-norms $\|\bullet\|_{h,K}$, and the homomorphism $\tilde{\iota}$ is continuous by Lemma \ref{l:local generators uniform estimate}, where $\Gamma(U,\mathcal{O}_X(K_X\otimes L)\otimes\mathcal{I}(h))$ is endowed with the Fr\'{e}chet topology of locally uniform convergence of holomorphic sections.

The corresponding Dolbeault complex is
\begin{equation*}
\Gamma(X,\mathcal{L}^{n,0}_{(2),h})
\overset{\bar\partial_0}\rightarrow\Gamma(X,\mathcal{L}^{n,1}_{(2),h})\overset{\bar\partial_1}\rightarrow
\Gamma(X,\mathcal{L}^{n,2}_{(2),h})\overset{\bar\partial_2}\rightarrow\cdots,
\end{equation*}
where we also endow $\Gamma(X,\mathcal{L}^{n,q}_{(2),h})$ with the Fr\'{e}chet topology defined similarly as the topology of $\Gamma(U,\mathcal{L}^{n,q}_{(2),h})$. Let
\[\bar\partial_{-1}:\,0\rightarrow\Gamma(X,\mathcal{L}^{n,0}_{(2),h})\]
be the zero map. Then the $L^2_{\mathrm{loc}}$ Dolbeault cohomology group with respect to the singular metric $h$ defined by
\[H^{n,q}_{(2)}(X,L,h)=\frac{\mathrm{Ker}\,\bar\partial_q}{\mathrm{Im}\,\bar\partial_{q-1}}\quad(0\leq q\leq n)\]
is endowed with the quotient topology.

For the sheaf (or \v{C}ech) cohomology group and the $L^2_{\mathrm{loc}}$ Dolbeault cohomology group, there is a topological isomorphism as stated in the following lemma.

\begin{lem}\label{l:topological Dolbeault isomorphism-h}
Under the topologies defined above, the group isomorphism
\[H^q(X,\mathcal{O}_X(K_X\otimes L)\otimes\mathcal{I}(h))\simeq H^{n,q}_{(2)}(X,L,h) \quad (0\leq q\leq n)\]
is a topological isomorphism.
\end{lem}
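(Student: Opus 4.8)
The plan is to exhibit the isomorphism as a composition of standard comparison maps, each of which is simultaneously a cohomology isomorphism \emph{and} a homeomorphism in the prescribed topologies. First I would fix a Stein covering $\mathcal{U}=\{U_i\}_{i\in I}$ of $X$. Since the complex of sheaves
\[0\rightarrow\mathcal{O}_X(K_X\otimes L)\otimes\mathcal{I}(h)\overset{\iota}\rightarrow\mathcal{L}^{n,0}_{(2),h}
\overset{\bar\partial}\rightarrow\mathcal{L}^{n,1}_{(2),h}\overset{\bar\partial}\rightarrow\cdots\]
is a fine (hence acyclic) resolution, the abstract de Rham--Weil isomorphism identifies $H^{n,q}_{(2)}(X,L,h)$ with $\check H^q(\mathcal{U},\mathcal{O}_X(K_X\otimes L)\otimes\mathcal{I}(h))$, which by Leray's theorem is $H^q(X,\mathcal{O}_X(K_X\otimes L)\otimes\mathcal{I}(h))$. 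So the content of the lemma is purely topological: one must check that the connecting maps realizing this composition are continuous with continuous inverses.

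The key technical device is a double complex / staircase argument combined with the open mapping theorem (Lemma \ref{l:open mapping theorem}). Concretely, I would form the \v{C}ech--Dolbeault double complex $C^p(\mathcal{U},\mathcal{L}^{n,q}_{(2),h})$, each term a product of Fr\'echet spaces $\Gamma(U_{i_0\cdots i_p},\mathcal{L}^{n,q}_{(2),h})$ and hence itself Fr\'echet, with \v{C}ech and $\bar\partial$ differentials that are manifestly continuous. On each Stein $U_{i_0\cdots i_p}$ the augmented column \eqref{e:dbar-sequence of section space-h} is an exact sequence of Fr\'echet spaces with continuous maps; by the open mapping theorem each $\bar\partial_q$ there is open onto its (closed) image, so the column is \emph{topologically} exact, i.e. the induced maps on kernels/cokernels are isomorphisms of Fr\'echet spaces. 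Similarly each row, being essentially the \v{C}ech complex of the fine sheaf $\mathcal{L}^{n,q}_{(2),h}$ with respect to a covering, is topologically exact in positive \v{C}ech degree. The standard zig-zag of the double complex then produces the comparison maps $H^{n,q}_{(2)}(X,L,h)\to\mathrm{(total\ cohomology)}\leftarrow\check H^q(\mathcal{U},\mathcal{O}_X(K_X\otimes L)\otimes\mathcal{I}(h))$, and tracking continuity through finitely many applications of openness shows each is a topological isomorphism. Finally, the identification of \v{C}ech cohomology of the covering with sheaf cohomology is a topological isomorphism by the very definition of the topology on $H^q(X,\mathcal F)$ recalled just before the lemma (independence of the covering is cited from Demailly's book).

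I expect the main obstacle to be the \emph{topological} exactness of the columns, i.e. showing that $\mathrm{Im}\,\bar\partial_{q-1}$ is closed in $\Gamma(U,\mathcal{L}^{n,q}_{(2),h})$ so that the quotient topology on the Dolbeault cohomology is Hausdorff and the open mapping theorem applies cleanly. The resolution is: since \eqref{e:dbar-sequence of section space-h} is exact, $\mathrm{Im}\,\bar\partial_{q-1}=\mathrm{Ker}\,\bar\partial_q$, which is closed because $\bar\partial_q$ is continuous; then $\bar\partial_{q-1}$ is a continuous surjection of Fr\'echet spaces onto this closed subspace, hence open by Lemma \ref{l:open mapping theorem}. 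One must also verify that $\tilde\iota$ is a topological embedding with closed image — continuity is cited via Lemma \ref{l:local generators uniform estimate}, the closedness of the image (germs lying in $\mathcal I(h)$, equivalently holomorphic square-integrable forms sitting inside the $L^2_{\mathrm{loc}}$ space) follows from Lemma \ref{l:close property of subsheaf} together with the fact that locally uniform limits of holomorphic forms are holomorphic, and openness onto the image again follows from the open mapping theorem once continuity and bijectivity onto a Fr\'echet subspace are in hand. Assembling these, all arrows in the de Rham--Weil/Leray chain are continuous bijections between Fr\'echet (or Hausdorff quotient) spaces, hence homeomorphisms, which is exactly the claim.
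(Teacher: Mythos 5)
Your proposal is correct and is in substance the same argument the paper relies on: the paper simply cites Proposition 12 in Andreotti--Kas (after noting continuity of the maps in \eqref{e:dbar-sequence of section space-h} and treating $q=0$ via Lemma \ref{l:local generators uniform estimate}), and what you have written out — the \v{C}ech--Dolbeault double complex of Fr\'echet spaces, topological exactness of the columns on Stein intersections via the open mapping theorem (using that $\mathrm{Im}\,\bar\partial_{q-1}=\mathrm{Ker}\,\bar\partial_q$ is closed), topological exactness of the rows via the continuous partition-of-unity homotopy for the fine sheaves $\mathcal{L}^{n,q}_{(2),h}$, and a continuity-tracking zig-zag — is precisely the content of that reference adapted to the $L^2_{\mathrm{loc}}$ setting. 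Your added care with the embedding $\tilde\iota$ (continuity from Lemma \ref{l:local generators uniform estimate}, closed image from Lemma \ref{l:close property of subsheaf}) matches the paper's separate treatment of $q=0$.
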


\begin{proof}
The topological isomorphism for $q=0$ follows easily from Lemma \ref{l:local generators uniform estimate}.

Since the homomorphisms in the exact sequence \eqref{e:dbar-sequence of section space-h} are continuous, The topological isomorphism for $q\geq1$ can be proved in the same way as in Proposition 12 in \cite{Andreotti-Kas1973}, where the topological isomorphism between the \v{C}ech cohomology group and the usual $C^\infty$ Dolbeault cohomology group was proved.

\end{proof}

Similarly, we can define topologies on the $L^2_{\mathrm{loc}}$ Dolbeault cohomology groups with respect to the quotient sheaf $\mathcal{I}(h)/\mathcal{I}(he^{-\psi})$.

In fact,
\[0\rightarrow\mathcal{O}_X(K_X\otimes L)\otimes\mathcal{I}(h)/\mathcal{I}(he^{-\psi})\overset{\iota}\rightarrow\mathcal{L}^{n,0}_{(2),h}/\mathcal{L}^{n,0}_{(2),he^{-\psi}}
\overset{\bar\partial}\rightarrow\mathcal{L}^{n,1}_{(2),h}/\mathcal{L}^{n,1}_{(2),he^{-\psi}}\overset{\bar\partial}\rightarrow\cdots\]
is a fine resolution of $\mathcal{O}_X(K_X\otimes L)\otimes\mathcal{I}(h)/\mathcal{I}(he^{-\psi})$ by Lemma \ref{l:Hormander-dbar}, where $\iota$ is the inclusion homomorphism.

Let $U\subset X$ be an arbitrary Stein open coordinate subset. We define semi-norms $\|\bullet\|_{h,K}'$ on the space of sections $\Gamma(U,\mathcal{L}^{n,q}_{(2),h}/\mathcal{L}^{n,q}_{(2),he^{-\psi}})$ by the quotient topology induced from the isomorphism
\[\Gamma(U,\mathcal{L}^{n,q}_{(2),h}/\mathcal{L}^{n,q}_{(2),he^{-\psi}})
\simeq\Gamma(U,\mathcal{L}^{n,q}_{(2),h})/\Gamma(U,\mathcal{L}^{n,q}_{(2),he^{-\psi}}),\]
where $K$ is any compact subset of $U$, i.e.,
\[\|u'\|_{h,K}':=\inf\{\|u\|_{h,K}:\,u\in\Gamma(U,\mathcal{L}^{n,q}_{(2),h})\text{ and }u\text{ is in the equivalent class }u'\}.\]
Then $\Gamma(U,\mathcal{L}^{n,q}_{(2),h}/\mathcal{L}^{n,q}_{(2),he^{-\psi}})$ together with the family of semi-norms $\|\bullet\|_{h,K}'$ becomes a Fr\'{e}chet space. Then the following induced sequence
\begin{eqnarray*}
0&\rightarrow&\Gamma(U,\mathcal{O}_X(K_X\otimes L)\otimes\mathcal{I}(h)/\mathcal{I}(he^{-\psi}))\overset{\tilde{\iota}}\rightarrow\Gamma(U,\mathcal{L}^{n,0}_{(2),h}/\mathcal{L}^{n,0}_{(2),he^{-\psi}})\\
&\overset{\bar\partial_0}\rightarrow&\Gamma(U,\mathcal{L}^{n,1}_{(2),h}/\mathcal{L}^{n,1}_{(2),he^{-\psi}})\overset{\bar\partial_1}\rightarrow\cdots
\end{eqnarray*}
is exact by Lemma \ref{l:Hormander-dbar}. The homomorphism $\bar\partial_q$ is continuous for each $q$ by the definitions of the semi-norms $\|\bullet\|_{h,K}'$, and the homomorphism $\tilde{\iota}$ is continuous by Lemma \ref{l:local generators uniform estimate}, where \[\Gamma(U,\mathcal{O}_X(K_X\otimes L)\otimes\mathcal{I}(h)/\mathcal{I}(he^{-\psi}))\simeq\frac{\Gamma(U,\mathcal{O}_X(K_X\otimes L)\otimes\mathcal{I}(h))}{\Gamma(U,\mathcal{O}_X(K_X\otimes L)\otimes\mathcal{I}(he^{-\psi})}\] is endowed with the quotient topology induced by the Fr\'{e}chet topology of locally uniform convergence of holomorphic sections.

The corresponding Dolbeault complex is
\[\Gamma(X,\mathcal{L}^{n,0}_{(2),h}/\mathcal{L}^{n,0}_{(2),he^{-\psi}})
\overset{\bar\partial_0}\rightarrow\Gamma(X,\mathcal{L}^{n,1}_{(2),h}/\mathcal{L}^{n,1}_{(2),he^{-\psi}})\overset{\bar\partial_1}\rightarrow
\Gamma(X,\mathcal{L}^{n,2}_{(2),h}/\mathcal{L}^{n,2}_{(2),he^{-\psi}})\overset{\bar\partial_2}\rightarrow\cdots,\]
where we also endow $\Gamma(X,\mathcal{L}^{n,q}_{(2),h}/\mathcal{L}^{n,q}_{(2),he^{-\psi}})$ with the Fr\'{e}chet topology defined similarly as the topology of $\Gamma(U,\mathcal{L}^{n,q}_{(2),h}/\mathcal{L}^{n,q}_{(2),he^{-\psi}})$. Let
\[\bar\partial_{-1}:\,0\rightarrow\Gamma(X,\mathcal{L}^{n,0}_{(2),h}/\mathcal{L}^{n,0}_{(2),he^{-\psi}})\]
be the zero map. Then the $L^2_{\mathrm{loc}}$ Dolbeault cohomology group with respect to the quotient sheaf $\mathcal{I}(h)/\mathcal{I}(he^{-\psi})$ defined by
\[H^{n,q}_{(2)}(X,L,h/he^{-\psi})=\frac{\mathrm{Ker}\,\bar\partial_q}{\mathrm{Im}\,\bar\partial_{q-1}}\quad(0\leq q\leq n)\]
is endowed with the quotient topology.

Similarly, we have

\begin{lem}\label{l:topological Dolbeault isomorphism-h/h psi}
Under the topologies defined above, the group isomorphism
\[H^q(X,\mathcal{O}_X(K_X\otimes L)\otimes\mathcal{I}(h)/\mathcal{I}(he^{-\psi}))\simeq H^{n,q}_{(2)}(X,L,h/he^{-\psi}) \quad (0\leq q\leq n)\]
is a topological isomorphism.
\end{lem}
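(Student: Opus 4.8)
The plan is to mimic the proof of Lemma \ref{l:topological Dolbeault isomorphism-h}, which in turn follows the classical argument of Proposition 12 in \cite{Andreotti-Kas1973} for the comparison between \v{C}ech and Dolbeault cohomology. The point is that all the ingredients of that argument have been arranged to work in the present setting: we have a fine resolution of $\mathcal{O}_X(K_X\otimes L)\otimes\mathcal{I}(h)/\mathcal{I}(he^{-\psi})$ by the sheaves $\mathcal{L}^{n,q}_{(2),h}/\mathcal{L}^{n,q}_{(2),he^{-\psi}}$ (exactness being Lemma \ref{l:Hormander-dbar}), the local $\bar\partial$-sequence \eqref{e:dbar-sequence of section space-h}-analogue over each Stein coordinate chart $U$ is exact, the spaces of sections are Fr\'echet with respect to the semi-norms $\|\bullet\|'_{h,K}$, and all the maps $\bar\partial_q$ and $\tilde\iota$ are continuous. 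So the abstract statement that an exact sequence of Fr\'echet spaces together with a fine-resolution-plus-continuity package produces a topological isomorphism between the \v{C}ech cohomology and the Dolbeault cohomology applies verbatim.

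Concretely, I would first fix a Stein covering $\mathcal{U}=\{U_i\}_{i\in I}$ of $X$ as used to define the topology on $H^q(X,\mathcal{O}_X(K_X\otimes L)\otimes\mathcal{I}(h)/\mathcal{I}(he^{-\psi}))$, and form the double complex $C^p(\mathcal{U},\mathcal{L}^{n,q}_{(2),h}/\mathcal{L}^{n,q}_{(2),he^{-\psi}})$. Each column (fixed $p$) is exact in positive $q$-degree by Lemma \ref{l:Hormander-dbar} applied over the Stein intersections $U_{i_0\cdots i_p}$ — and here one uses that the restriction of the quotient sheaf to a Stein set again has the exactness property, since the $\bar\partial$-solvability estimate of Lemma \ref{l:Hormander-dbar} passes to quotients. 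Each row (fixed $q$) is exact in positive $p$-degree because $\mathcal{L}^{n,q}_{(2),h}/\mathcal{L}^{n,q}_{(2),he^{-\psi}}$ is a fine sheaf (partition of unity), so the usual \v{C}ech resolution has no higher cohomology. The standard diagram chase on this double complex yields, for each $q$, a group isomorphism $\check{H}^q(\mathcal{U},\mathcal{O}_X(K_X\otimes L)\otimes\mathcal{I}(h)/\mathcal{I}(he^{-\psi}))\simeq H^{n,q}_{(2)}(X,L,h/he^{-\psi})$, which combined with Leray's theorem gives the underlying group isomorphism of the lemma.

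The remaining — and genuinely the main — point is that this isomorphism is a homeomorphism. The maps going one way (e.g. sending a Dolbeault class to a \v{C}ech class by successively solving $\bar\partial$ on intersections and taking \v{C}ech differences) are continuous provided the $\bar\partial$-solution operators can be chosen continuously; but one does not need explicit continuous splittings: the cleanest route is to observe that the constructed homomorphism between the two cohomology groups is a \emph{continuous} linear bijection between Fr\'echet spaces — continuity being inherited from continuity of the maps in the complexes and the fact that one only composes finitely many continuous maps and passes to quotients — and then invoke the open mapping theorem (Lemma \ref{l:open mapping theorem}) to conclude that the inverse is also continuous. For this to be legitimate, both cohomology groups must be shown to be Fr\'echet, i.e.\ Hausdorff; in the applications to Theorem \ref{t:Zhou-Zhu-nonreduced-quasipsh} the ambient space is holomorphically convex, so Hausdorffness of $H^q(X,\mathcal{F})$ follows from Lemma \ref{l:Prill1971}, and the quotient topologies on the Dolbeault side are then automatically Hausdorff via the established algebraic isomorphism. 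I expect the bookkeeping in the double-complex diagram chase to be the only delicate part, and precisely the verification that the induced map is continuous (each stage is a finite composition of continuous $\bar\partial_q$, $\tilde\iota$, restriction, and quotient maps) together with the application of Lemma \ref{l:open mapping theorem} to upgrade it to a topological isomorphism; everything else is a direct transcription of the proof of Lemma \ref{l:topological Dolbeault isomorphism-h}.

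\begin{proof}
The proof is completely parallel to that of Lemma \ref{l:topological Dolbeault isomorphism-h}.

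The case $q=0$ is immediate: $H^{n,0}_{(2)}(X,L,h/he^{-\psi})=\Gamma(X,\mathcal{O}_X(K_X\otimes L)\otimes\mathcal{I}(h)/\mathcal{I}(he^{-\psi}))$ with the Fr\'echet topology described above, and Lemma \ref{l:local generators uniform estimate} shows that this topology coincides with the one induced on \v{C}ech $0$-cochains, hence with the topology on $H^0$.

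For $q\geq1$, fix a Stein covering $\mathcal{U}=\{U_i\}_{i\in I}$ of $X$ and consider the double complex
\[K^{p,q}:=C^p\big(\mathcal{U},\mathcal{L}^{n,q}_{(2),h}/\mathcal{L}^{n,q}_{(2),he^{-\psi}}\big),\]
equipped with the \v{C}ech differential $\check\delta$ in the $p$-direction and (the \v{C}ech-ified) $\bar\partial$ in the $q$-direction. For each fixed $p$, the $q$-column is exact in positive degree by Lemma \ref{l:Hormander-dbar} applied over each Stein intersection $U_{i_0\cdots i_p}$ (the solvability estimate passes to the quotient sheaf), and for each fixed $q$, the $p$-row is exact in positive degree because $\mathcal{L}^{n,q}_{(2),h}/\mathcal{L}^{n,q}_{(2),he^{-\psi}}$ is a fine sheaf. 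A standard diagram chase on $K^{\bullet,\bullet}$ therefore produces, for every $q$, an isomorphism of groups
\[\check{H}^q\big(\mathcal{U},\mathcal{O}_X(K_X\otimes L)\otimes\mathcal{I}(h)/\mathcal{I}(he^{-\psi})\big)\;\overset{\sim}{\longrightarrow}\;H^{n,q}_{(2)}(X,L,h/he^{-\psi}),\]
which by Leray's theorem is the asserted group isomorphism.

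It remains to check that this isomorphism is topological. The map from the Dolbeault side to the \v{C}ech side is obtained by a finite sequence of the following operations applied to representatives: restriction to intersections $U_{i_0\cdots i_p}$, solving $\bar\partial$ on such Stein sets via the exact sequence
\[0\to\Gamma\big(U_{i_0\cdots i_p},\mathcal{O}_X(K_X\otimes L)\otimes\mathcal{I}(h)/\mathcal{I}(he^{-\psi})\big)\overset{\tilde\iota}\to\Gamma\big(U_{i_0\cdots i_p},\mathcal{L}^{n,0}_{(2),h}/\mathcal{L}^{n,0}_{(2),he^{-\psi}}\big)\overset{\bar\partial_0}\to\cdots,\]
taking \v{C}ech differences, and passing to the relevant quotient spaces. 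Since $\bar\partial_q$ and $\tilde\iota$ are continuous (by the definitions of the semi-norms $\|\bullet\|'_{h,K}$ and by Lemma \ref{l:local generators uniform estimate}, respectively) and restriction and quotient maps between the Fr\'echet spaces in question are continuous, the induced homomorphism on cohomology is a continuous linear map between Fr\'echet spaces. (Here we use that all the spaces involved are Fr\'echet: the section spaces over Stein sets are Fr\'echet by construction, and the cohomology groups carry the resulting quotient topologies.) Being moreover a bijection, it is a topological isomorphism by the open mapping theorem (Lemma \ref{l:open mapping theorem}), provided both cohomology groups are Hausdorff; this is exactly the content of the corresponding topological statement in the proof of Proposition 12 in \cite{Andreotti-Kas1973}, and in the situations where the lemma is applied the Hausdorff property is guaranteed by Lemma \ref{l:Prill1971}. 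This completes the proof.
\end{proof}
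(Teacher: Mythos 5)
Your proof follows the paper's high-level structure for Lemma \ref{l:topological Dolbeault isomorphism-h}: settle $q=0$ via Lemma \ref{l:local generators uniform estimate}, then for $q\geq1$ set up the double complex $C^p(\mathcal{U},\mathcal{L}^{n,q}_{(2),h}/\mathcal{L}^{n,q}_{(2),he^{-\psi}})$ and run the Andreotti--Kas comparison. That part is consistent with the paper. The gap is in the last step: you apply the open mapping theorem (Lemma \ref{l:open mapping theorem}) to the cohomology isomorphism itself, which requires \emph{both} $H^q(X,\mathcal{O}_X(K_X\otimes L)\otimes\mathcal{I}(h)/\mathcal{I}(he^{-\psi}))$ and $H^{n,q}_{(2)}(X,L,h/he^{-\psi})$ to be Fr\'echet, hence Hausdorff. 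The lemma has no Hausdorffness hypothesis. Moreover, Hausdorffness of the Dolbeault group --- equivalently, closedness of $\mathrm{Im}\,\bar\partial_{q-1}$ --- is precisely what the paper later \emph{derives} from this lemma together with Lemma \ref{l:Prill1971} (see the opening of Subsection \ref{section-proof-of-theorem-Zhou-Zhu-nonreduced-quasipsh}.5). Your assertion that ``the quotient topologies on the Dolbeault side are then automatically Hausdorff via the established algebraic isomorphism'' is false --- a purely algebraic group isomorphism does not transfer topological properties --- so the argument as written is circular.

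The Andreotti--Kas mechanism the paper points to does not need OMT on the cohomology groups. Instead, OMT is applied to the local operators in the resolution: each $\bar\partial_q:\Gamma(U_{i_0\cdots i_p},\mathcal{L}^{n,q}_{(2),h}/\mathcal{L}^{n,q}_{(2),he^{-\psi}})\to\mathrm{Ker}\,\bar\partial_{q+1}$ is a continuous surjection between honest Fr\'echet spaces (the target is a closed subspace; surjectivity is local $\bar\partial$-solvability from Lemma \ref{l:Hormander-dbar}), hence open. This openness is what makes the diagram chase from the Dolbeault side to the \v{C}ech side induce a \emph{continuous} map on cohomology, even when the cohomology groups themselves fail to be Hausdorff. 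This also repairs the other soft spot in your write-up: you claim the Dolbeault$\to$\v{C}ech direction is ``a finite composition of continuous maps,'' but one of those ``maps'' is \emph{solving $\bar\partial$}, which is not a single-valued continuous operation; its multi-valued nature is exactly why one must invoke openness of the local $\bar\partial$ maps rather than continuity of a chosen section. Replacing your OMT-on-cohomology step (and the unjustified Hausdorffness claim) by OMT on the local $\bar\partial$ operators would bring your argument in line with what the paper's citation of Proposition 12 in \cite{Andreotti-Kas1973} is invoking.
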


\begin{remark}\label{r:commutative diagram}
It is not hard to obtain the following commutative diagram
\[\begin{CD}
H^q(X,\mathcal{O}_X(K_X\otimes L)\otimes\mathcal{I}(h)) @>p_X>> H^q(X,\mathcal{O}_X(K_X\otimes L)\otimes\mathcal{I}(h)/\mathcal{I}(he^{-\psi}))\\
@Vi_XVV   @Vj_XVV \\
H^{n,q}_{(2)}(X,L,h) @>P_X>> H^{n,q}_{(2)}(X,L,h/he^{-\psi}),
\end{CD}\]
where $i_X$, $j_X$ are the isomorphisms in Lemma \ref{l:topological Dolbeault isomorphism-h} and Lemma \ref{l:topological Dolbeault isomorphism-h/h psi} respectively, and $p_X$, $P_X$ are the natural homomorphisms.\qed
\end{remark}

\bigskip


\section{Proof of Theorem \ref{t:Zhou-Zhu-nonreduced-quasipsh}}\label{section-proof-of-theorem-Zhou-Zhu-nonreduced-quasipsh}

In this section, we will denote the sheaf $\mathcal{O}_X(K_X\otimes L)$ simply by $\mathcal{K}$. Then $\mathcal{K}\otimes\mathcal{I}(h)$ and $\mathcal{K}\otimes\mathcal{I}(h)/\mathcal{I}(he^{-\psi})$ are coherent analytic sheaves.

Since $X$ is a holomorphically convex complex manifold, Remmert's reduction theorem implies that there exists a proper holomorphic surjection $\pi:X\longrightarrow S$ such that $S$ is a normal Stein space. Then by Grauert's direct image theorem, the $q$-th direct image sheaf $R^q\pi_*\mathcal{F}$ over $S$ of any coherent analytic sheaf $\mathcal{F}$ over $X$ is coherent and we have the group isomorphism (cf. \cite{Grauert1960})
\[H^q(X,\mathcal{F})\simeq H^0(S,R^q\pi_*\mathcal{F}).\]

Therefore, by the Stein property of $S$ and Cartan's Theorem B, in order to prove the surjectivity statement of Theorem \ref{t:Zhou-Zhu-nonreduced-quasipsh}, it is enough to prove the surjectivity of the sheaf homomorphism
\begin{equation}\label{e:def P}
P:\,R^q\pi_*\big(\mathcal{K}\otimes\mathcal{I}(h)\big)\longrightarrow R^q\pi_*\big(\mathcal{K}\otimes\mathcal{I}(h)/\mathcal{I}(he^{-\psi})\big).
\end{equation}

The proof will be divided into the following five subsections.

Let $y_0\in S$ be an arbitrary fixed point and take
\[f\in R^q\pi_*\big(\mathcal{K}\otimes\mathcal{I}(h)/\mathcal{I}(he^{-\psi})\big)_{y_0}.\]
Then there exists a Stein neighborhood $S_0\subset\subset S$ of $y_0$ such that the germ $f$ has a representation (still denoted by $f$) in $H^0\big(S_0,R^q\pi_*(\mathcal{K}\otimes\mathcal{I}(h)/\mathcal{I}(he^{-\psi}))\big)$, i.e.,
\[f\in H^q\big(X_0,\mathcal{K}\otimes\mathcal{I}(h)/\mathcal{I}(he^{-\psi})\big),\]
where $X_0:=\pi^{-1}(S_0)$.

\subsection{Construction of a smooth
representation $\tilde{f}$ of $f$ from $Y\cap X_0$ to $X_0$, where $Y$ is defined in Remark \ref{r:Y-definition}.}\label{subsection 1} \quad

Let $\mathcal{U}=\{U_i\}_{i\in I}$ be a locally finite Stein covering of $X_0$. By Leray's theorem, $f$ is represented by a \v{C}ech $q$-cocycle $\{c_{i_0\cdots i_q}\}$ with
\[c_{i_0\cdots i_q}\in \Gamma\big(U_{i_0\cdots i_q},\mathcal{K}\otimes\mathcal{I}(h)/\mathcal{I}(he^{-\psi})\big),\]
where $U_{i_0\cdots i_q}$ denotes $U_{i_0}\cap\cdots\cap U_{i_q}$.

Since the natural homomorphism
\[\Gamma\big(U_{i_0\cdots i_q},\mathcal{K}\otimes\mathcal{I}(h)\big)\longrightarrow\Gamma\big(U_{i_0\cdots i_q},\mathcal{K}\otimes\mathcal{I}(h)/\mathcal{I}(he^{-\psi})\big)\]
is surjective by the Stein property of $U_{i_0\cdots i_q}$, we can assume that
\[c_{i_0\cdots i_q}\in\Gamma\big(U_{i_0\cdots i_q},\mathcal{K}\otimes\mathcal{I}(h)\big).\]

By the explicit expression of the isomorphism between \v{C}ech cohomology groups and $C^\infty$ Dolbeault cohomology groups (cf. \cite{Andreotti-Kas1973}, \cite{Demailly-book} or \cite{Cao-Demailly-Matsumura2017}), $f$ is represented by an $(n,q)$-form
\[\tilde{f}:=\sum\limits_{i_0,\cdots, i_q}c_{i_0\cdots i_q}\wedge\xi_{i_q}\bar\partial\xi_{i_0}\wedge\cdots\wedge\bar\partial\xi_{i_{q-1}},\]
where $\{\xi_i\}_{i\in I}$ is a partition of unity subordinate to $\mathcal{U}$.

Let the symbols $\mathcal{L}^{n,q}_{(2),h}$ and $\bar\partial_q$ be as in Section \ref{section-lemmas} (cf. the arguments before Lemma \ref{l:topological Dolbeault isomorphism-h} and Lemma \ref{l:topological Dolbeault isomorphism-h/h psi}). Then $\tilde{f}$ is smooth on $X_0$, $\tilde{f}\in\Gamma(X_0,\mathcal{L}^{n,q}_{(2),h})$ and $\bar\partial_q\tilde{f}=0$ in $\Gamma(X_0,\mathcal{L}^{n,q+1}_{(2),h}/\mathcal{L}^{n,q+1}_{(2),he^{-\psi}})$, i.e.,
\[\bar\partial\tilde{f}\in\Gamma(X_0,\mathcal{L}^{n,q+1}_{(2),he^{-\psi}}).\]

Let $h_0$ be any fixed smooth metric of $L$ on $X$. Then
$h=h_0e^{-\phi}$ for some global function $\phi$ on $X$, which is quasi-psh
by the assumption in the theorem.

Let $S_1\subset\subset S_0$ be any fixed Stein neighborhood of $y_0$ and let $X_1:=\pi^{-1}(S_1)$. Then we have
\[\int_{X_1}\big|\bar\partial \tilde{f}\big|_{\omega,h_0}^2e^{-\phi-\psi}dV_{X,\omega}<+\infty.\]

\subsection{Approximation of singular weights.}\label{subsection 2} \quad

By the assumptions in Theorem \ref{t:Zhou-Zhu-nonreduced-quasipsh}, the following two inequalities hold on $X_0$:
\[\sqrt{-1}\partial\bar\partial\phi+\sqrt{-1}\partial\bar\partial\psi\geq-\sqrt{-1}\Theta_{L,h_0}\]
and
\[\sqrt{-1}\partial\bar\partial\phi+(1+\alpha)\sqrt{-1}\partial\bar\partial\psi\geq-\sqrt{-1}\Theta_{L,h_0},\]
where $\alpha$ can be assumed to be a positive number since $X_0\subset\subset X$.

By the two curvature inequalities above, $\phi+\psi$ and $\phi+(1+\alpha)\psi$ are equal to quasi-psh functions on $X_0$ almost everywhere. Without loss of generality, we can assume that they are quasi-psh on $X_0$.

Since there must exist a continuous
nonnegative $(1,1)$-form $\varpi$ on $X$ such that
\[\bigg(\frac{\sqrt{-1}}{2\pi}\Theta_{T_X}+\varpi\otimes \mathrm{Id}_{T_X}\bigg)
(\kappa_1\otimes \kappa_2,\kappa_1\otimes \kappa_2)\geq0\quad
(\forall\kappa_1,\kappa_2\in T_X)\] holds on a neighborhood of $\overline{X_1}$, by applying Lemma \ref{l:regularization-two-quasipsh}
to the case $\varphi_1:=\phi$, $\varphi_2:=\psi$, $\gamma_1=\gamma_2:=-\sqrt{-1}\Theta_{L,h_0}$ and $\Omega:=X_1$, we obtain two family of upper semicontinuous functions $\{\phi_{\varsigma,\rho}\}$ and $\{\psi_{\varsigma,\rho}\}$ ($\varsigma\in(0,+\infty)$ and $\rho\in(0,\rho_1)$ for some positive number $\rho_1$) defined on a neighborhood of $\overline{X_1}$ satisfying the conclusion of Lemma \ref{l:regularization-two-quasipsh}.

Let $n_1$ be a positive integer such that $n_1\omega\geq\varpi$ on $\overline{X_1}$, and let $\varsigma_\rho:=\frac{\delta_\rho}{n_1\pi}$, where $\delta_\rho$ is as in the conclusion of Lemma \ref{l:regularization-two-quasipsh}. Denote $\phi_{\varsigma_\rho,\rho}$, $\psi_{\varsigma_\rho,\rho}$ and $\Sigma_{\varsigma_\rho}$ (cf. Lemma \ref{l:regularization-two-quasipsh}) simply by $\phi_\rho$, $\psi_\rho$ and $\Sigma_\rho$ respectively. It is obvious that $\Sigma_\rho\subset\Sigma$ for any $\rho\in(0,\rho_1)$ ($\Sigma$ is defined as in Lemma \ref{l:regularization-two-quasipsh}). Then we have
\begin{enumerate}
\item[$(i)$]
$\phi_\rho$ and $\psi_\rho$ are smooth on $\overline{X_1}\setminus\Sigma_\rho$, $\lim\limits_{\rho\rightarrow 0}\phi_\rho=\phi$ on $\overline{X_1}\setminus\Sigma$, and $\lim\limits_{\rho\rightarrow 0}\psi_\rho=\psi$ on $\overline{X_1}\setminus\Sigma$,
\item[$(ii)$]
$\phi_\rho+\psi_\rho$ is quasi-psh on a neighborhood of $\overline{X_1}$ up to its values on $\Sigma_\rho$, increasing with respect to
$\rho$ on $\overline{X_1}\setminus\Sigma$, and converges to $\phi+\psi$ on $\overline{X_1}\setminus\Sigma$ as $\rho\rightarrow 0$,
\item[$(iii)$]
$\phi_\rho\geq\phi$ on $X_1$, $\psi_\rho\leq\sup\limits_{X_0}\psi$ on $X_1$, and $\bar\partial\psi_\rho\in L^1$ on $X_1$,
\item[$(iv)$]
$\sqrt{-1}\Theta_{L,h_0}+\sqrt{-1}\partial\bar\partial\phi_\rho+\sqrt{-1}\partial\bar\partial\psi_\rho
\geq-2\delta_\rho\omega$ on $X_1$,
\item[$(v)$]
$\sqrt{-1}\Theta_{L,h_0}+\sqrt{-1}\partial\bar\partial\phi_\rho+(1+\alpha)\sqrt{-1}\partial\bar\partial\psi_\rho
\geq-2\delta_\rho\omega$ on $X_1$,
\end{enumerate}
where $\{\delta_\rho\}$ is an increasing family of positive numbers
such that $\lim\limits_{\rho\rightarrow 0}\delta_\rho=0$.

\subsection{Construction of additional weights and twist factors.} \quad

For any $t\in(-\infty,0)$, let $\sigma_t$ be the smooth function on $\mathbb{R}$ defined by
\[\sigma_t(s):=\log(e^s+e^t).\]

Without loss of generality, we can assume that $\sup\limits_{X_0}\psi<0$.
Then it follows from the property $(iii)$ in Subsection \ref{subsection 2} that there exists a negative number
$t_0$ such that
$\sigma_t(\psi_\rho)<0$ on $X_1$ for any $t\in(-\infty,t_0)$ and for any $\rho\in(0,\rho_1)$.

Let $\zeta$ and $\chi$ be the solution of the following system of ODEs defined on $(-\infty,0)$:
\begin{numcases}{}
\chi(t)\zeta'(t)-\chi'(t)=1,\label{non-reduced-ode1}\\
\bigg(\chi(t)+\frac{\big(\chi'(t)\big)^2}{\chi(t)
\zeta''(t)-\chi''(t)}\bigg)e^{\zeta(t)+t}=\frac{1}{\alpha}+1,\label{non-reduced-ode2}
\end{numcases}
where we assume that $\zeta$ and $\chi$ are all
smooth on $(-\infty,0)$, and that $\inf\limits_{t<0}\zeta(t)=0$, $\inf\limits_{t<0}\chi(t)=\frac{1}{\alpha}$,
$\zeta'>0$ and $\chi'<0$ on
$(-\infty,0)$. By the similar calculation as in \cite{Guan-Zhou2013b} or \cite{Zhou-Zhu2015}, we can solve the system of ODEs and get the solution
\begin{numcases}{}
\zeta(t)=\log\frac{\frac{1}{\alpha}+1}{\frac{1}{\alpha}+1-e^t},\nonumber\\
\chi(t)=\frac{\frac{1}{\alpha^2}-1+e^t-(\frac{1}{\alpha}+1)t}{\frac{1}{\alpha}+1-e^t}.\nonumber
\end{numcases}

Let $h_{t,\rho}$ be the new metric on the line bundle $L$ over
$X_1\setminus\Sigma_\rho$ defined by
\[h_{t,\rho}:=
h_0e^{-\phi_\rho-\psi_\rho-\zeta(\sigma_t(\psi_\rho))}.\]

Let $\tau_{t,\rho}:=\chi(\sigma_t(\psi_\rho))$ and
$A_{t,\rho}:=\frac{(\chi'(\sigma_t(\psi_\rho)))^2}{\chi(\sigma_t(\psi_\rho))
\zeta''(\sigma_t(\psi_\rho))-\chi''(\sigma_t(\psi_\rho))}$. Set
$\mathrm{B}_{t,\rho} =[\Theta_{t,\rho},\,\Lambda
]$ on $X_1\setminus\Sigma_\rho$, where
\[\Theta_{t,\rho}:=\tau_{t,\rho}\sqrt{-1}
\Theta_{L,h_{t,\rho}}
-\sqrt{-1}\partial\bar\partial\tau_{t,\rho}
-\sqrt{-1}\frac{\partial\tau_{t,\rho}\wedge
\bar\partial\tau_{t,\rho}}{A_{t,\rho}}.\]
We want to prove
\begin{equation}\label{ie:non reduced twisted cuvature}
\Theta_{t,\rho}\big|_{X_1\setminus\Sigma_\rho}\geq
\frac{e^{\psi_\rho+t}}{(e^{\psi_\rho}+e^t)^2}\sqrt{-1}\partial\psi_\rho\wedge
\bar\partial\psi_\rho-2\chi(\sigma_t(\psi_\rho))\delta_\rho\omega.
\end{equation}

It follows from \eqref{non-reduced-ode1} that
\begin{eqnarray*}
& &\Theta_{t,\rho}\big|_{X_1\setminus\Sigma_\rho}\\
&=&\chi(\sigma_t(\psi_\rho))\big(\sqrt{-1}\Theta_{L,h_0}
+\sqrt{-1}\partial\bar\partial
\phi_\rho+\sqrt{-1}\partial\bar\partial\psi_\rho\big)\\
& &+\big(\chi(\sigma_t(\psi_\rho))
\zeta'(\sigma_t(\psi_\rho))
-\chi'(\sigma_t(\psi_\rho))\big)
\sqrt{-1}\partial\bar\partial\sigma_t(\psi_\rho)\\
&=&\chi(\sigma_t(\psi_\rho))\big(\sqrt{-1}\Theta_{L,h_0}
+\sqrt{-1}\partial\bar\partial
\phi_\rho+\sqrt{-1}\partial\bar\partial\psi_\rho\big)+\sqrt{-1}\partial\bar\partial
(\sigma_t(\psi_\rho))\\
&=&\chi(\sigma_t(\psi_\rho))\big(\sqrt{-1}\Theta_{L,h_0}
+\sqrt{-1}\partial\bar\partial
\phi_\rho+\sqrt{-1}\partial\bar\partial\psi_\rho\big)\\
& &+\frac{e^{\psi_\rho}}{e^{\psi_\rho}+e^t}\sqrt{-1}\partial\bar\partial\psi_\rho
+\frac{e^{\psi_\rho+t}}{(e^{\psi_\rho}+e^t)^2}\sqrt{-1}\partial\psi_\rho\wedge
\bar\partial\psi_\rho.
\end{eqnarray*}
Since $\chi\geq\chi(0)=\frac{1}{\alpha}$ on $(-\infty,0)$, it follows from the properties $(iv)$ and $(v)$ in Subsection \ref{subsection 2} that
\begin{eqnarray*}
& &\chi(\sigma_t(\psi_\rho))\big(\sqrt{-1}\Theta_{L,h_0}
+\sqrt{-1}\partial\bar\partial
\phi_\rho+\sqrt{-1}\partial\bar\partial\psi_\rho\big)
+\frac{e^{\psi_\rho}}{e^{\psi_\rho}+e^t}\sqrt{-1}\partial\bar\partial\psi_\rho\\
&=&\chi(\sigma_t(\psi_\rho))\big(\sqrt{-1}\Theta_{L,h_0}
+\sqrt{-1}\partial\bar\partial
\phi_\rho+\sqrt{-1}\partial\bar\partial\psi_\rho+2\delta_\rho\omega\big)
-2\chi(\sigma_t(\psi_\rho))\delta_\rho\omega\\
& &+\frac{e^{\psi_\rho}}{\alpha(e^{\psi_\rho}+e^t)}\cdot\alpha\sqrt{-1}\partial\bar\partial\psi_\rho\\
&\geq&\frac{e^{\psi_\rho}}{\alpha(e^{\psi_\rho}+e^t)}
\big(\sqrt{-1}\Theta_{L,h_0}
+\sqrt{-1}\partial\bar\partial
\phi_\rho+\sqrt{-1}\partial\bar\partial\psi_\rho+2\delta_\rho\omega+\alpha\sqrt{-1}\partial\bar\partial\psi_\rho\big)\\
& &-2\chi(\sigma_t(\psi_\rho))\delta_\rho\omega\\
&\geq&-2\chi(\sigma_t(\psi_\rho))\delta_\rho\omega
\end{eqnarray*}
on $X_1\setminus\Sigma_\rho$. Hence we get \eqref{ie:non reduced twisted cuvature} as desired.

We choose an increasing family of
positive numbers
$\{\rho_t\}_{t\in(-\infty,t_0)}$ such that $\rho_t<\rho_1$ for any $t$,
$\lim\limits_{t\rightarrow-\infty}\rho_t=0$, and
\begin{equation}\label{ie:non reduced delta1}
2\chi(t)\delta_{\rho_t}<\frac{e^{2t}}{q+1}\text{ for any }t.
\end{equation}

Since $\sigma_t(\psi_\rho)\geq t$ on
$X_1$ and $\chi$ is decreasing, we have
$\chi(\sigma_t(\psi_\rho))\leq \chi(t)$
on $X_1$. Then it follows from \eqref{ie:non reduced twisted cuvature} and
\eqref{ie:non reduced delta1} that
\[\Theta_{t,\rho}
\big|_{X_1\setminus\Sigma_\rho}\geq
\frac{e^{\psi_\rho+t}}{(e^{\psi_\rho}+e^t)^2}\sqrt{-1}\partial\psi_\rho\wedge
\bar\partial\psi_\rho-\frac{e^{2t}}{q+1}\omega,\quad\forall\,\rho\in(0,\rho_t].\]
Hence
\begin{equation}\label{ie:non reduced B-estimate}
\mathrm{B}_{t,\rho}
+e^{2t}\mathrm{I}
\geq\bigg[\frac{e^{\psi_\rho+t}}{(e^{\psi_\rho}+e^t)^2}\sqrt{-1}\partial\psi_\rho\wedge
\bar\partial\psi_\rho,\,\Lambda \bigg]=\frac{e^{\psi_\rho+t}}{(e^{\psi_\rho}+e^t)^2}\mathrm{T}_{\bar\partial\psi_\rho}
\mathrm{T}_{\bar\partial\psi_\rho}^*\geq0
\end{equation}
holds on $X_1\setminus\Sigma_\rho$ for any $\rho\in(0,\rho_t]$ as an operator on
$(n,q+1)$-forms, where $\mathrm{T}_{\bar\partial\psi_\rho}$ denotes the
operator $\bar\partial\psi_\rho\wedge\bullet$ and
$\mathrm{T}_{\bar\partial\psi_\rho}^*$ is its Hilbert adjoint
operator.

\subsection{Construction of suitably truncated forms and solving
$\bar\partial$ globally with $L^2$ estimates.}\label{subsection 4} \quad

It is easy to construct a smooth function
$\theta:\mathbb{R}\longrightarrow[0,1]$ such that $\theta=1$ on
$(-\infty,0]$, $\theta=0$ on $[1,+\infty)$
and $|\theta'|\leq 2$ on $\mathbb{R}$.

Define $g_{t,\rho}=\bar\partial
\big(\theta(\psi_\rho-t)
\tilde{f}\big)$ on $X_1\setminus\Sigma_\rho$, where $\tilde{f}$ is as in Subsection \ref{subsection 1}. Then $\bar\partial g_{t,\rho}=0$ on $X_1\setminus\Sigma_\rho$ and
\[g_{t,\rho}=g_{1,t,\rho}+g_{2,t,\rho},\]
where $g_{1,t,\rho}:=\theta'(\psi_\rho-t)\bar\partial\psi_\rho\wedge\tilde{f}$ and
$g_{2,t,\rho}:=\theta(\psi_\rho-t)\bar\partial \tilde{f}$.

The Cauchy-Schwarz inequality and \eqref{ie:non reduced B-estimate} imply that, on $X_1\setminus\Sigma_\rho$,
\begin{eqnarray}
& &\langle(\mathrm{B}_{t,\rho}+2e^{2t} \mathrm{I})^{-1}
g_{t,\rho},g_{t,\rho}
\rangle_{\omega,h_{t,\rho}}\label{ie:non reduced estimate with delta}\\
&\leq&2\langle(\mathrm{B}_{t,\rho}+2e^{2t}
\mathrm{I})^{-1} g_{1,t,\rho},g_{1,t,\rho}
\rangle_{\omega,h_{t,\rho}} +2\langle(\mathrm{B}_{t,\rho}+2e^{2t}
\mathrm{I})^{-1} g_{2,t,\rho},g_{2,t,\rho}
\rangle_{\omega,h_{t,\rho}}\nonumber\\
&\leq&2\langle(\mathrm{B}_{t,\rho}+e^{2t}
\mathrm{I})^{-1} g_{1,t,\rho},g_{1,t,\rho}
\rangle_{\omega,h_{t,\rho}} +2\langle\frac{1}{e^{2t}}
g_{2,t,\rho},g_{2,t,\rho}
\rangle_{\omega,h_{t,\rho}}\nonumber
\end{eqnarray}
for any $\rho\in(0,\rho_t]$.

Since $|\theta'|\leq 2$ on $\mathbb{R}$, $\theta'=0$ on $\mathbb{R}\setminus(0,1)$, $\zeta\geq0$ and $\phi_\rho\geq\phi$ on $X_1$ (cf. the property $(iii)$ in Subsection \ref{subsection 2}), we obtain from \eqref{ie:non reduced B-estimate} that
\begin{eqnarray*}
& &\int_{X_1\setminus\Sigma_\rho}\langle
(\mathrm{B}_{t,\rho}+e^{2t} \mathrm{I})^{-1}
g_{1,t,\rho},g_{1,t,\rho}
\rangle_{\omega,h_{t,\rho}}dV_{X,\omega}\\
&\leq&\int_{X_1}
\frac{4(e^{\psi_\rho}+e^t)^2}
{e^{2\psi_\rho+t}}|\tilde{f}|^2_{\omega,h_0}
e^{-\phi_\rho}\mathbb{I}_{\{t<\psi_\rho<t+1\}}dV_{X,\omega}\\
&\leq&\frac{4(e+1)^2}{e^t}\int_{X_1}
|\tilde{f}|^2_{\omega,h_0}
e^{-\phi}\mathbb{I}_{\{t<\psi_\rho<t+1\}}dV_{X,\omega}
\end{eqnarray*}
for any $\rho\in(0,\rho_t]$, where $\mathbb{I}_{\{t<\psi_\rho<t+1\}}$ is the characteristic function associated to the set $\{t<\psi_\rho<t+1\}$.

Since $\theta=0$ on $[1,+\infty)$, $\zeta\geq0$ and
$\phi_\rho+\psi_\rho\geq\phi+\psi$ on
$X_1\setminus\Sigma$ (cf. the property $(ii)$ in Subsection \ref{subsection 2}), we get
\[\int_{X_1\setminus\Sigma_\rho}\langle\frac{1}
{e^{2t}}g_{2,t,\rho},g_{2,t,\rho}
\rangle_{\omega,h_{t,\rho}}dV_{X,\omega}
\leq\frac{1}{e^{2t}}\int_{X_1}
|\bar\partial \tilde{f}|^2_{\omega,h_0}e^{-\phi-\psi}\mathbb{I}_{\{\psi_\rho<t+1\}}dV_{X,\omega}.\]

Therefore, it follows from \eqref{ie:non reduced estimate with delta} that
\begin{eqnarray*}
C_\rho(t)&:=&\int_{X_1\setminus\Sigma_\rho}\langle(\mathrm{B}_{t,\rho}
+2e^{2t} \mathrm{I})^{-1}
g_{t,\rho},g_{t,\rho}
\rangle_{\omega,h_{t,\rho}}dV_{X,\omega}\\
&\leq&\frac{8(e+1)^2}{e^t}\int_{X_1}
|\tilde{f}|^2_{\omega,h_0}
e^{-\phi}\mathbb{I}_{\{t<\psi_\rho<t+1\}}dV_{X,\omega}\\
& &+\frac{2}{e^{2t}}\int_{X_1}
|\bar\partial \tilde{f}|^2_{\omega,h_0}e^{-\phi-\psi}\mathbb{I}_{\{\psi_\rho<t+1\}}dV_{X,\omega}
\end{eqnarray*}
for any $\rho\in(0,\rho_t]$.

Since we have obtained
\[\int_{X_1}|\tilde{f}|^2_{\omega,h_0}e^{-\phi}dV_{X,\omega}<+\infty\]
and
\[\int_{X_1}|\bar\partial \tilde{f}|^2_{\omega,h_0}e^{-\phi-\psi}dV_{X,\omega}<+\infty\]
in Subsection \ref{subsection 1}, it follows from the property $\lim\limits_{\rho\rightarrow0}\psi_\rho=\psi$ on $X_1\setminus\Sigma$ (cf. property $(i)$ in Subsection \ref{subsection 2}) and Fatou's lemma that
\begin{eqnarray}
C(t):=\varlimsup\limits_{\rho\rightarrow0}C_\rho(t)
&\leq&\frac{8(e+1)^2}{e^t}\int_{X_1}
|\tilde{f}|^2_{\omega,h_0}
e^{-\phi}\mathbb{I}_{\{t\leq\psi\leq t+1\}}dV_{X,\omega}\label{ie:non reduced estimate-C-t}\\
& &+\frac{2}{e^{2t}}\int_{X_1}
|\bar\partial \tilde{f}|^2_{\omega,h_0}e^{-\phi-\psi}\mathbb{I}_{\{\psi\leq t+1\}}dV_{X,\omega}\nonumber
\end{eqnarray}
and
\begin{equation}\label{e:non reduced limit-C-t}
\lim\limits_{t\rightarrow-\infty}e^{2t}C(t)=0.
\end{equation}

Since $X_1$ is a holomorphically convex K\"{a}hler manifold, $X_1$ carries a complete K\"{a}hler metric. Hence $X_1\setminus\Sigma_\rho$ carries a complete K\"{a}hler metric by Lemma \ref{l:Demailly1982-complete-metric}. Then by Lemma \ref{l:Demailly-non complete metric}, there exists an $L$-valued $(n,q)$-form $u_{t,\rho}$ and an $L$-valued $(n,q+1)$-form $v_{t,\rho}$ such that
\begin{equation}\label{e:non reduced dbar}
\bar\partial u_{t,\rho}
+\sqrt{2}e^tv_{t,\rho} =g_{t,\rho}\quad\text{on}\quad X_1\setminus\Sigma_\rho
\end{equation}
and
\begin{eqnarray*}
& &\int_{X_1\setminus\Sigma_\rho}\frac{|u_{t,\rho}|^2
_{\omega,h_0}e^{-\phi_\rho-\psi_\rho-\zeta(\sigma_t(\psi_\rho))}}
{\tau_{t,\rho}+A_{t,\rho}}dV_{X,\omega}\\
& &+\int_{X_1\setminus\Sigma_\rho}|v_{t,\rho}|^2_{\omega,h_0}
e^{-\phi_\rho-
\psi_\rho-\zeta(\sigma_t(\psi_\rho))}dV_{X,\omega}\\
&\leq& C_\rho(t)
\end{eqnarray*}
for any $\rho\in(0,\rho_t]$.

Since
\[\frac{e^{-\zeta(\sigma_t(\psi_\rho))}}{\tau_{t,\rho}+A_{t,\rho}}=\frac{\alpha e^{\sigma_t(\psi_\rho)}}{1+\alpha}\geq\frac{\alpha e^t}{1+\alpha}\]
by \eqref{non-reduced-ode2} and
\[e^{-\zeta(\sigma_t(\psi_\rho))}\geq e^{-\zeta(0)}=\frac{1}{1+\alpha},\]
we get
\begin{equation}\label{ie:non reduced estimate-u-and-h}
\int_{X_1}\frac{\alpha e^t|u_{t,\rho}|^2
_{\omega,h_0}e^{-\phi_\rho-\psi_\rho}}
{1+\alpha}dV_{X,\omega}+\int_{X_1}\frac{|v_{t,\rho}|^2_{\omega,h_0}
e^{-\phi_\rho-
\psi_\rho}}{1+\alpha}dV_{X,\omega}
\leq C_\rho(t)
\end{equation}
for any $\rho\in(0,\rho_t]$.

Since $\sup\limits_{\overline{X_1}\setminus\Sigma}(\phi_\rho+\psi_\rho)<+\infty$ (cf. the property $(ii)$ in Subsection \ref{subsection 2}), we get that
$u_{t,\rho}\in
L^2$ and $v_{t,\rho}\in
L^2$. Since $\bar\partial\psi_\rho\in L^1$ on $X_1$ (cf. the property $(iii)$ in Subsection \ref{subsection 2}), we have $g_{t,\rho}=g_{1,t,\rho}+g_{2,t,\rho}\in L^1$ on $X_1$. Then it follows from
\eqref{e:non reduced dbar} and Lemma \ref{l:extension} that
\begin{equation}\label{e:non reduced dbar on X_1}
\bar\partial u_{t,\rho}
+\sqrt{2}e^tv_{t,\rho} =g_{t,\rho}=\bar\partial
\big(\theta(\psi_\rho-t)
\tilde{f}\big)\quad\text{on}\quad X_1.
\end{equation}

Since $\lim\limits_{\rho\rightarrow0}\psi_\rho=\psi$ on $X_1\setminus\Sigma$ (cf. the property $(i)$ in Subsection \ref{subsection 2}), $\theta(\psi_\rho-t)
\tilde{f}$ converges to $\theta(\psi-t)
\tilde{f}$ in $L^2$ as $\rho\rightarrow0$ by Lebesgue's dominated convergence theorem. Hence $\bar\partial
\big(\theta(\psi_\rho-t)
\tilde{f}\big)$ converges to $\bar\partial
\big(\theta(\psi-t)
\tilde{f}\big)$ in the sense of currents as $\rho\rightarrow0$.

Let $t$ be fixed and let $\{\rho_j\}_{j=2}^{+\infty}$ be a decreasing sequence of positive numbers such that $\rho_2<\rho_t$ and $\lim\limits_{j\rightarrow+\infty}\rho_j=0$.

Since $\phi_{\rho_j}+\psi_{\rho_j}$ decreases to $\phi+\psi$ on $\overline{X_1}\setminus\Sigma$ as $j\rightarrow+\infty$ (cf. the property $(ii)$ in Subsection \ref{subsection 2}), by extracting weak limits of $\{u_{t,\rho_j}\}_{j=2}^{+\infty}$ as $j\rightarrow+\infty$, it follows from \eqref{ie:non reduced estimate-u-and-h} and the diagonal argument that there exists an $L^2$ $L$-valued $(n,q)$-form $u_t$ such that a subsequence $\{u_{t,\rho_{j_r}}\}_{r=1}^{+\infty}$ of $\{u_{t,\rho_j}\}_{j=2}^{+\infty}$ converges to $u_t$ weakly in $L^2_{(n,q)}(X_1,e^{-\phi_{\rho_i}-\psi_{\rho_i}}dV_{X,\omega})$ for any positive integer $i$ as $r\rightarrow+\infty$.

Hence the Banach-Steinhaus Theorem implies that, for any positive integer $i$,
\begin{eqnarray*}
\int_{X_1}\frac{\alpha e^t|u_t|^2_{\omega,h_0}e^{-\phi_{\rho_i}-\psi_{\rho_i}}}{1+\alpha}dV_{X,\omega}&\leq&
\varliminf_{r\rightarrow+\infty}\int_{X_1}\frac{\alpha e^t|u_{t,\rho_{j_r}}|^2_{\omega,h_0}e^{-\phi_{\rho_i}-\psi_{\rho_i}}}{1+\alpha}dV_{X,\omega}\\
&\leq&\varliminf_{r\rightarrow+\infty}\int_{X_1}\frac{\alpha e^t|u_{t,\rho_{j_r}}|^2_{\omega,h_0}e^{-\phi_{\rho_{j_r}}-\psi_{\rho_{j_r}}}}{1+\alpha}dV_{X,\omega}.
\end{eqnarray*}
Then Fatou's lemma implies that
\begin{eqnarray*}
\int_{X_1}\frac{\alpha e^t|u_t|^2_{\omega,h_0}e^{-\phi-\psi}}{1+\alpha}dV_{X,\omega}&\leq&
\varliminf_{i\rightarrow+\infty}\int_{X_1}\frac{\alpha e^t|u_t|^2_{\omega,h_0}e^{-\phi_{\rho_i}-\psi_{\rho_i}}}{1+\alpha}dV_{X,\omega}\\
&\leq&\varliminf_{r\rightarrow+\infty}\int_{X_1}\frac{\alpha e^t|u_{t,\rho_{j_r}}|^2_{\omega,h_0}e^{-\phi_{\rho_{j_r}}-\psi_{\rho_{j_r}}}}{1+\alpha}dV_{X,\omega}.
\end{eqnarray*}

Similar weak limit argument can also be applied to subsequences of $\{v_{t,\rho_j}\}_{j=2}^{+\infty}$.

In conclusion, by \eqref{e:non reduced dbar on X_1}, \eqref{ie:non reduced estimate-u-and-h} and \eqref{ie:non reduced estimate-C-t}, there exist $L^2$ forms $u_t$ and $v_t$ such that
\begin{equation}\label{e:non reduced dbar on X_1 limit-1}
\bar\partial u_t
+\sqrt{2}e^tv_t=\bar\partial
\big(\theta(\psi-t)
\tilde{f}\big)\quad\text{on}\quad X_1
\end{equation}
and
\begin{equation}\label{ie:non reduced estimate-u-and-h no rho}
\int_{X_1}\frac{\alpha e^t|u_t|^2
_{\omega,h_0}e^{-\phi-\psi}}
{1+\alpha}dV_{X,\omega}+\int_{X_1}\frac{|v_t|^2_{\omega,h_0}
e^{-\phi-
\psi}}{1+\alpha}dV_{X,\omega}
\leq C(t).
\end{equation}

\eqref{e:non reduced limit-C-t} and \eqref{ie:non reduced estimate-u-and-h no rho} imply that
\begin{equation}\label{e:non reduced error term limit-1}
\lim\limits_{t\rightarrow-\infty}\int_{X_1}|\sqrt{2}e^tv_t|^2_{\omega,h_0}e^{-\phi-\psi}dV_{X,\omega}
\leq\lim\limits_{t\rightarrow-\infty}2(1+\alpha)e^{2t}C(t)=0.
\end{equation}

\subsection{Final conclusion.} \quad

Some ideas in this subsection come from \cite{Cao-Demailly-Matsumura2017}.

By Lemma \ref{l:topological Dolbeault isomorphism-h}, Lemma \ref{l:topological Dolbeault isomorphism-h/h psi} and the commutative diagram in Remark \ref{r:commutative diagram}, we will identify the sheaf (or \v{C}ech) cohomology groups with the $L^2_{\mathrm{loc}}$ Dolbeault cohomology groups, i.e., $i_{X_1}=\mathrm{Id}$, $j_{X_1}=\mathrm{Id}$ and $p_{X_1}=P_{X_1}$.

Since $H^{q+1}(X_1,\mathcal{K}\otimes\mathcal{I}(h))$ ($0\leq q\leq n$) is Hausdorff by Lemma \ref{l:Prill1971},
Lemma \ref{l:topological Dolbeault isomorphism-h} implies that the image of the map
\[\bar\partial_q:\,\Gamma(X_1,\mathcal{L}^{n,q}_{(2),h})\longrightarrow\Gamma(X_1,\mathcal{L}^{n,q+1}_{(2),h})\quad(0\leq q\leq n)\]
is closed. Hence
\[\Gamma(X_1,\mathcal{L}^{n,q}_{(2),h})\overset{\bar\partial_{q}}\longrightarrow\mathrm{Im}\,\bar\partial_q\quad(0\leq q\leq n)\]
is a continuous linear surjection between Fr\'{e}chet spaces. Therefore, the open mapping theorem (Lemma \ref{l:open mapping theorem}) implies that this map is open. Hence $\bar\partial_q$ induces a topological isomorphism of Fr\'{e}chet spaces
\begin{equation}\label{e:isomorphism induced by dbar}
\frac{\Gamma(X_1,\mathcal{L}^{n,q}_{(2),h})}{\mathrm{Ker}\,\bar\partial_q}\simeq\mathrm{Im}\,\bar\partial_q\quad(0\leq q\leq n).
\end{equation}

Let $w_t:=\sqrt{2}e^tv_t$. Then \eqref{e:non reduced dbar on X_1 limit-1} and \eqref{ie:non reduced estimate-u-and-h no rho} implies that
\[w_t=\bar\partial
\big(\theta(\psi-t)
\tilde{f}-u_t\big)\in \mathrm{Im}\,\bar\partial_q.\]

Since \eqref{e:non reduced error term limit-1} implies that
\[\lim\limits_{t\rightarrow-\infty}\int_{X_1}|w_t|^2_{\omega,h}dV_{X,\omega}=0,\]
it follows from \eqref{e:isomorphism induced by dbar} that there exists a sequence of $L$-valued $(n,q)$-forms $s_t\in\Gamma(X_1,\mathcal{L}^{n,q}_{(2),h})$ such that
$\bar\partial s_t=w_t$ on $X_1$ and
\[\lim\limits_{t\rightarrow-\infty}\|s_t\|_{h,K}=0\]
for any compact set $K$ in open coordinate charts of $X_1$, where $\|\bullet\|_{h,K}$ is defined just before Lemma \ref{l:topological Dolbeault isomorphism-h}.

Let $\tilde{f}_t:=\theta(\psi-t)
\tilde{f}-u_t-s_t$. Then $\bar\partial\tilde{f}_t=0$. Hence
\[\tilde{f}_t\in\mathrm{Ker}\,\bar\partial_q\subset \Gamma(X_1,\mathcal{L}^{n,q}_{(2),h}).\]

As explained just before Lemma \ref{l:topological Dolbeault isomorphism-h/h psi}, the Fr\'{e}chet topology on \[\Gamma(X_1,\mathcal{L}^{n,q}_{(2),h}/\mathcal{L}^{n,q}_{(2),he^{-\psi}})\]
is defined by the semi-norms $\|\bullet\|_{h,K}'$ for any compact set $K$ in open coordinate charts of $X_1$, and the topology of the $q$-th $L^2_{\mathrm{loc}}$ Dolbeault cohomology group
\[H^{n,q}_{(2)}(X_1,L,h/he^{-\psi})\]
is obtained as the quotient topology induced from the semi-norms $\|\bullet\|_{h,K}'$.

Let $\|\bullet\|_{h,K}''$ denote the induced semi-norms on
$H^{n,q}_{(2)}(X_1,L,h/he^{-\psi})$.
Then $\|\bullet\|_{h,K}''$ is smaller than $\|\bullet\|_{h,K}'$ in some sense.

Since \eqref{ie:non reduced estimate-u-and-h no rho} and the construction in Subsection \ref{subsection 1} imply that
\[\theta(\psi-t)
\tilde{f}-u_t-\tilde{f}\in L^2(X_1,\wedge^{n,q}T_X^*\otimes L,he^{-\psi})\]
and
\[\bar\partial(\theta(\psi-t)
\tilde{f}-u_t-\tilde{f})=w_t-\bar\partial\tilde{f}\in L^2(X_1,\wedge^{n,q+1}T_X^*\otimes L,he^{-\psi}),\]
we get
\[\theta(\psi-t)
\tilde{f}-u_t-\tilde{f}\in\Gamma(X_1,\mathcal{L}^{n,q}_{(2),he^{-\psi}}).\]
Then
\[\tilde{f}_t-\tilde{f}=-s_t\quad\mathrm{mod}\quad\Gamma(X_1,\mathcal{L}^{n,q}_{(2),he^{-\psi}}).\]
Hence
\[\|\tilde{f}_t-\tilde{f}\|_{h,K}''\leq \|\tilde{f}_t-\tilde{f}\|_{h,K}'\leq\|s_t\|_{h,K}\rightarrow0\text{ as }t\rightarrow-\infty,\]
where $K$ is any compact subset in open coordinate charts of $X_1$.

Hence Lemma \ref{l:Prill1971} implies that $f$ belongs to the closure of $H^0(S_1,\mathrm{Im}\,P)$ in $H^0\big(S_1,R^q\pi_*(\mathcal{K}\otimes\mathcal{I}(h)/\mathcal{I}(he^{-\psi}))\big)$, where $P$ is the sheaf homomorphism in \eqref{e:def P} and $\mathrm{Im}\,P$ is the image of $P$.

Since
\begin{align*}
H^0(S_1,\mathrm{Im}\,P)=\underset{y\in S_1}\bigcap\big\{F\in\,& H^0\big(S_1,R^q\pi_*(\mathcal{K}\otimes\mathcal{I}(h)/\mathcal{I}(he^{-\psi}))\big);\\
&\text{the germ of }F\text{ at }y\text{ belongs to }(\mathrm{Im}\,P)_y\big\},
\end{align*}
Lemma \ref{l:close property of subsheaf} implies that $H^0(S_1,\mathrm{Im}\,P)$ is a closed subset of \[H^0\big(S_1,R^q\pi_*(\mathcal{K}\otimes\mathcal{I}(h)/\mathcal{I}(he^{-\psi}))\big).\]
Hence $f\in H^0(S_1,\mathrm{Im}\,P)$.

Since $f$ is arbitrary, $P$ is surjective. Thus Theorem \ref{t:Zhou-Zhu-nonreduced-quasipsh} is proved.

\bigskip


\section{Proof of Theorem \ref{t:Zhou-Zhu-optimal-jet-extension}}\label{section-proof-of-theorem-optimal-jet-extension}

We will only give the proof when $\alpha\in(0,+\infty)$. The proof for the case $\alpha=+\infty$ is almost the same.

The proof will be divided into five subsections. Some arguments in the proof will be similar as those in Section \ref{section-proof-of-theorem-Zhou-Zhu-nonreduced-quasipsh}.

\subsection{Construction of a smooth
extension $\tilde{f}$ of $f$ from $Y\cap X$ to $X$.}\label{jet-subsection 1} \quad

Since $f\in H^0\big(X,\mathcal{O}_X(K_X\otimes L)\otimes\mathcal{I}'_{\psi}(h)/\mathcal{I}(he^{-\psi})\big)$, there exists a locally finite covering $\mathcal{U}=\{U_i\}_{i\in I}$ of $X$ by coordinate balls, and a family of holomorphic sections
\[f_i\in\Gamma\big(U_i,\mathcal{O}_X(K_X\otimes L)\otimes\mathcal{I}'_{\psi}(h)\big)\quad(i\in I)\]
such that $f$ is the images of $\{f_i\}_{i\in I}$ under the natural morphisms
\[\Gamma\big(U_i,\mathcal{O}_X(K_X\otimes L)\otimes\mathcal{I}'_{\psi}(h)\big)\longrightarrow\Gamma\big(U_i,\mathcal{O}_X(K_X\otimes L)\otimes\mathcal{I}'_{\psi}(h)/\mathcal{I}(he^{-\psi})\big),\quad i\in I.\]
Hence
\[f_i-f_j\in\Gamma\big(U_i\cap U_j,\mathcal{O}_X(K_X\otimes L)\otimes\mathcal{I}(he^{-\psi})\big),\quad\forall i,j\in I.\]

Let $\{\xi_i\}_{i\in I}$ is a partition of unity subordinate to $\mathcal{U}$, and let
\[\tilde{f}:=\sum\limits_{i\in I}\xi_if_i.\]
Then $\tilde{f}$ is smooth on $X$, and
\[\bar\partial\tilde{f}|_{U_j}=\bar\partial\tilde{f}-\bar\partial f_j=\bar\partial(\sum\limits_{i\in I}\xi_if_i)-\bar\partial(\sum\limits_{i\in I}\xi_i f_j)=\sum\limits_{i\in I}\bar\partial\xi_i\wedge(f_i-f_j),\quad\forall j\in I.\]

Since $X$ is weakly pseudoconvex, there exists a smooth psh exhaustion function $\Psi$ on $X$. Let $X_k:=\{x\in X:\,\Psi(x)<k\}$ ($k\in \mathbb{Z}^+$ and $k\geq3$, we choose $\Psi$ such that $X_3\neq\emptyset$).

Let $h_0$ be any fixed smooth metric of $L$ on $X$. Then
$h=h_0e^{-\phi}$ for some global function $\phi$ on $X$, which is quasi-psh
by the assumption in the theorem. Then we have
\begin{equation}\label{ie:jet f int finite}
\int_{X_k}|\tilde{f}|^2_{\omega,h_0}e^{-\phi}dV_{X,\omega}<+\infty
\end{equation}
and
\begin{equation}\label{ie:jet dbar f int finite}
\int_{X_k}\big|\bar\partial\tilde{f}\big|_{\omega,h_0}^2e^{-\phi-\psi}dV_{X,\omega}<+\infty.
\end{equation}

In the following subsections, $k$ will be fixed until the end of the proof ($k\in \mathbb{Z}^+$ and $k\geq3$).

\subsection{Approximation of singular weights.}\label{jet-subsection 2} \quad

This subsection is almost the same as Subsection \ref{subsection 2}. We need only to replace $X_0$ and $X_1$ in Subsection \ref{subsection 2} by $X_{k+1}$ and $X_k$ respectively, and we obtain two family of upper semicontinuous functions $\{\phi_\rho\}_{\rho\in(0,\rho_1)}$ and $\{\psi_\rho\}_{\rho\in(0,\rho_1)}$ ($\rho_1$ is some positive number) defined on a neighborhood of $\overline{X_k}$ satisfying the same properties $(i)-(v)$ as in Subsection \ref{subsection 2}.

\subsection{Construction of additional weights and twist factors.} \quad

Let $\varrho:\mathbb{R}\rightarrow[0,+\infty)$ be
the function defined by
\begin{displaymath}
\varrho(t)=\begin{cases}
\big(\int_{-1}^1
e^{\frac{1}{t^2-1}}dt\big)^{-1}e^{\frac{1}{t^2-1}} &\quad \mathrm{if}\quad|t|<1\\
0 &\quad \mathrm{if}\quad |t|\geq1.
\end{cases}
\end{displaymath}
Let $\varepsilon\in(0,\frac{1}{4})$ and let $\varrho_\varepsilon:\mathbb{R}\rightarrow[0,+\infty)$ be the function defined by
$\varrho_\varepsilon(t)=\frac{4}{\varepsilon}\varrho(\frac{4}{\varepsilon}t)$.
Then $\varrho_\varepsilon$ is smooth on $\mathbb{R}$ with
support contained in $[-\frac{1}{4}\varepsilon,\frac{1}{4}\varepsilon]$ and
$\int_{-\infty}^{+\infty}\varrho_\varepsilon(t)dt=1$.

Let $\{\sigma_{\varepsilon,t}\}_{\varepsilon\in(0,\frac{1}{4}),t\in(-\infty,-1)}$ be the family of functions on $\mathbb{R}$ defined by (see Section 5 in \cite{Guan-Zhou2013b})
\begin{eqnarray*}
\sigma_{\varepsilon,t}(s)&=&\int_{-\infty}^s\bigg(\int_{-\infty}^{t_2}\frac{1}{1-2\varepsilon}(\mathbb{I}_{(t+\varepsilon,t+1-\varepsilon)}
\ast\varrho_\varepsilon)(t_1)dt_1\bigg)dt_2\\
& &-\int_{-\infty}^0\bigg(\int_{-\infty}^{t_2}\frac{1}{1-2\varepsilon}(\mathbb{I}_{(t+\varepsilon,t+1-\varepsilon)}
\ast\varrho_\varepsilon)(t_1)dt_1\bigg)dt_2,
\end{eqnarray*}
where $\mathbb{I}_{(t+\varepsilon,t+1-\varepsilon)}$ is the characteristic function associated to the interval $(t+\varepsilon,t+1-\varepsilon)$, and the notation $\ast$ denotes the convolution of two functions.

Then we have
\[\sigma_{\varepsilon,t}'(s)=\int_{-\infty}^s\frac{1}{1-2\varepsilon}(\mathbb{I}_{(t+\varepsilon,t+1-\varepsilon)}\ast\varrho_\varepsilon)(t_1)dt_1\]
and
\[\sigma_{\varepsilon,t}''=\frac{1}{1-2\varepsilon}\mathbb{I}_{(t+\varepsilon,t+1-\varepsilon)}\ast\varrho_\varepsilon.\]
Hence for any fixed $\varepsilon\in(0,\frac{1}{4})$ and any fixed $t\in(-\infty,-1)$, $\sigma_{\varepsilon,t}$ is a smooth increasing convex function, $\sigma_{\varepsilon,t}(0)=0$, $0\leq\sigma_{\varepsilon,t}'\leq1$, $0\leq\sigma_{\varepsilon,t}''\leq\frac{1}{1-2\varepsilon}$, $\sigma_{\varepsilon,t}'=0$ on $(-\infty,t+\frac{3}{4}\varepsilon]$, and $\sigma_{\varepsilon,t}'=1$ on $[t+1-\frac{3}{4}\varepsilon,+\infty)$. In particular,
\begin{displaymath}
\sigma_{\varepsilon,t}(s)=\begin{cases}
a_{\varepsilon,t}\geq t &\quad \mathrm{if}\quad s\leq t\\
s &\quad \mathrm{if}\quad s\geq t+1,
\end{cases}
\end{displaymath}
where $a_{\varepsilon,t}$ is a constant depending only on $\varepsilon$ and $t$.

Since $\sup\limits_{\Omega}\psi<\alpha_0$ for any $\Omega\subset\subset X$ by assumption, we have
\[\alpha_k:=\sup_{\overline{X_{k+1}}}\psi<\alpha_0.\]
Then it follows from the property $(iii)$ in Subsection \ref{subsection 2} that
\[\sup_{\overline{X_k}}\psi_\rho\leq\alpha_k,\quad\forall\rho\in(0,\rho_1).\]

Let $t_0:=\min\{\sup\limits_{\overline{X_3}}\psi-1,-1\}$. Then for any $\varepsilon\in(0,\frac{1}{4})$, any $t\in(-\infty,t_0)$ and any $\rho\in(0,\rho_1)$, we have
\begin{equation}\label{ie:sigma-psi}
t\leq\sigma_{\varepsilon,t}(\psi_\rho)\leq\sigma_{\varepsilon,t}(\alpha_k)=\alpha_k\quad\text{on}\quad\overline{X_k}.
\end{equation}

Let $\zeta$ and $\chi$ be the solution to the following system of ODEs
defined on $(-\infty,\alpha_0)$:
\begin{numcases}{}
\chi(t)\zeta'(t)-\chi'(t)=1,\label{jet ode1}\\
\bigg(\chi(t)+\frac{\big(\chi'(t)\big)^2}{\chi(t)
\zeta''(t)-\chi''(t)}\bigg)e^{\zeta(t)}=\bigg(\frac{1}{\alpha R(\alpha_0)}+C_R\bigg)R(t),\label{jet ode2}
\end{numcases}
where we assume that $\zeta$ and $\chi$ are both
smooth on $(-\infty,\alpha_0)$, and that $\inf\limits_{t<\alpha_0}\zeta(t)=0$, $\inf\limits_{t<\alpha_0}\chi(t)\geq\frac{1}{\alpha}$,
$\zeta'>0$ and $\chi'<0$ on
$(-\infty,\alpha_0)$. By the similar calculation as in \cite{Guan-Zhou2013b} or \cite{Zhou-Zhu2015}, we can solve the system of ODEs and get the solution
\begin{numcases}{}
\zeta(t)=\log\bigg(\frac{1}{\alpha R(\alpha_0)}+C_R\bigg)-\log\bigg(\frac{1}{\alpha R(\alpha_0)}+\int_t^{\alpha_0}\frac{dt_1}{R(t_1)}\bigg),\nonumber\\
\chi(t)=\frac{\int_t^{\alpha_0}\big(\frac{1}{\alpha R(\alpha_0)}+\int_{t_2}^{\alpha_0}\frac{dt_1}{R(t_1)}\big)dt_2+\frac{1}{\alpha^2R(\alpha_0)}}
{\frac{1}{\alpha R(\alpha_0)}+\int_t^{\alpha_0}\frac{dt_1}{R(t_1)}}.\nonumber
\end{numcases}

Let $h_{\varepsilon,t,\rho}$ be the new metric on the line bundle $L$ over
$X_k\setminus\Sigma_\rho$ defined by
\[h_{\varepsilon,t,\rho}:=
h_0e^{-\phi_\rho-\psi_\rho-\zeta(\sigma_{\varepsilon,t}(\psi_\rho))}.\]

Let $\tau_{\varepsilon,t,\rho}:=\chi(\sigma_{\varepsilon,t}(\psi_\rho))$ and
$A_{\varepsilon,t,\rho}:=\frac{(\chi'(\sigma_{\varepsilon,t}(\psi_\rho)))^2}{\chi(\sigma_{\varepsilon,t}(\psi_\rho))
\zeta''(\sigma_{\varepsilon,t}(\psi_\rho))-\chi''(\sigma_{\varepsilon,t}(\psi_\rho))}$. Set
$\mathrm{B}_{\varepsilon,t,\rho} =[\Theta_{\varepsilon,t,\rho},\,\Lambda
]$ on $X_k\setminus\Sigma_\rho$, where
\[\Theta_{\varepsilon,t,\rho}:=\tau_{\varepsilon,t,\rho}\sqrt{-1}
\Theta_{L,h_{\varepsilon,t,\rho}}
-\sqrt{-1}\partial\bar\partial\tau_{\varepsilon,t,\rho}
-\sqrt{-1}\frac{\partial\tau_{\varepsilon,t,\rho}\wedge
\bar\partial\tau_{\varepsilon,t,\rho}}{A_{\varepsilon,t,\rho}}.\]
We want to prove
\begin{equation}\label{ie:jet twisted cuvature}
\Theta_{\varepsilon,t,\rho}\big|_{X_k\setminus\Sigma_\rho}\geq
\sigma_{\varepsilon,t}''(\psi_\rho)\sqrt{-1}\partial\psi_\rho\wedge
\bar\partial\psi_\rho-2\chi(\sigma_{\varepsilon,t}(\psi_\rho))\delta_\rho\omega.
\end{equation}

It follows from \eqref{jet ode1} that
\begin{eqnarray*}
& &\Theta_{\varepsilon,t,\rho}\big|_{X_k\setminus\Sigma_\rho}\\
&=&\chi(\sigma_{\varepsilon,t}(\psi_\rho))\big(\sqrt{-1}\Theta_{L,h_0}
+\sqrt{-1}\partial\bar\partial
\phi_\rho+\sqrt{-1}\partial\bar\partial\psi_\rho\big)\\
& &+\big(\chi(\sigma_{\varepsilon,t}(\psi_\rho))
\zeta'(\sigma_{\varepsilon,t}(\psi_\rho))
-\chi'(\sigma_{\varepsilon,t}(\psi_\rho))\big)
\sqrt{-1}\partial\bar\partial\sigma_{\varepsilon,t}(\psi_\rho)\\
&=&\chi(\sigma_{\varepsilon,t}(\psi_\rho))\big(\sqrt{-1}\Theta_{L,h_0}
+\sqrt{-1}\partial\bar\partial
\phi_\rho+\sqrt{-1}\partial\bar\partial\psi_\rho\big)+\sqrt{-1}\partial\bar\partial
(\sigma_{\varepsilon,t}(\psi_\rho))\\
&=&\chi(\sigma_{\varepsilon,t}(\psi_\rho))\big(\sqrt{-1}\Theta_{L,h_0}
+\sqrt{-1}\partial\bar\partial
\phi_\rho+\sqrt{-1}\partial\bar\partial\psi_\rho\big)\\
& &+\sigma_{\varepsilon,t}'(\psi_\rho)\sqrt{-1}\partial\bar\partial\psi_\rho
+\sigma_{\varepsilon,t}''(\psi_\rho)\sqrt{-1}\partial\psi_\rho\wedge
\bar\partial\psi_\rho.
\end{eqnarray*}
Since $0\leq\sigma_{\varepsilon,t}'\leq1$ on $\mathbb{R}$ and $\chi\geq\inf\limits_{t<\alpha_0}\chi(t)\geq\frac{1}{\alpha}$ on $(-\infty,\alpha_0)$, it follows from the properties $(iv)$ and $(v)$ in Subsection \ref{subsection 2} that
\begin{eqnarray*}
& &\chi(\sigma_{\varepsilon,t}(\psi_\rho))\big(\sqrt{-1}\Theta_{L,h_0}
+\sqrt{-1}\partial\bar\partial
\phi_\rho+\sqrt{-1}\partial\bar\partial\psi_\rho\big)
+\sigma_{\varepsilon,t}'(\psi_\rho)\sqrt{-1}\partial\bar\partial\psi_\rho\\
&=&\chi(\sigma_{\varepsilon,t}(\psi_\rho))\big(\sqrt{-1}\Theta_{L,h_0}
+\sqrt{-1}\partial\bar\partial
\phi_\rho+\sqrt{-1}\partial\bar\partial\psi_\rho+2\delta_\rho\omega\big)
-2\chi(\sigma_{\varepsilon,t}(\psi_\rho))\delta_\rho\omega\\
& &+\frac{\sigma_{\varepsilon,t}'(\psi_\rho)}{\alpha}\cdot\alpha\sqrt{-1}\partial\bar\partial\psi_\rho\\
&\geq&\frac{\sigma_{\varepsilon,t}'(\psi_\rho)}{\alpha}
\bigg(\sqrt{-1}\Theta_{L,h_0}
+\sqrt{-1}\partial\bar\partial
\phi_\rho+\sqrt{-1}\partial\bar\partial\psi_\rho+2\delta_\rho\omega+\alpha\sqrt{-1}\partial\bar\partial\psi_\rho\bigg)\\
& &-2\chi(\sigma_{\varepsilon,t}(\psi_\rho))\delta_\rho\omega\\
&\geq&-2\chi(\sigma_{\varepsilon,t}(\psi_\rho))\delta_\rho\omega
\end{eqnarray*}
on $X_k\setminus\Sigma_\rho$. Hence we get \eqref{ie:jet twisted cuvature} as desired.

Let $\delta(t)$ be a positive increasing function defined on $(-\infty,t_0)$, such that
\[\lim\limits_{t\rightarrow-\infty}\delta(t)=0.\]
The explicit expression of $\delta(t)$ will be determined later (cf. \eqref{e:def delta t}).

We choose an increasing family of
positive numbers
$\{\rho_t\}_{t\in(-\infty,t_0)}$ such that $\rho_t<\rho_1$ for any $t$,
$\lim\limits_{t\rightarrow-\infty}\rho_t=0$, and
\begin{equation}\label{ie:jet delta1}
2\chi(t)\delta_{\rho_t}<\delta(t)\text{ for any }t.
\end{equation}

Since $\chi$ is decreasing, we have
$\chi(\sigma_{\varepsilon,t}(\psi_\rho))\leq \chi(t)$
on $X_k$ by \eqref{ie:sigma-psi}. Then it follows from \eqref{ie:jet twisted cuvature} and
\eqref{ie:jet delta1} that
\[\Theta_{\varepsilon,t,\rho}
\big|_{X_k\setminus\Sigma_\rho}\geq
\sigma_{\varepsilon,t}''(\psi_\rho)\sqrt{-1}\partial\psi_\rho\wedge
\bar\partial\psi_\rho-\delta(t)\omega,\quad\forall\,\rho\in(0,\rho_t].\]
Hence
\begin{equation}\label{ie:jet B-estimate}
\mathrm{B}_{\varepsilon,t,\rho}
+\delta(t)\mathrm{I}
\geq[\sigma_{\varepsilon,t}''(\psi_\rho)\sqrt{-1}\partial\psi_\rho\wedge
\bar\partial\psi_\rho,\,\Lambda]=\sigma_{\varepsilon,t}''(\psi_\rho)\mathrm{T}_{\bar\partial\psi_\rho}
\mathrm{T}_{\bar\partial\psi_\rho}^*\geq0
\end{equation}
holds on $X_k\setminus\Sigma_\rho$ for any $\rho\in(0,\rho_t]$ as an operator on
$(n,q+1)$-forms, where $\mathrm{T}_{\bar\partial\psi_\rho}$ denotes the
operator $\bar\partial\psi_\rho\wedge\bullet$ and
$\mathrm{T}_{\bar\partial\psi_\rho}^*$ is its Hilbert adjoint
operator.

\subsection{Construction of suitably truncated forms and solving
$\bar\partial$ globally with $L^2$ estimates.}\label{jet-subsection 4} \quad

Define $g_{\varepsilon,t,\rho}=\bar\partial
\big((1-\sigma_{\varepsilon,t}'(\psi_\rho))
\tilde{f}\big)$ on $X_k\setminus\Sigma_\rho$, where $\tilde{f}$ is as in Subsection \ref{jet-subsection 1}. Then $\bar\partial g_{\varepsilon,t,\rho}=0$ on $X_k\setminus\Sigma_\rho$ and
\[g_{\varepsilon,t,\rho}=g_{1,\varepsilon,t,\rho}+g_{2,\varepsilon,t,\rho},\]
where $g_{1,\varepsilon,t,\rho}:=-\sigma_{\varepsilon,t}''(\psi_\rho)\bar\partial\psi_\rho\wedge\tilde{f}$ and
$g_{2,\varepsilon,t,\rho}:=(1-\sigma_{\varepsilon,t}'(\psi_\rho))\bar\partial \tilde{f}$.

The Cauchy-Schwarz inequality and \eqref{ie:jet B-estimate} imply that, on $X_k\setminus\Sigma_\rho$,
\begin{eqnarray}
& &\langle(\mathrm{B}_{\varepsilon,t,\rho}+2\delta(t) \mathrm{I})^{-1}
g_{\varepsilon,t,\rho},g_{\varepsilon,t,\rho}
\rangle_{\omega,h_{\varepsilon,t,\rho}}\label{ie:jet estimate with delta}\\
&\leq&(1+\varepsilon)\langle(\mathrm{B}_{\varepsilon,t,\rho}+2\delta(t)
\mathrm{I})^{-1} g_{1,\varepsilon,t,\rho},g_{1,\varepsilon,t,\rho}
\rangle_{\omega,h_{\varepsilon,t,\rho}}\nonumber\\
& &+(1+\frac{1}{\varepsilon})\langle(\mathrm{B}_{\varepsilon,t,\rho}+2\delta(t)
\mathrm{I})^{-1} g_{2,\varepsilon,t,\rho},g_{2,\varepsilon,t,\rho}
\rangle_{\omega,h_{\varepsilon,t,\rho}}\nonumber\\
&\leq&(1+\varepsilon)\langle(\mathrm{B}_{\varepsilon,t,\rho}+\delta(t)
\mathrm{I})^{-1} g_{1,\varepsilon,t,\rho},g_{1,\varepsilon,t,\rho}
\rangle_{\omega,h_{\varepsilon,t,\rho}}\nonumber\\
& &+(1+\frac{1}{\varepsilon})\langle\frac{1}{\delta(t)}
g_{2,\varepsilon,t,\rho},g_{2,\varepsilon,t,\rho}
\rangle_{\omega,h_{\varepsilon,t,\rho}}\nonumber
\end{eqnarray}
for any $\rho\in(0,\rho_t]$.

Since $\zeta\geq0$ and $\phi_\rho\geq\phi$ on $X_k$, we obtain from \eqref{ie:jet B-estimate} that
\begin{eqnarray*}
& &\int_{X_k\setminus\Sigma_\rho}\langle
(\mathrm{B}_{\varepsilon,t,\rho}+\delta(t) \mathrm{I})^{-1}
g_{1,\varepsilon,t,\rho},g_{1,\varepsilon,t,\rho}
\rangle_{\omega,h_{\varepsilon,t,\rho}}dV_{X,\omega}\\
&\leq&\int_{X_k}
\sigma_{\varepsilon,t}''(\psi_\rho)|\tilde{f}|^2_{\omega,h_0}
e^{-\phi_\rho-\psi_\rho}dV_{X,\omega}\\
&\leq&\frac{1}{1-2\varepsilon}\int_{X_k}
|\tilde{f}|^2_{\omega,h_0}
e^{-\phi-\psi_\rho}\mathbb{I}_{\{t+\frac{3}{4}\varepsilon<\psi_\rho<t+1-\frac{3}{4}\varepsilon\}}dV_{X,\omega}
\end{eqnarray*}
for any $\rho\in(0,\rho_t]$, where $\mathbb{I}_{\{t+\frac{3}{4}\varepsilon<\psi_\rho<t+1-\frac{3}{4}\varepsilon\}}$ is the characteristic function associated to the set $\{x\in X_k:\,t+\frac{3}{4}\varepsilon<\psi_\rho(x)<t+1-\frac{3}{4}\varepsilon\}$.

Since $\zeta\geq0$ and
$\phi_\rho+\psi_\rho\geq\phi+\psi$ on
$X_k\setminus\Sigma$, we get
\begin{eqnarray*}
& &\int_{X_k\setminus\Sigma_\rho}\langle\frac{1}
{\delta(t)}g_{2,\varepsilon,t,\rho},g_{2,\varepsilon,t,\rho}
\rangle_{\omega,h_{\varepsilon,t,\rho}}dV_{X,\omega}\\
&\leq&\frac{1}{\delta(t)}\int_{X_k}
|\bar\partial \tilde{f}|^2_{\omega,h_0}e^{-\phi-\psi}\mathbb{I}_{\{\psi_\rho<t+1-\frac{3}{4}\varepsilon\}}dV_{X,\omega}.
\end{eqnarray*}

Therefore, it follows from \eqref{ie:jet estimate with delta} that
\begin{eqnarray*}
C_\rho(t)&:=&\int_{X_k\setminus\Sigma_\rho}\langle(\mathrm{B}_{\varepsilon,t,\rho}
+2\delta(t) \mathrm{I})^{-1}
g_{\varepsilon,t,\rho},g_{\varepsilon,t,\rho}
\rangle_{\omega,h_{\varepsilon,t,\rho}}dV_{X,\omega}\\
&\leq&\frac{1+\varepsilon}{1-2\varepsilon}\int_{X_k}
|\tilde{f}|^2_{\omega,h_0}
e^{-\phi-\psi_\rho}\mathbb{I}_{\{t+\frac{3}{4}\varepsilon<\psi_\rho<t+1-\frac{3}{4}\varepsilon\}}dV_{X,\omega}\\
& &+(1+\frac{1}{\varepsilon})\frac{1}{\delta(t)}\int_{X_k}
|\bar\partial \tilde{f}|^2_{\omega,h_0}e^{-\phi-\psi}\mathbb{I}_{\{\psi_\rho<t+1-\frac{3}{4}\varepsilon\}}dV_{X,\omega}
\end{eqnarray*}
for any $\rho\in(0,\rho_t]$.

Since $\lim\limits_{\rho\rightarrow0}\psi_\rho=\psi$ on $X_k\setminus\Sigma$, it follows from \eqref{ie:jet f int finite}, \eqref{ie:jet dbar f int finite} and Fatou's lemma that
\begin{eqnarray}
C(t)&:=&\varlimsup\limits_{\rho\rightarrow0}C_\rho(t)\label{ie:jet estimate-C-t}\\
&\leq&\frac{1+\varepsilon}{1-2\varepsilon}\int_{X_k}
|\tilde{f}|^2_{\omega,h_0}
e^{-\phi-\psi}\mathbb{I}_{\{t+\frac{3}{4}\varepsilon\leq\psi\leq t+1-\frac{3}{4}\varepsilon\}}dV_{X,\omega}\nonumber\\
& &+(1+\frac{1}{\varepsilon})\frac{1}{\delta(t)}\int_{X_k}
|\bar\partial\tilde{f}|^2_{\omega,h_0}e^{-\phi-\psi}\mathbb{I}_{\{\psi\leq t+1-\frac{3}{4}\varepsilon\}}dV_{X,\omega}.\nonumber
\end{eqnarray}

Let
\begin{equation}\label{e:def delta t}
\delta(t):=\bigg(\int_{X_k}|\bar\partial\tilde{f}|^2_{\omega,h_0}e^{-\phi-\psi}\mathbb{I}_{\{\psi<t+1\}}dV_{X,\omega}\bigg)^{\frac{1}{2}}.
\end{equation}
Then
\[C(t)\leq\frac{1+\varepsilon}{1-2\varepsilon}\int_{X_k}
|\tilde{f}|^2_{\omega,h_0}
e^{-\phi-\psi}\mathbb{I}_{\{t<\psi<t+1\}}dV_{X,\omega}+(1+\frac{1}{\varepsilon})\delta(t).\]
Hence
\begin{equation}\label{e:jet limit-C-t}
\varlimsup\limits_{t\rightarrow-\infty}C(t)\leq\frac{1+\varepsilon}{1-2\varepsilon}\int_Y|f|^2_{\omega,h} dV_{X,\omega}[\psi].
\end{equation}

Since $X_k$ is a weakly pseudoconvex K\"{a}hler manifold, $X_k$ carries a complete K\"{a}hler metric. Hence $X_k\setminus\Sigma_\rho$ carries a complete K\"{a}hler metric by Lemma \ref{l:Demailly1982-complete-metric}. Then by Lemma \ref{l:Demailly-non complete metric}, there exists an $L$-valued $(n,0)$-form $u_{k,\varepsilon,t,\rho}$ and an $L$-valued $(n,1)$-form $v_{k,\varepsilon,t,\rho}$ such that
\begin{equation}\label{e:jet dbar}
\bar\partial u_{k,\varepsilon,t,\rho}
+\sqrt{2\delta(t)}v_{k,\varepsilon,t,\rho} =g_{\varepsilon,t,\rho}\quad\text{on}\quad X_k\setminus\Sigma_\rho
\end{equation}
and
\begin{eqnarray*}
& &\int_{X_k\setminus\Sigma_\rho}\frac{|u_{k,\varepsilon,t,\rho}|^2
_{\omega,h_0}e^{-\phi_\rho-\psi_\rho-\zeta(\sigma_{\varepsilon,t}(\psi_\rho))}}
{\tau_{\varepsilon,t,\rho}+A_{\varepsilon,t,\rho}}dV_{X,\omega}\\
& &+\int_{X_k\setminus\Sigma_\rho}|v_{k,\varepsilon,t,\rho}|^2_{\omega,h_0}
e^{-\phi_\rho-
\psi_\rho-\zeta(\sigma_{\varepsilon,t}(\psi_\rho))}dV_{X,\omega}\\
&\leq& C_\rho(t)
\end{eqnarray*}
for any $\rho\in(0,\rho_t]$.

Since \eqref{jet ode2} implies that
\[\frac{e^{-\zeta(\sigma_{\varepsilon,t}(\psi_\rho))}}{\tau_{\varepsilon,t,\rho}+A_{\varepsilon,t,\rho}}=\frac{1}{\big(\frac{1}{\alpha R(\alpha_0)}+C_R\big)R(\sigma_{\varepsilon,t}(\psi_\rho))}\]
and \eqref{ie:sigma-psi} implies that
\[e^{-\zeta(\sigma_{\varepsilon,t}(\psi_\rho))}\geq e^{-\zeta(\alpha_k)},\]
we get
\begin{equation}\label{ie:jet estimate-u-and-h}
\int_{X_k}\frac{|u_{k,\varepsilon,t,\rho}|^2
_{\omega,h_0}e^{-\phi_\rho-\psi_\rho}dV_{X,\omega}}{\big(\frac{1}{\alpha R(\alpha_0)}+C_R\big)R(\sigma_{\varepsilon,t}(\psi_\rho))}+e^{-\zeta(\alpha_k)}\int_{X_k}|v_{k,\varepsilon,t,\rho}|^2_{\omega,h_0}
e^{-\phi_\rho-
\psi_\rho}dV_{X,\omega}
\leq C_\rho(t)
\end{equation}
for any $\rho\in(0,\rho_t]$.

Since $R$ is decreasing near $-\infty$, \eqref{ie:sigma-psi} implies that $R(\sigma_{\varepsilon,t}(\psi_\rho))\leq R(t)$ for any $\varepsilon\in(0,\frac{1}{4})$, any $t\in(-\infty,t_0)$ and any $\rho\in(0,\rho_t)$. Then \eqref{ie:jet estimate-u-and-h} and the property $\sup\limits_{\overline{X_k}\setminus\Sigma}(\phi_\rho+\psi_\rho)<+\infty$ imply that
$u_{k,\varepsilon,t,\rho}\in
L^2$ and $v_{k,\varepsilon,t,\rho}\in
L^2$.

The property $\bar\partial\psi_\rho\in L^1$ on $X_k$ implies that $g_{\varepsilon,t,\rho}=g_{1,\varepsilon,t,\rho}+g_{2,\varepsilon,t,\rho}\in L^1$ on $X_k$. Then it follows from
\eqref{e:jet dbar} and Lemma \ref{l:extension} that
\begin{equation}\label{e:jet dbar on X_k}
\bar\partial u_{k,\varepsilon,t,\rho}
+\sqrt{2\delta(t)}v_{k,\varepsilon,t,\rho} =g_{\varepsilon,t,\rho}=\bar\partial
\big((1-\sigma_{\varepsilon,t}'(\psi_\rho))
\tilde{f}\big)\quad\text{on}\quad X_k.
\end{equation}

Since $\lim\limits_{\rho\rightarrow0}\psi_\rho=\psi$ on $X_k\setminus\Sigma$, $(1-\sigma_{\varepsilon,t}'(\psi_\rho))
\tilde{f}$ converges to $(1-\sigma_{\varepsilon,t}'(\psi))
\tilde{f}$ in $L^2$ as $\rho\rightarrow0$ by Lebesgue's dominated convergence theorem. Hence $\bar\partial
\big((1-\sigma_{\varepsilon,t}'(\psi_\rho))
\tilde{f}\big)$ converges to $\bar\partial
\big((1-\sigma_{\varepsilon,t}'(\psi))
\tilde{f}\big)$ in the sense of currents as $\rho\rightarrow0$.

Let $t$ be fixed and let $\{\rho_j\}_{j=2}^{+\infty}$ be a decreasing sequence of positive numbers such that $\rho_2<\rho_t$ and $\lim\limits_{j\rightarrow+\infty}\rho_j=0$.

Since $\lim\limits_{j\rightarrow+\infty}\psi_{\rho_j}=\psi$ on $\overline{X_k}\setminus\Sigma$ and $\phi_{\rho_j}+\psi_{\rho_j}$ decreases to $\phi+\psi$ on $\overline{X_k}\setminus\Sigma$ as $j\rightarrow+\infty$,
\[\frac{e^{-\phi_{\rho_j}-\psi_{\rho_j}}}{\sup\limits_{p\geq j}R(\sigma_{\varepsilon,t}(\psi_{\rho_p}))}\]
increases to $\frac{e^{-\phi-\psi}}{R(\sigma_{\varepsilon,t}(\psi))}$ on $\overline{X_k}\setminus\Sigma$ as $j\rightarrow+\infty$.

By extracting weak limits of $\{u_{k,\varepsilon,t,\rho_j}\}_{j=2}^{+\infty}$ as $j\rightarrow+\infty$, it follows from \eqref{ie:jet estimate-u-and-h} and the diagonal argument that there exists an $L^2$ $L$-valued $(n,0)$-form $u_{k,\varepsilon,t}$ such that a subsequence $\{u_{k,\varepsilon,t,\rho_{j_r}}\}_{r=1}^{+\infty}$ of $\{u_{k,\varepsilon,t,\rho_j}\}_{j=2}^{+\infty}$ converges to $u_{k,\varepsilon,t}$ weakly in \[L^2_{(n,0)}\bigg(X_k,\,\frac{e^{-\phi_{\rho_i}-\psi_{\rho_i}}dV_{X,\omega}}{\sup\limits_{p\geq i}R(\sigma_{\varepsilon,t}(\psi_{\rho_p}))}\bigg)\]
for any positive integer $i$ as $r\rightarrow+\infty$.

Hence the Banach-Steinhaus Theorem implies that, for any positive integer $i$,
\begin{eqnarray*}
\int_{X_k}\frac{|u_{k,\varepsilon,t}|^2_{\omega,h_0}e^{-\phi_{\rho_i}-\psi_{\rho_i}}dV_{X,\omega}}{\sup\limits_{p\geq i}R(\sigma_{\varepsilon,t}(\psi_{\rho_p}))}&\leq&
\varliminf_{r\rightarrow+\infty}\int_{X_k}\frac{|u_{k,\varepsilon,t,\rho_{j_r}}|^2_{\omega,h_0}e^{-\phi_{\rho_i}-\psi_{\rho_i}}
dV_{X,\omega}}{\sup\limits_{p\geq i}R(\sigma_{\varepsilon,t}(\psi_{\rho_p}))}\\
&\leq&\varliminf_{r\rightarrow+\infty}\int_{X_k}\frac{|u_{k,\varepsilon,t,\rho_{j_r}}|^2_{\omega,h_0}e^{-\phi_{\rho_{j_r}}-\psi_{\rho_{j_r}}}
dV_{X,\omega}}{\sup\limits_{p\geq j_r}R(\sigma_{\varepsilon,t}(\psi_{\rho_p}))}.
\end{eqnarray*}
Then Fatou's lemma implies that
\begin{eqnarray*}
\int_{X_k}\frac{|u_{k,\varepsilon,t}|^2_{\omega,h_0}e^{-\phi-\psi}dV_{X,\omega}}{R(\sigma_{\varepsilon,t}(\psi))}&\leq&
\varliminf_{i\rightarrow+\infty}\int_{X_k}\frac{|u_{k,\varepsilon,t}|^2_{\omega,h_0}e^{-\phi_{\rho_i}-\psi_{\rho_i}}dV_{X,\omega}}{\sup\limits_{p\geq i}R(\sigma_{\varepsilon,t}(\psi_{\rho_p}))}\\
&\leq&\varliminf_{r\rightarrow+\infty}\int_{X_k}\frac{|u_{k,\varepsilon,t,\rho_{j_r}}|^2_{\omega,h_0}e^{-\phi_{\rho_{j_r}}-\psi_{\rho_{j_r}}}
dV_{X,\omega}}{\sup\limits_{p\geq j_r}R(\sigma_{\varepsilon,t}(\psi_{\rho_p}))}.
\end{eqnarray*}

Similar weak limit argument can also be applied to subsequences of $\{v_{k,\varepsilon,t,\rho_j}\}_{j=2}^{+\infty}$.

In conclusion, by \eqref{e:jet dbar on X_k}, \eqref{ie:jet estimate-u-and-h} and \eqref{ie:jet estimate-C-t}, there exist $L^2$ forms $u_{k,\varepsilon,t}$ and $v_{k,\varepsilon,t}$ such that
\begin{equation}\label{e:jet dbar on X_k limit-1}
\bar\partial u_{k,\varepsilon,t}
+\sqrt{2\delta(t)}v_{k,\varepsilon,t}=\bar\partial
\big((1-\sigma_{\varepsilon,t}'(\psi))
\tilde{f}\big)\quad\text{on}\quad X_k
\end{equation}
and
\begin{equation}\label{ie:jet estimate-u-and-h no rho}
\int_{X_k}\frac{|u_{k,\varepsilon,t}|^2
_{\omega,h_0}e^{-\phi-\psi}dV_{X,\omega}}{\big(\frac{1}{\alpha R(\alpha_0)}+C_R\big)R(\sigma_{\varepsilon,t}(\psi))}+e^{-\zeta(\alpha_k)}\int_{X_k}|v_{k,\varepsilon,t}|^2_{\omega,h_0}
e^{-\phi-
\psi}dV_{X,\omega}
\leq C(t).
\end{equation}

\eqref{e:jet limit-C-t} and \eqref{ie:jet estimate-u-and-h no rho} imply that
\begin{equation}\label{e:jet error term limit-1}
\varlimsup\limits_{t\rightarrow-\infty}\int_{X_k}|\sqrt{2\delta(t)}v_{k,\varepsilon,t}|^2_{\omega,h_0}e^{-\phi-\psi}dV_{X,\omega}
\leq\varlimsup\limits_{t\rightarrow-\infty}2e^{\zeta(\alpha_k)}\delta(t)C(t)=0.
\end{equation}

Define $F_{k,\varepsilon,t}=-u_{k,\varepsilon,t}+(1-\sigma_{\varepsilon,t}'(\psi))
\tilde{f}$ on $X_k$. Then \eqref{e:jet dbar on X_k limit-1} implies that
\begin{equation}\label{e:jet dbar F}
\bar\partial F_{k,\varepsilon,t}=\sqrt{2\delta(t)}v_{k,\varepsilon,t}\quad\text{on}\quad X_k.
\end{equation}

Since $R$ is decreasing near $-\infty$, $R(\sigma_{\varepsilon,t}(s))\leq R(s)$ for all $s\in(-\infty,\alpha_0)$ when $t$ is small enough. Then \eqref{ie:jet estimate-u-and-h no rho} implies that
\begin{eqnarray}
& &\int_{X_k}\frac{|F_{k,\varepsilon,t}|^2_{\omega,h_0}e^{-\phi}}{e^\psi R(\psi)}dV_{X,\omega}\label{ie:jet F estimate-1}\\
&\leq&(1+\varepsilon)\int_{X_k}\frac{|u_{k,\varepsilon,t}|^2_{\omega,h_0}e^{-\phi}}{e^\psi R(\sigma_{\varepsilon,t}(\psi))}dV_{X,\omega}
+\frac{1+\varepsilon}{\varepsilon}\int_{X_k}\frac{|(1-\sigma_{\varepsilon,t}'(\psi))\tilde{f}|^2_{\omega,h_0}e^{-\phi}}{e^\psi R(\psi)}dV_{X,\omega}\nonumber\\
&\leq&(1+\varepsilon)\bigg(\frac{1}{\alpha R(\alpha_0)}+C_R\bigg)C(t)+\widetilde{C}(t)\nonumber
\end{eqnarray}
when $t$ is small enough, where
\[\widetilde{C}(t):=\frac{1+\varepsilon}{\varepsilon}\int_{X_k}\frac{|\tilde{f}|^2_{\omega,h_0}e^{-\phi}\mathbb{I}_{\{\psi<t+1\}}}{e^\psi R(\psi)}dV_{X,\omega}.\]

Since \eqref{ie:jet-extension-thm-f-finite} implies that
\[\varlimsup_{t\rightarrow-\infty}\int_{X_k}|\tilde{f}|^2_{\omega,h_0}e^{-\phi-\psi}\mathbb{I}_{\{t<\psi<t+1\}}dV_{X,\omega}<+\infty,\]
there exists a positive number $C_1$ such that
\[\int_{X_k}|\tilde{f}|^2_{\omega,h_0}e^{-\phi-\psi}\mathbb{I}_{\{t-j<\psi<t+1-j\}}dV_{X,\omega}\leq C_1\]
for all nonnegative integer $j$ when $t$ is small enough.

Since $R$ is decreasing near $-\infty$, we get
\begin{eqnarray*}
\widetilde{C}(t)&\leq&\frac{1+\varepsilon}{\varepsilon}\sum_{j=0}^{+\infty}\frac{1}{R(t+1-j)}
\int_{X_k}|\tilde{f}|^2_{\omega,h_0}e^{-\phi-\psi}\mathbb{I}_{\{t-j<\psi<t+1-j\}}dV_{X,\omega}\\
&\leq&\frac{1+\varepsilon}{\varepsilon}C_1\int_{-\infty}^{t+2}\frac{1}{R(s)}ds
\end{eqnarray*}
when $t$ is small enough. Hence
\begin{equation}\label{e:jet-lim-sencond C-t}
\lim_{t\rightarrow-\infty}\widetilde{C}(t)=0.
\end{equation}

Since $\varlimsup\limits_{s\rightarrow-\infty} e^sR(s)<+\infty$, we obtain from \eqref{ie:jet F estimate-1}, \eqref{e:jet limit-C-t} and \eqref{e:jet-lim-sencond C-t} that
\begin{equation}\label{ie:jet F estimate-2}
\int_{X_k}|F_{k,\varepsilon,t}|^2_{\omega,h_0}dV_{X,\omega}\leq C_2
\end{equation}
for some positive number $C_2$ independent of $t$ when $t$ is small enough.

By extracting weak limits of $\{F_{k,\varepsilon,t}\}_{t\in(-\infty,t_0)}$ as $t\rightarrow-\infty$, it follows from \eqref{ie:jet F estimate-1}, \eqref{ie:jet F estimate-2}, \eqref{e:jet limit-C-t} and \eqref{e:jet-lim-sencond C-t} that there exists a sequence of negative numbers $\{t_j\}_{j=1}^{+\infty}$ and an $L$-valued $(n,0)$-form $F_{k,\varepsilon}$ such that $\lim\limits_{j\rightarrow+\infty}t_j=-\infty$, $F_{k,\varepsilon,t_j}\rightharpoonup F_{k,\varepsilon}$ weakly in both $L^2_{(n,0)}(X_k,\frac{e^{-\phi}dV_{X,\omega}}{e^\psi R(\psi)})$ and $L^2_{(n,0)}(X_k,dV_{X,\omega})$, and
\begin{equation}\label{ie:jet F estimate-3}
\int_{X_k}\frac{|F_{k,\varepsilon}|^2_{\omega,h_0}e^{-\phi}}{e^\psi R(\psi)}dV_{X,\omega}\leq\frac{(1+\varepsilon)^2}{1-2\varepsilon}\bigg(\frac{1}{\alpha R(\alpha_0)}+C_R\bigg)\int_Y|f|^2_{\omega,h} dV_{X,\omega}[\psi].
\end{equation}

It follows from \eqref{e:jet error term limit-1} and \eqref{e:jet dbar F} that $\bar\partial F_{k,\varepsilon}=0$ on $X_k$. Thus
\[F_{k,\varepsilon}\in H^0\big(X_k,\mathcal{O}_X(K_X\otimes L)\big).\]

In Subsection \ref{jet-subsection 5}, we will prove that $F_{k,\varepsilon}\in H^0\big(X_k,\mathcal{O}_X(K_X\otimes L)\otimes\mathcal{I}'_{\psi}(h)\big)$ and that $F_{k,\varepsilon}$ maps to $f$ under the morphism $\mathcal{I}'_{\psi}(h)\longrightarrow\mathcal{I}'_{\psi}(h)/\mathcal{I}(he^{-\psi})$.

\subsection{Solving $\bar\partial$ locally with $L^2$ estimates and the end of the proof.}\label{jet-subsection 5} \quad

Let $\{U_i\}_{i\in I}$ be the covering of $X$ as in Subsection \ref{jet-subsection 1}. Let $i\in I$ be any fixed index. Let $V$ be an arbitrary relatively compact coordinate ball contained in $U_i\cap X_k$ such that $L$ is trivial on $\overline{V}$. Then $f_i$, $\tilde{f}$, $u_{k,\varepsilon,t}$, $v_{k,\varepsilon,t}$, $F_{k,\varepsilon,t}$ and $F_{k,\varepsilon}$ can be regarded as forms with values in $\mathbb{C}$ when they are restricted to $\overline{V}$.

It follows from \eqref{ie:jet estimate-u-and-h no rho} that
\[\int_{V}|v_{k,\varepsilon,t}|^2 e^{-\phi-\psi}d\lambda_n \leq
C_3\]
for some positive number $C_3$ independent of $t$ when
$t$ is small enough, where $d\lambda_n$ is the $n$-dimensional Lebesgue measure on $V$.

Since $\bar\partial v_{k,\varepsilon,t}=0$ on $V$ by
\eqref{e:jet dbar on X_k limit-1}, applying Lemma \ref{l:Hormander-dbar} to
the $(n,1)$-form
\[\sqrt{2\delta(t)}v_{k,\varepsilon,t}\in
L^2_{(n,1)}(V,e^{-\phi-\psi}),\] we get an $(n,0)$-form
$w_{k,\varepsilon,t}$ such
that $\bar\partial w_{k,\varepsilon,t}
=\sqrt{2\delta(t)}v_{k,\varepsilon,t}$ on $V$
and
\begin{equation}\label{ie:estimate v1}
\int_{V}|w_{k,\varepsilon,t}|^2 e^{-\phi-\psi}d\lambda_n\leq
C_4\delta(t)
\end{equation}
for some positive number $C_4$ independent of $t$.

Hence
\begin{equation}\label{ie:estimate v2}
\int_{V}|w_{k,\varepsilon,t}|^2 d\lambda_n \leq
C_5\delta(t)
\end{equation}
for some positive number $C_5$ independent of $t$.

Now define $G_{k,\varepsilon,t}=-u_{k,\varepsilon,t}-w_{k,\varepsilon,t}+(1-\sigma_{\varepsilon,t}'(\psi))\tilde{f}$ on $V$. Then
\[G_{k,\varepsilon,t}=F_{k,\varepsilon,t}
-w_{k,\varepsilon,t}\quad\text{and}\quad\bar\partial
G_{k,\varepsilon,t}=0.\]
Hence $G_{k,\varepsilon,t}$ is
holomorphic on $V$. Furthermore, we get from \eqref{ie:jet F estimate-2} and
\eqref{ie:estimate v2} that
\begin{equation}\label{ie:G-estimate}
\int_{V}|G_{k,\varepsilon,t}|^2d\lambda_n\leq C_6
\end{equation}
for some positive number $C_6$ independent of $t$.

Since $\sigma_{\varepsilon,t}\geq t$ and $R$ is decreasing near $-\infty$, we can obtain that $R(\sigma_{\varepsilon,t}(\psi))\leq R(t)$ on $X_k$ when $t$ is small enough. Then we obtain from \eqref{ie:jet estimate-u-and-h no rho} that
\begin{equation*}
\int_{V}|u_{k,\varepsilon,t}|^2 e^{-\phi-\psi}d\lambda_n\leq
C_7R(t)
\end{equation*}
for some positive number $C_7$ independent of $t$.

Therefore, we get
\[\int_{V}|u_{k,\varepsilon,t}
+w_{k,\varepsilon,t}|^2e^{-\phi-\psi}d\lambda_n\leq
2C_7R(t)+2C_4\delta(t).\]

Since
\[G_{k,\varepsilon,t}-f_i=-u_{k,\varepsilon,t}-w_{k,\varepsilon,t}+(1-\sigma_{\varepsilon,t}'(\psi))\sum_{j\in I}\xi_j(f_j-f_i)-\sigma_{\varepsilon,t}'(\psi)f_i\]
and $\sigma_{\varepsilon,t}'(\psi)=0$ on $V\cap\{\psi<t\}$, we have
\begin{equation}\label{e:G multiplier}
G_{k,\varepsilon,t}-f_i\in\mathcal{I}(he^{-\psi})_x,\quad\forall x\in V.
\end{equation}

Since $w_{k,\varepsilon,t_j}\rightarrow 0$ in $L^2$ by \eqref{ie:estimate v2} and
$F_{k,\varepsilon,t_j}\rightharpoonup F_{k,\varepsilon}$ weakly in
$L^2$ as $j\rightarrow+\infty$,
we get $G_{k,\varepsilon,t_j}\rightharpoonup F_{k,\varepsilon}$ weakly in
$L^2$ as $j\rightarrow+\infty$. Hence it follows from
\eqref{ie:G-estimate} and routine arguments with applying Montel's
theorem that a subsequence of
$\{G_{k,\varepsilon,t_j}\}_{j=1}^{+\infty}$ converges to
$F_{k,\varepsilon}$ uniformly on compact subsets of $V$. Then it follows from \eqref{ie:G-estimate}, \eqref{e:G multiplier} and Lemma \ref{l:local generators uniform estimate} that
\begin{equation}\label{e:F multiplier}
F_{k,\varepsilon}-f_i\in\mathcal{I}(he^{-\psi})_x
\end{equation}
for any $x\in V$ and thereby for any $x\in U_i\cap X_k$.

Since $\phi$ is locally bounded above and
$\varlimsup\limits_{s\rightarrow-\infty} e^sR(s)<+\infty$, applying Montel's
theorem and extracting weak limits of $\{F_{k,\varepsilon}\}_{k\geq3,k\in\mathbb{Z},\varepsilon\in(0,\frac{1}{4})}$,
first as $\varepsilon\rightarrow 0$, then as $k\rightarrow+\infty$, we obtain from
\eqref{ie:jet F estimate-3}, \eqref{e:F multiplier} and Lemma \ref{l:local generators uniform estimate} a section $F\in H^0(X,\mathcal{O}_X(K_X\otimes L))$ such that
\[\int_X\frac{|F|^2_{\omega,h}}{e^{\psi}R(\psi)} dV_{X,\omega}
\leq \bigg(\frac{1}{\alpha R(\alpha_0)}+C_R\bigg)\int_Y|f|^2_{\omega,h} dV_{X,\omega}[\psi]\]
and
\[F-f_i\in\mathcal{I}(he^{-\psi})_x,\quad\forall x\in U_i\cap X.\]
Hence $F\in H^0\big(X,\mathcal{O}_X(K_X\otimes L)\otimes\mathcal{I}'_{\psi}(h)\big)$, and $F$ maps to $f$ under the morphism $\mathcal{I}'_{\psi}(h)\longrightarrow\mathcal{I}'_{\psi}(h)/\mathcal{I}(he^{-\psi})$.

The last surjectivity statement in the conclusion of Theorem \ref{t:Zhou-Zhu-optimal-jet-extension} follows by replacing the metric $h$ with a new metric $h_1:=he^{-\Phi(\Psi)}$, where $\Psi$ is the smooth psh exhaustion function on $X$ (cf. Subsection \ref{jet-subsection 1}), and $\Phi:\mathbb{R}\longrightarrow[0,+\infty)$ is some smooth increasing convex function. In fact, in order to obtain a global holomorphic extension $F$, the key point in the whole proof above is the existence of a constant $C_0$ (independent of $\rho$, $t$ and $k$) satisfying $\varlimsup\limits_{t\rightarrow-\infty}C(t)\leq C_0$ (cf. \eqref{e:jet limit-C-t}). It is not hard to see that such a constant $C_0$ exists if $\Phi$ increases fast enough.

In conclusion, Theorem \ref{t:Zhou-Zhu-optimal-jet-extension} is proved.

\bigskip




\bibliographystyle{references}
\bibliography{xbib}

\end{document}